\documentclass{amsart}
\usepackage[backref]{hyperref}
\usepackage{amssymb}
\usepackage{fancyhdr}		
\fancyhead[L]{{\footnotesize\textsc{\leftmark}}}
\fancyhead[R]{{\footnotesize\textsc{\rightmark}}}
\setlength{\headheight}{15pt}
\widowpenalty=10000 
\clubpenalty=10000  
\pagestyle{fancy}
\fancyhead[L]{{\footnotesize\textsc{\leftmark}}}
\fancyhead[R]{{\footnotesize\textsc{\rightmark}}}
\setlength{\headheight}{15pt}

\usepackage{color}

\usepackage[T1]{fontenc}
\usepackage[latin1]{inputenc}
\usepackage{url}	
\usepackage{graphicx}	
\usepackage{amssymb}
\usepackage{amsmath}
\usepackage{amsthm}
\usepackage[T1]{fontenc}
\usepackage{ae}




\newtheorem{theo}{Theorem}[section]
\newtheorem{cor}[theo]{Corollary }
\newtheorem{lemm}[theo]{Lemma}

\newtheorem{defi}{Definition}[section]

\newtheorem{prop}[theo]{Proposition}
\newtheorem{Remark}[theo]{Remark}

\def\C{\mathbb{C}}



\def\Eins{\mathsf{Eins}}


\def\PGL{{\sf{PGL}}}
\def\PSL{{\sf{PSL}}}
\def\GL{{\sf{GL}}}
\def\SL{{\sf{SL}}}

\def\SO{{\sf{SO}}}
\def\PSO{{\sf{PSO}}}

\def\Sp{{\sf{Sp}}}
\def\PSp{{\sf{PSp}}}

\def\det{{\sf{det}}}

\def\h{{\mathfrak{h}}}

 \def\QQ{{\mathsf{Q}}}
  \def\Ein{{\mathsf{Eins}}}

\textheight 21cm

\title[]{ON HOMOGENEOUS HOLOMORPHIC CONFORMAL
STRUCTURES}

\date{\today}
\author{M. Belraouti}
\address{Mehdi Belraouti \newline
Facult\'e de Math\'ematiques,\\
USTHB, BP 32, El-Alia,\\
16111 Bab-Ezzouar, Alger (Algeria)}
\email{mbelraouti@usthb.dz}

\author{M. Deffaf}
\address{Mohamed Deffaf \newline
Facult\'e de Math\'ematiques,\\
USTHB, BP 32, El-Alia,\\
16111 Bab-Ezzouar, Alger (Algeria)}
\email{mdeffaf@usthb.dz}

\author{Y. Raffed} 

\address{Yazid raffed \newline
Facult\'e de Math\'ematiques,\\
USTHB, BP 32, El-Alia,\\
16111 Bab-Ezzouar, Alger (Algeria) }
\email{yazidsaid.raffed@usthb.edu.dz}

\author{A. Zeghib}
\address{Abdelghani Zeghib \newline
UMPA, ENS de Lyon, France }
\email{abdelghani.zeghib@ens-lyon.fr}
\begin{document}
\maketitle

\noindent{\bf Abstract.}
We study compact complex  manifolds $M$ admitting a conformal holomorphic Riemannian  structure invariant under the action of a complex semi-simple Lie group $G$. We prove that if  the group $G$ acts transitively and essentially, then $M$ is conformally flat.

\tableofcontents

\section{Introduction}
Throughout this paper, $M$  will denote a compact connected complex manifold of dimension $n$. A holomorphic Riemannian metric $g$ on $M$ is a holomorphic field of non-degenerate complex quadratic forms on $TM$. Locally, it can be written as $\sum g_{ij}(z)dz_{i}dz_{j}$, where $(g_{ij}(z))$ is an invertible symmetric complex matrix depending holomorphically on $z$. It is the complex analogue of a pseudo-Riemannian metric. Unlike the real case, there are only few  compact complex manifolds admitting a holomorphic Riemannian metric. A first natural example is given by the flat standard model $\sum dz^{2}_{i}$ on $\mathbb{C}^{n}$. Since this metric is invariant under translations, any complex torus  admits  a holomorphic Riemannian metric. Actually, up to finite cover, complex torus are the  only compact  Kähler manifolds admitting such structure (see \cite{Kobay}).

Consider a cover $\left\{U_{i}\right\}$ of $M$, along with a holomorphic Riemannian metrics $g_{i}$ on each $U_{i}$ such that $g_{i}=f_{ij}g_{j}$ for some holomorphic map $f_{ij}: U_{i}\cap U_{j}\longrightarrow \mathbb{C}$.  Two such covers  
$(\left\{U_{i}\right\}, g_{i})_{i}$ and $(\left\{V_{j}\right\}, h_{j})_{j}$ on $M$ are said to be conformally equivalent if for every $i,j$, there is a holomorphic map $\phi_{ij}: U_{i}\cap V_{j}\longrightarrow \mathbb{C}$ such that $g_{i}=\phi_{ij}h_{j}$ on  $U_{i}\cap V_{j}$. A conformal holomorphic structure on $M$ is then a conformal class of a cover $(\left\{U_{i}\right\}, g_{i})_{i}$. It is said to be conformally flat if it is locally conformally diffeomorphic to $\mathbb{C}^{n}$. Contrary to the real case,  conformal holomorphic Riemannian structures do not derive necessary from holomorphic Riemannian ones. For instance, the complex projective space $\mathbb{CP}^{1}$ admits a conformal holomorphic Riemannian structure but has no  holomorphic Riemannian metric. Another example is provided by the Einstein complex space $\Eins_{n}(\C)$ (see Example \ref{e1} below). Indeed the Fubini-Study metric induces a Kähler metric on $\Eins_{n}(\C)$ (See \cite[Example  10.6]{Kobanomizu}). Thus by \cite{Kobay} (see also \cite{ZeghiSorin}, \cite{Sorindumi}) it does not admit a holomorphic Riemannian metric.

Let $G$ be a Lie group  acting on $M$ by preserving some conformal holomorphic Riemannian structure. The action is said to be essential if $G$ does not preserve any real Riemannian metric on $M$.  This paper aims to classify pairs $\left( M,G\right)$ where $G$ is a complex semi-simple Lie group acting essentially and transitively on $M$. Before going any further, let us start by giving some examples of such pairs.

\subsection{Constructions} 
\label{const}

\subsubsection{The Complex Einstein Universe $\Eins_{n}(\C)$}
\label{e1}
On $\mathbb{C}^{n+2}$, consider the standard holomorphic Riemannian metric $q=dz_{0}^{2}+...+dz_{n+1}^{2}$ and let $\mathsf{Co}_{n+1}(\C)=\lbrace z\in\mathbb{C}^{n+2}: q(z,z)=0 \rbrace$ 
be its light-cone. The complex quadric    $\QQ_n (\C) = (\mathsf{Co}_{n+1} - \{0\})/\C^* \subset \C P^{n+1}$  is the  projectivization of the light-cone \cite[Example  10.6]{Kobanomizu}. The geometry of complex quadrics was amply studied in the litterature in \cite{GasquiGoldschmidt}, \cite{Emilio}, \cite{kups1627}, \cite{Gasquihuber}, \cite{Klein}.\\
The induced metric on $\mathsf{Co}_{n+1}$ is degenerate with kernel the tangent space of  $\C^*$-orbits. It follows that the metric becomes non-degenerate on $\QQ_n(\C)$, but it is defined up to a constant.  Therefore, a holomorphic conformal structure is well defined on $\QQ_n(\C)$. The group $\PSO(n+2, \C)$, which acts transitively on $\QQ_n(\C)$, preserves naturally this  holomorphic conformal structure.  In fact, it is the unique  holomorphic conformal structure  on $\QQ_n(\C)$  preserved by  $\SO(n+2, \C)$. Moreover, the action of $\PSO(n+2, \C)$ is essential. It is called the complex Einstein universe, and denoted $\Eins_n(\C)$. A conformally flat holomorphic conformal structure is then equivalent to giving a $\left( \PSO(n+2, \C), \Eins_n(\C)\right) $-structure.

The stabilizer (of some point) is a parabolic group $P_1$. In fact, $\PSO(n+2, \C)$ acts transitively on  $Gr^0_k$, the space of isotropic $k$-planes. This requires $k \leq $ the integer part of $n/2 + 1$.  Let $P_k$ the stablizer of this action. The parabolic groups $P_k$ are exactly the maximal parabolic 
subgroups of $\PSO(n+2, \C)$ (maximal to mean that only one root space corresponding to a simple root is not contained in such a subgroup). In our investigation in Section \ref{Parabolic_Case1}, we will in particular see that only $Gr^0_1 = \QQ_n(\C)$ admits a $\PSO(n+2, \C)$-invariant holomorphic
conformal structure.

\subsubsection{$\Sp(2n, \C)$-case}
\label{e2}
 The symplectic group $\Sp(2n, \C)$ preserves a (complex) symplectic form 
 $\omega ((x_1, \ldots, x_{2n}), (y_1, \ldots, y_{2n}) ) = \Sigma_{i=1}^{i=n} x_i y_{n+i} - \Sigma_{i=1}^{i=n} y_i x_{n+i}$. So 
 its diagonal action on $\C^{2n} \times \C^{2n}$ preserves the quadratic form on $\C^{4n}$: 
  $$q ((x_1, \ldots, x_{2n}), (y_1, \ldots, y_{2n}) ) = \Sigma_{i=1}^{i=n} x_i y_{n+i} - \Sigma_{i=1}^{i=n} y_i x_{n+i}$$ This determines
  an embedding $\Sp(2n, \C) \to \SO(4n, \C)$.

 \bigskip
  
  Observe   that $\GL(2, \C)$ acts on $\C^{2n} \times \C^{2n}$ by 
$(x, y) \to (a x + b y, c x + d y)$. This action  commutes with the $\Sp(2n, \C)$-action and more generally  with the diagonal action of $\GL(2n, \C)$.
In particular, $\SL(2, \C)$ preserves
the quadratic form $q$, as $q (a x + b y, c x + d y) =  \omega (a x + b y, c x + d y) =
(ad - bc) \omega (x, y) $.

\medskip

Consider now the open simply connected  subset $\mathcal D = \mathcal D_{\Sp(2n, \C)}$ of the quadric  $\QQ_{4n - 2}(\C)$ corresponding to the projectivization of 
the open subset of the $q$-light-cone, 
 $ \{ (x, y)\mid q(x, y) = 0,  \C x \neq \C y \}$.  
The group $\PSp(2n, \C)$ acts transitively and faithfully on it, and we aim to understand its isotropy group, say $Q$.

Let $\mathcal X$ be the space of $\omega$ isotropic 2-planes of $\C^{2n}$. We have a well defined 
$\PSp(2n, \C)$-equivariant map $\pi: \mathcal D  \to \mathcal X$, associating to $(x, y)$ the 2-plane
$\C x \oplus \C y$. The $\pi$-fiber of an $\omega$-isotropic  2-plane $p$ is the set of all its bases
$(b_1, b_2)$, that is $ \C b_1 \oplus \C b_2 = p$. By its true definition, the $\PGL(2, \C)$-action preserves the $\pi$-fibres. If fact $\pi$ is a $\PGL(2, \C)$-principal fibration. In particular, $\PGL(2, \C)$ acts properly and freely on $\mathcal D$.

Let $p = \C e_1 \oplus \C e_{n+1}\in \mathcal X$ where $(e_i)$ is the canonical basis of $\C^{2n}$. Its stabilizer 
 $Q^\prime$ in $\PSp(2n, \C)$ 
preserves the fiber $\mathcal Y= \pi^{-1}(p)$ and acts transitively on it, since the $\PSp(2n, \C)$-action on $\mathcal D$ is transitive and commutes with $\pi$. So on $\mathcal Y$, we have two commuting transitive actions of $Q^\prime$ and $\PGL(2, \C)$.  But, $\mathcal Y$ itself is identified with
$\PGL(2, \C)$, acting on itself on the left (since this action is  free and  transitive). It follows that $Q^\prime$ acts 
on the right on $\mathcal Y$ via a homomorphism $Q^\prime \to \PGL(2, \C)$. Since $\PGL(2, \C)$ is semi-simple, this homomorphism splits, up 
to finite index, and thus, up to finite index $Q^\prime = \PGL(2, \C) \ltimes Q $, where $Q$ is the kernel of  $Q^\prime \to \PGL(2, \C)$.

Clearly  $Q$ acts trivially on $\mathcal Y$.  
In fact $Q$ is the stabilizer 
for  the $\PSp(2n, \C)$-action on $\mathcal D$ of any point of the fiber $\mathcal Y$.  Therefore, $\mathcal D$ as a homogeneous space
can be identified to $\PSp(2n, \C)/Q$.

Since $\mathcal X$ is compact, $Q^\prime$ is a parabolic subgroup of $\PSp(2n, \C)$, and in particular the normalizer of 
$Q$ is parabolic. To finish,  take  $H$  to be  a semi-direct product 
$\Gamma \ltimes Q$, where $\Gamma$ is a co-compact lattice in $\PGL(2, \C)$. Then $H \subset Q^\prime$ with identity component $H^0 = Q$, $M_{1}=\PSp(2n, \C)/H$ is compact and covered by $\mathcal D = \PSp(2n, \C)/ Q$. 
  
\subsubsection{$\SL(n, \C)$-case}
\label{e3}
Given an $n$-dimensional complex vector space $E$. The diagonal action of $\GL(E)$  on $E \times E^*$
preserves the quadratic form $q(x, f) = f(x)$. In addition, the $\PSL(E)$-action is transitive and faithful 
on $\QQ(E \times E^*)$, the projectivization of $\{ (x, f) \mid  f(x) = 0, (x,f) \neq (0,0)\}$. 
 
 Let $Q$ be the stabilizer of a point in the open simply connected  subset  $\mathcal D_{\SL(n, \C)}$ of the quadric $\QQ(E \times E^*)$ corresponding to the projectivization of 
the open subset of the $q$-light-cone, 
 $ \{ (x, f)\mid f(x) = 0,  x\neq 0, f\neq 0 \}$. 
 It has codimension 1 in its normalizer $P$.  To see this, let $e_1, \ldots, e_n$ be a basis of $E$ and $e_1^*, \ldots, e_n^*$
 its dual bases. Consider $p$ the point in the projective space corresponding to $(e_1, e_n^*) \in \mathcal D$.  Its stablizer $Q$ consists 
 of matrices of the form
 $ \begin{pmatrix}  \lambda &  u^{t} & v \\
 0 &  D & C \\
 0 & 0 &  \frac{1}{\lambda}
 \end{pmatrix}
 $, where $u$  is a vector of dimension $n-2$,  $\lambda, v$ are scalars, $D$ is a $(n-2) \times (n-2)$-matrix, and $C$ is a vector of dimension $n-2$, such that $  \det D = 1$. Its normaliser $Q^\prime$
 consists of matrices of the form  $ \begin{pmatrix}  \lambda &  u^{t} & v \\
 0 &  D & C \\
 0 & 0 &  \lambda^\prime
 \end{pmatrix}
 $, with $\lambda (\det D) \lambda^\prime = 1$.  This is the stablizer of the flag $(\C e_1, \C e_1 \oplus \ldots  \oplus \C e_{n-1})$ and hence is parabolic.
 The quotient group $Q^\prime / Q$ has  dimension 1. More precisely,  
 up to a finite index, $Q^\prime $ is a semi-direct product $L \ltimes Q$, where $L \cong \C^*$ is represented as
 matrices of the form $ \begin{pmatrix}  \alpha &  0 & 0 \\
 0 &  \alpha^{-2} &  0\\
 0 & 0 &  \alpha 
 \end{pmatrix}
 $. If $\Gamma$ is a lattice in $\C^{*}$, then, $H = \Gamma \ltimes Q$ yields a compact quotient $M_{2}=\PSL(n, \C) / H$ covered by $\mathcal D =\PSL(n, \C) / Q$.

 \begin{Remark}[Uniqueness] Although we will not need it, let us observe that 
 in both cases, the invariant domains $\mathcal D$ are unique. More precisely, there are unique (irreducible) representations 
$ \Sp(2n, \C) \to \SO(4n, \C)$,  and $\SL(n, \C) \to \SO(2n, \C)$.  Both have a unique dense invariant domain $\mathcal D_{\Sp(2n)}$ (resp. $\mathcal D_{\SL_n}$). 
\end{Remark}
\subsection{Rigidity, main result}
D'Ambra and Gromov conjectured in \cite{Gromov} that compact pseudo-Riemmannian conformal manifolds with an essential action of the conformal group are conformally flat. This conjecture, often known as the pseudo-Riemannian Lichnerowicz conjecture, was  later disproved by Frances in \cite{Francesun}. Additionally, this conjecture has been studied under signature restiction, in the works of Zimmer, Bader, Nevo,  Frances, Zeghib,  Melnick and Pecastaing (see \cite{Zimmerun}, \cite{Bader}, \cite{Francesquatre}, \cite{Pecastaingun}, \cite{Pecastaingdeux}, \cite{Vincenttrois}, \cite{Vincentquatre}). The present paper is the second in a series, exploring the Lichnerowicz conjecture in the  homogeneous context. In \cite{BDRZ1} we provided a positive affirmation of the conjecture when  the non compact semi-simple component of the conformal group is  the Möbius group. This article deals with the homogeneous Lichnerowicz conjecture in the complex (or real split) cases. More precisely,  we will show that the  examples constructed in Section \ref{const} are essentially the only ones:

\begin{theo}
\label{theo1}
Let $M$ be a compact connected complex manifold  endowed with a faithful conformal holomorphic Riemannian  structure   invariant under an essential and transitive  action of a complex semi-simple Lie group $G$.  Then $M$ is conformally flat. Furthermore:

-  If $M$ is simply connected, then, we have one of the following situations: 
\begin{enumerate}
\item $G=\PSO(n+2,\C)$ and $M=\Ein_{n}(\C)$ with $n\geq 1$   (in particular for $n = 1$,  $G=\PSL(2,\C)$ and $M=\mathbb{CP}^{1}$,  and for $n = 2$, 
$G=\PSL(2,\C)\times \PSL(2,\C)$ and $M= \mathbb{CP}^{1}\times \mathbb{CP}^{1}$)
or;
\item $G $ is the exceptional group $G_{2}$ and $M=\Ein_{5}(\C)$

\end{enumerate}

- If $M$ is not simply connected, then it   fits into one of the examples above in Section \ref{const}.  In particular: 
\begin{enumerate}
 
\item $G=\PSp(2n,\C)$ and $M$ is a quotient of a $\PSp(2n, \C)$-homogeneous open subset in $\Eins_{2n-2}(\C)$ ($n \geq 3$).
The fundamental group $\pi_1(M)$ is a co-compact lattice in $\PGL(2, \C)$ (i.e.   the fundamental group of a closed hyperbolic 3-manifold).

\item $G=\PSL(n,\C)$ and $M$  a quotient of a $\PSL(n, \C)$-homogeneous open subset in $\Eins_{2n-2}(\C)$ ($n \geq 3$).  
The fundamental group $\pi_1(M)$ is infinite cyclic.

\end{enumerate}
\end{theo}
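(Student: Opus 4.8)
The plan is to prove first that $M$ is conformally flat, and then to run a classification via a developing map.

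\emph{Step 1: reformulation of the hypotheses.} Write $M = G/H$ with $H$ the (closed) stabilizer of a point $p$, and let $\rho\colon H\to \mathsf{CO}(q_{0})$ be the isotropy representation on $T_{p}M\cong\C^{n}$, which preserves the conformal class $[q_{0}]$ of a non-degenerate complex quadratic form. A $G$-invariant representative metric on $M$ is the same as an $H$-invariant element of $[q_{0}]$, and the latter exists iff the multiplier character $\chi\colon H\to\C^{*}$ (defined by $\rho(h)\in\chi(h)\,\mathsf{O}(q_{0})$) is trivial. Hence essentiality is equivalent to $\chi\neq 1$, i.e. to $\rho(H)\not\subset\C^{*}\cdot\mathsf{O}(q_{0})$ being violated only by a non-trivial homothety part. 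Since the structure is holomorphic, its conformal Cartan curvature (the Weyl tensor for $n\geq 4$, the Cotton tensor for $n=3$; the cases $n\leq 2$, where $\Eins_{1}=\mathbb{CP}^{1}$ and $\Eins_{2}=\mathbb{CP}^{1}\times\mathbb{CP}^{1}$ are themselves flat, are treated by hand) is a holomorphic $G$-invariant tensor; as $M$ is connected, it vanishes on $M$ as soon as it vanishes at the single point $p$.

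\emph{Step 2: conformal flatness from essential dynamics.} Because $\chi\neq 1$ and $G$ is complex semisimple, $H$ contains a one-parameter subgroup $\{h^{t}\}$ whose linear part $\rho(h^{t})$ is a non-isometric homothety of $(T_{p}M,[q_{0}])$; the associated flow on $M$ fixes $p$ and, lifted to the conformal Cartan bundle, has linearizable contracting–expanding dynamics over the fibre at $p$. Arguing as in the conformal-dynamics approach to the Lichnerowicz conjecture — pulling the holomorphic Weyl/Cotton tensor back by $h^{t}$, using that it is homogeneous of strictly positive weight under the homothety while being $h^{t}$-invariant, and letting $t\to\infty$ — one concludes that the conformal curvature vanishes at $p$, hence everywhere by Step 1. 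Alternatively one may describe the canonical conformal Cartan connection on $G/H$ algebraically and check directly that a non-trivial multiplier forces the curvature $2$-form, an $\mathrm{Ad}(H)$-semi-invariant element of $\Lambda^{2}(\g/\h)^{*}\otimes\g$, to be zero. Either way $M$ is conformally flat; the remaining assertions are a classification under this extra rigidity.

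\emph{Step 3: developing map.} Conformal flatness equips the universal cover $\tilde M$ with a developing map $\mathrm{dev}\colon\tilde M\to\Eins_{n}(\C)$, a local biholomorphism equivariant under a homomorphism $\hat\rho\colon\hat G\to\PSO(n+2,\C)$ from a suitable cover $\hat G$ of $G$; set $G'=\hat\rho(\hat G)$, a complex semisimple subgroup. As $\hat G$ is transitive on $\tilde M$ and $\mathrm{dev}$ is open and equivariant, $\mathrm{dev}(\tilde M)$ is a single open $G'$-orbit $\Omega\subseteq\Eins_{n}(\C)$. If $M$ is simply connected then $\tilde M=M$ is compact, so $\Omega$ is open and closed in the connected $\Eins_{n}(\C)$, whence $\Omega=\Eins_{n}(\C)$ and $\mathrm{dev}$ is a biholomorphism $M\cong\Eins_{n}(\C)$ (a covering of the simply connected quadric); then $G\hookrightarrow\mathrm{Conf}(\Eins_{n}(\C))=\PSO(n+2,\C)$ acts transitively. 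If $M$ is not simply connected, then $\Omega$ is a proper open $G'$-orbit and $M=\tilde M/\pi_{1}(M)$; the deck group $\pi_{1}(M)$ commutes with the lifted $\hat G$-action, so under $\mathrm{dev}$ it maps to elements of $\PSO(n+2,\C)$ commuting with the transitive $G'$-action on $\Omega$, i.e. into the normalizer quotient $N_{G'}(H')/H'$ of the isotropy — exactly the $\PGL(2,\C)$, resp. $\C^{*}$, computed in Section \ref{const} — and cocompactness of $M$ forces $\pi_{1}(M)$ to be a cocompact lattice there.

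\emph{Step 4: the Lie-theoretic classification, and the main difficulty.} It remains to determine all complex semisimple $G'\subset\PSO(n+2,\C)$ that either (a) act transitively on $\Eins_{n}(\C)$, or (b) have an open orbit on $\Eins_{n}(\C)$ carrying a compact quotient. Both reduce to the structure of the parabolic $P'=G'\cap P_{1}$ and the $G'$-module $\g/\mathfrak{p}'$: case (a) is the classification of transitive actions on complex quadrics, giving only $\PSO(n+2,\C)$ itself together with $G_{2}$ on $\Eins_{5}(\C)$ and the accidental low-dimensional cases $\PSL(2,\C)$ on $\mathbb{CP}^{1}$ and $\PSL(2,\C)\times\PSL(2,\C)$ on $\mathbb{CP}^{1}\times\mathbb{CP}^{1}$; case (b) produces precisely the embeddings $\Sp(2n,\C)\to\SO(4n,\C)$ and $\SL(n,\C)\to\SO(2n,\C)$ of Section \ref{const}, once one checks their normalizer quotients are non-compact (so cocompact lattices exist) and that no other semisimple group admits such an open orbit with compact quotient. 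Matching each outcome with Section \ref{const} and recording the isotropy and $\pi_{1}$ gives the stated list. The two delicate points are: in Step 2, making rigorous the passage from ``non-trivial isotropy multiplier'' to ``vanishing conformal curvature'', i.e. controlling the linearization of the conformal Cartan geometry at a homothetic fixed point over $\C$, where genuine contractions must be distinguished from elliptic rotations; and in Step 4, establishing the completeness of the list of semisimple subgroups with an open orbit on a complex quadric, a root-theoretic case analysis which is exactly where the exceptional appearance of $G_{2}$ and the absence of further families are decided.
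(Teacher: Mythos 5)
Your overall strategy inverts the logic of the paper: you want to prove conformal flatness first (Step 2) and only then classify via a developing map, whereas the paper never proves flatness as a standalone consequence of essentiality. It classifies the pair $(\g,\h)$ directly from the algebraic data (distortion $\delta$, pairing of root spaces, the condition that $\a\oplus\g_+$ normalizes $\h$), and flatness then falls out either by identifying $M$ with an explicitly flat model (the $\Sp$ and $\SL$ cases, which are open orbits in a quadric) or, in the parabolic-isotropy case, by invoking a theorem of Frances--Melnick. Your Step 2 is therefore the critical gap. The mechanism you sketch --- an invariant holomorphic Weyl/Cotton tensor of positive conformal weight must vanish at the fixed point of a homothety --- does not go through over $\C$: the linear isotropy lands in $\C^*\cdot\O(n,\C)$, and since $\O(n,\C)$ is \emph{non-compact}, the ``rotational'' part of $\rho(h^t)$ can compensate the scalar multiplier on tensor bundles, so invariance of the curvature tensor under $h^t$ gives no contradiction. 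Nor can you appeal to ``the conformal-dynamics approach to the Lichnerowicz conjecture'' as if it were an available theorem: the paper itself recalls that the D'Ambra--Gromov conjecture is false in general (Frances's counterexamples), and even in the best-behaved case here (Borel subalgebra contained in $\h$) the paper needs a genuine external result on parabolic isotropy to conclude flatness. You flag this as a ``delicate point,'' but flagging it does not close it, and without Step 2 your Steps 3 and 4 have no developing map to work with.

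The second gap is Step 4, where the entire classification --- which complex semisimple groups act transitively on a quadric, and which have an open orbit admitting a compact Clifford--Klein quotient --- is asserted rather than proved. This is precisely the content of Sections \ref{sect3}, \ref{sect4} and \ref{Parabolic_Case} of the paper: a case-by-case elimination over the root systems $A_n,\dots,G_2$ using the distortion $\delta$, the uniqueness of the paired root $\alpha$ (Lemma \ref{Lemm2}), and the minimality of $\delta+\alpha$, which is where $\Sp(2n,\C)$, $\SL(n,\C)$, $\SO(n+2,\C)$ and the exceptional $G_2$ are actually isolated and everything else is excluded. A correct proof must carry out this root-theoretic analysis (or an equivalent one); listing its expected output and noting that it ``is exactly where the absence of further families is decided'' leaves the theorem unproved. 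Your Step 3 (developing map, deck group landing in the normalizer quotient $\PGL(2,\C)$ resp.\ $\C^*$) is a reasonable account of how the non-simply-connected examples arise and is consistent with the constructions of Section \ref{const}, but it is downstream of the two unproved steps.
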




\subsection{Organization of the  article}
The paper is organized as follows: In Section \ref{sect1}, we provide an algebraic formulation of our initial problem using Lie algebra terminology. Section \ref{sect2} delves into a detailed examination of the structure of the isotropy sub-algebra. We will specifically distinguish between three different cases based on the size of the isotropy sub-algebra. Sections \ref{sect3}, \ref{sect4}, and \ref{Parabolic_Case} are dedicated to proving the classification theorem in these distinct cases.
\section{Algebraic formulation}
\label{sect1}
Assume that $M$ is endowed with a conformal holomorphic Riemannian structure $\mathcal{G}$ invariant under the action of a complex semi-simple Lie group $G$. We will assume in addition that $G$ acts  transitively and essentially on $(M,\mathcal{G})$. 

Let $x_{0}$ be a fixed point of $M$ and denote by $H$ its stabilizer in $G$ so that $T_{x_{0}}M$ is identified with $\mathfrak{g}/\mathfrak{h}$. The conformal structure $\mathcal{G}$ defines a conformal class of a non-degenerate complex bilinear symmetric form  $g$ on $\mathfrak{g}/\mathfrak{h}$ which in turn gives rise to a conformal class of a degenerate complex bilinear symmetric form $\left\langle .,.\right\rangle$ on $\mathfrak{g}$ admitting $\mathfrak{h}$ as a kernel. More precisely, the  form $\left\langle .,.\right\rangle$ is defined by $$\left\langle X,Y\right\rangle=g\left(X^{*}(x),Y^{*}(x)\right),$$ where $X^{*}$, $Y^{*}$ are the fundamental vector fields associated to $X$ and $Y$.

Consider $P=\overline{H}^{Zariski}$ the Zariski closure of the isotropy group $H$. It preserves the conformal class of $\left\langle .,.\right\rangle$. More precisely, there is a morphism $\delta:P\longrightarrow \mathbb{C}^{*}$ such that for every $p\in P$ and every $u,v\in \mathfrak{g}$, 
\begin{equation}
\label{equationdeux}
\left\langle\operatorname{Ad}_{p}(u), \operatorname{Ad}_{p}(v)\right\rangle=\delta(p)\left\langle u,v\right\rangle=\left(\operatorname{det}\left(\operatorname{Ad}_{p}\right)_{\vert{\mathfrak{g}/\mathfrak{h}}}\right)^{\frac{2}{n}}\left\langle u,v\right\rangle.
\end{equation}
In particular the group $P$ normalizes $H$.

Differentiating Equation \ref{equationdeux}, we get a linear function, that we continue to denote $\delta$, from $\mathfrak{p}$ the Lie algebra of $P$ to  $\mathbb{C}$ such that for every $p\in \mathfrak{p}$ and every $u,v\in \mathfrak{g}$
\begin{equation}
\label{equationcinq}
\left\langle\operatorname{ad}_{p}(u),v\right\rangle + \left\langle u, \operatorname{ad}_{p}(v)\right\rangle=\delta(a)\left\langle u,v\right\rangle.
\end{equation}

In particular if $p\in  \mathfrak{p}$ preserves the metric then $\delta(p)=0$ and 

\begin{equation}
\label{equationsix}
\left\langle\operatorname{ad}_{p}(u),v\right\rangle + \left\langle u, \operatorname{ad}_{p}(v)\right\rangle=0.
\end{equation}

As $\mathfrak{p}$ is a complex uniform algebraic sub-algebra of the semi-simple algebra $\mathfrak{g}$, there exists a Cartan sub-algebra $\mathfrak{a}$ of $\mathfrak{g}$ together with an ordered root system $\Delta=\Delta^{-}\sqcup \Delta^{+}$ and a root space decomposition $\mathfrak{g}=   \bigoplus_{\alpha\in \Delta^{-}} \mathfrak{g}_{\alpha}   \oplus \mathfrak{g}_{0}\oplus \bigoplus_{\alpha\in \Delta^{+}} \mathfrak{g}_{\alpha}=\mathfrak{g}_{-}\oplus\mathfrak{a}\oplus \mathfrak{g}_{+}$ such that $\mathfrak{a}\oplus \mathfrak{g}_{+} \subset\mathfrak{p}$ \cite[Corrolaire~16.13]{borel}.

\begin{defi}
Two elements $\alpha$, $\beta$ of $\Delta\cup 0$ are said to be paired if $\mathfrak{g}_{\alpha}$ and $\mathfrak{g}_{\beta}$ are not $\left\langle .,.\right\rangle-$orthogonal. 
\end{defi}

Note that because $\left\langle .,.\right\rangle$ is non trivial, there  always exist two paired elements (possibly the same) $\alpha$, $\beta$ of  $\Delta\cup 0$. Any such elements $\alpha$ and $\beta$ verify $\alpha+\beta=\delta$. This shows  that for any element $\alpha$  there is at most one $\beta$ (depending whether $\mathfrak{g}_{\alpha}\subset\mathfrak{h}$ or not) paired with it. Moreover:

\begin{prop}
We have:
\begin{enumerate}
\item $\mathfrak{h}$ is a non trivial ideal of $\mathfrak{p}$;
\item $\mathfrak{p}\subsetneq \mathfrak{g}$;
\item The restriction of $\delta$ to $\mathfrak{a}$ is a non trivial  linear form.
\end{enumerate}
\end{prop}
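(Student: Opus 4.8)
The plan is to extract all three statements from the single structural fact already in hand: the form $\left\langle .,.\right\rangle$ is non-trivial, degenerate with kernel exactly $\mathfrak{h}$, $\mathfrak{p}$ preserves its conformal class via the character $\delta$, and $\mathfrak{a}\oplus\mathfrak{g}_+\subset\mathfrak{p}$. The pairing relation "$\alpha$ paired with $\beta$ $\Rightarrow$ $\alpha+\beta=\delta$'' (which follows from Equation \ref{equationcinq} applied to $\mathfrak{a}$: for $X\in\mathfrak{g}_\alpha$, $Y\in\mathfrak{g}_\beta$ and $t\in\mathfrak{a}$ one gets $(\alpha(t)+\beta(t))\left\langle X,Y\right\rangle=\delta(t)\left\langle X,Y\right\rangle$) is the engine of the argument, so I would first record it cleanly and then read off consequences.

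First I would prove (3), the non-triviality of $\delta|_{\mathfrak{a}}$, since the other two rest on it. Suppose for contradiction $\delta|_{\mathfrak{a}}\equiv 0$. Pick any paired pair $\alpha,\beta\in\Delta\cup\{0\}$ (these exist because $\left\langle .,.\right\rangle\neq 0$); then $\alpha+\beta=\delta$ vanishes on $\mathfrak{a}$, hence $\alpha=-\beta$ as linear forms on $\mathfrak{a}$, i.e. $\beta=-\alpha$. This already forces a specific shape, but more importantly: since $\mathfrak{a}\oplus\mathfrak{g}_+\subset\mathfrak{p}$, Equation \ref{equationcinq} holds for all $p\in\mathfrak{a}\oplus\mathfrak{g}_+$ with $\delta$ vanishing on $\mathfrak{a}$. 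I would then show the induced form on $\mathfrak{g}/\mathfrak{h}$ would be invariant under a group that, together with $G$ semi-simple, yields a real (in fact complex) invariant metric — contradicting essentiality. Concretely: if $\delta\equiv 0$ on all of $\mathfrak{p}$ (which is what $\delta|_\mathfrak{a}\equiv 0$ will be leveraged to give, using that $\delta$ is a character of $\mathfrak{p}$ and $\mathfrak{p}$ is generated by $\mathfrak{a}\oplus\mathfrak{g}_+$ together with the part of $\mathfrak{g}_0$ it contains, while $[\mathfrak{g}_+,\mathfrak{g}_+]$-type brackets and semisimplicity kill the $\mathfrak{g}_0$-contribution), then $P$ acts on $\mathfrak{g}/\mathfrak{h}$ preserving $g$ outright, so $G/H$ carries a $G$-invariant holomorphic Riemannian metric, whose real part is a $G$-invariant pseudo-Riemannian metric; averaging over a maximal compact of $G$ and using that $G$ is a complex semi-simple group acting on a compact manifold (hence the orbit map constraints force an honest invariant Riemannian metric) contradicts essentiality.

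Next, (1): $\mathfrak{h}$ is an ideal of $\mathfrak{p}$. That $\mathfrak{h}$ is a subalgebra is automatic. For the ideal property, note $P$ normalizes $H$ (stated just after Equation \ref{equationdeux}), so on the Lie-algebra level $[\mathfrak{p},\mathfrak{h}]\subset\mathfrak{h}$ — this is essentially immediate from differentiating the normalization relation, and I would simply cite it. Non-triviality of $\mathfrak{h}$ is where I would use that $\left\langle .,.\right\rangle$ is \emph{degenerate} on $\mathfrak{g}$: its kernel is $\mathfrak{h}$ by construction, and $\left\langle .,.\right\rangle$ cannot be non-degenerate on $\mathfrak{g}$ because $\dim\mathfrak{g}>n=\dim\mathfrak{g}/\mathfrak{h}$ unless $\mathfrak{h}=0$; but $\mathfrak{h}=0$ would mean $G$ acts with discrete (hence, by transitivity and compactness, finite) stabilizer, making $G$ essentially the manifold itself, and a compact complex manifold that is a quotient of a complex semi-simple Lie group — one then checks this is incompatible with admitting an essential conformal structure (for instance, $G$ compact gives an invariant metric; $G$ non-compact complex semi-simple has no compact quotient). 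So $\mathfrak{h}\neq 0$.

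Finally (2), $\mathfrak{p}\subsetneq\mathfrak{g}$: if $\mathfrak{p}=\mathfrak{g}$ then by (1) $\mathfrak{h}$ is an ideal of the semi-simple algebra $\mathfrak{g}$, hence $\mathfrak{g}=\mathfrak{h}\oplus\mathfrak{h}^\perp$ with $\mathfrak{h}^\perp$ also an ideal, and the $G$-action would not be faithful (the normal subgroup integrating $\mathfrak{h}$, or rather its identity component inside the kernel of the action, would be non-discrete) — contradicting faithfulness of the action. Alternatively and more in the spirit of the paper: $\mathfrak{p}=\mathfrak{g}$ forces $\delta$ to be a character of all of $\mathfrak{g}$, hence $\delta\equiv 0$ by semi-simplicity, contradicting (3). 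I expect the genuine obstacle to be packaging the essentiality argument in (3) cleanly — specifically verifying that $\delta|_\mathfrak{a}\equiv 0$ really does upgrade to $\delta\equiv 0$ on $\mathfrak{p}$ and then to an honest invariant Riemannian metric rather than merely a degenerate or complex-bilinear one — so I would isolate that as a lemma and handle the root-space bookkeeping there.
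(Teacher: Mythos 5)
Your skeleton for parts (2) and (3) is essentially the paper's: $\delta$ vanishes on the nilpotent pieces $(\mathfrak{p}\cap\mathfrak{g}_{-})\oplus\mathfrak{g}_{+}$, so $\delta|_{\mathfrak{a}}=0$ would force $\delta\equiv 0$ on all of $\mathfrak{p}$, while $\mathfrak{p}=\mathfrak{g}$ would force the same by semi-simplicity; both situations are then declared incompatible with essentiality. Likewise, the ideal property of $\mathfrak{h}$ via the fact that $P$ normalizes $H$ is exactly what the paper uses.

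There are, however, two genuine gaps. First, the non-triviality of $\mathfrak{h}$: you reduce to ``$\mathfrak{h}=0$ means $M$ is a compact quotient of $G$ by a discrete subgroup'' and then assert that a non-compact complex semi-simple group has no compact quotient. That is false: every semi-simple Lie group admits cocompact lattices (Borel), and e.g.\ $\SL(2,\C)/\Gamma$ is a compact complex manifold carrying the bi-invariant holomorphic Riemannian metric induced by the Killing form, on which $\SL(2,\C)$ acts transitively with discrete isotropy and preserves no Riemannian metric. Excluding precisely this configuration is the content of the external result the paper invokes (\cite{BDRZ1}, Proposition 2.6); it does not follow from the considerations you list. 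Second, your justification that ``$\delta\equiv 0$ on $\mathfrak{p}$ contradicts essentiality'' does not go through: averaging a $G$-invariant pseudo-Riemannian (or holomorphic) metric over a maximal compact subgroup does not yield a $G$-invariant Riemannian metric, and no transitive action of a non-compact group on a compact manifold preserves a Riemannian metric in the first place — so an invariant metric in the conformal class cannot be upgraded to an invariant Riemannian metric this way. The paper takes ``essential $\Leftrightarrow\ \delta\neq 0$'' as the operative translation (again resting on \cite{BDRZ1}); a self-contained proof must establish that translation, not the averaging argument. A smaller point: to get $\delta=0$ on a root space $\mathfrak{g}_{\alpha}\subset\mathfrak{p}$, use either $\mathfrak{g}_{\alpha}=[\mathfrak{a},\mathfrak{g}_{\alpha}]\subset[\mathfrak{p},\mathfrak{p}]$ or the unipotence of $\operatorname{Ad}_{\exp p}$ in Equation \ref{equationdeux}; invoking only ``$[\mathfrak{g}_{+},\mathfrak{g}_{+}]$-type brackets'' misses the simple root spaces.
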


\begin{defi}
The restriction of $\delta$ to $\mathfrak{a}$ is called  \textbf{distortion}.
\end{defi}

\begin{proof}
$1)$ By  \cite[Proposition~2.6]{BDRZ1},  $\mathfrak{h}$ is non trivial.\\
$2)$ Suppose the converse. Since $\mathfrak{h}$ is a non trivial ideal of $\mathfrak{p}$, Equation \ref{equationsix} is verified for every $u,v\in \mathfrak{g}$ and every $p\in \mathfrak{h}$ which contradicts the essentiality of the action. \\
$3)$  Now as $\mathfrak{g}_{-}$ and $\mathfrak{g}_{+}$ are nilpotent sub-algebras, we have that $\delta$ is trivial on $(\mathfrak{p}\cap\mathfrak{g}_{-})\oplus \mathfrak{g}_{+}$. If $\delta$ was trivial on $\mathfrak{a}$ then $\delta$ would be trivial on $\mathfrak{p}=(\mathfrak{p}\cap\mathfrak{g}_{-})\oplus \mathfrak{a} \oplus \mathfrak{g}_{+}$ which clearly contradicts the essentiality hypothesis.\\
\end{proof}

In the rest of this paper we will abandon our original group formulation and instead adopt the following Lie algebra one: 

\begin{itemize}
\item There is a  root space decomposition as above,
\item There is a distortion $\delta: \mathfrak{a} \longrightarrow \mathbb{C}$, 
\item The pairing condition of two weight spaces implies their sum is $\delta$, 
\item The essentiality is translated into the fact that 
$\delta \neq 0$, and the compactness of $G/H$ is replaced by the fact 
that  $\mathfrak{a} \oplus\mathfrak{g}_{+}$ normalizes $\h$.
\end{itemize}
We finish this section by the following useful definition:
\begin{defi}
We say that a subalgebra $\mathfrak{g}'$ is a modification of $\mathfrak{g}$, if $\mathfrak{g}'$ projects surjectively on $\mathfrak{g}/\mathfrak{h}$. Equivalently, $M=G'/(G'\cap H)$, where  $G'$ is the connected subgroup of $G$ associated to $\mathfrak{g}'$.
\end{defi}
\section{Structure of the isotropy sub-algebra: Synthetic study}
\label{sect2}
In this part we will study in detail the structure of the sub-algebra $\mathfrak{h}$. Let us start with the following proposition:

\begin{prop}
\label{Proposition1}
We have:
\begin{enumerate}
\item If $\mathfrak{a}\subset \mathfrak{h}$, then the Borel sub-algebra $\mathfrak{b}=\mathfrak{a}\oplus\mathfrak{g}_{+}$ is contained in $\mathfrak{h}$;
\item If $\mathfrak{a}\nsubseteq \mathfrak{h}$, then $\delta$ is a root paired with $0$. In particular, $\mathfrak{g}_{\delta}$ is not contained in $\mathfrak{p}$. Moreover, the sub-algebra $\mathfrak{a}\cap\mathfrak{h}$ has co-dimension one in $\mathfrak{a}$. 
\end{enumerate}
\end{prop}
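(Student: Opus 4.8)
The two statements are fundamentally about how the distortion $\delta$ interacts with the root-space decomposition and the kernel $\mathfrak h$ of $\langle\cdot,\cdot\rangle$, so the first thing I would do is recall the two structural facts already available: (i) $\mathfrak h$ is the kernel of $\langle\cdot,\cdot\rangle$ and is an ideal of $\mathfrak p\supseteq\mathfrak a\oplus\mathfrak g_+$, and (ii) by the previous proposition $\delta\ne 0$ on $\mathfrak a$, and any two paired weights $\alpha,\beta$ satisfy $\alpha+\beta=\delta$. I would also note that $\mathfrak g_0=\mathfrak a$ here since $\mathfrak a$ is a Cartan subalgebra, so $\mathfrak h\cap\mathfrak a$ and the collection of root spaces $\mathfrak g_\alpha$ lying in $\mathfrak h$ completely determine $\mathfrak h$ (because $\mathfrak h$ is $\mathfrak a$-invariant, being an ideal of $\mathfrak p\ni\mathfrak a$, hence a sum of weight spaces).

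For part (1): assume $\mathfrak a\subset\mathfrak h$. Since $\langle\cdot,\cdot\rangle$ has $\mathfrak h$ as kernel, $\mathfrak a$ is $\langle\cdot,\cdot\rangle$-orthogonal to everything, so $0$ is paired with no weight. Now take any $\alpha\in\Delta^+$; I want $\mathfrak g_\alpha\subset\mathfrak h$. If not, then $\mathfrak g_\alpha$ is paired with some weight $\beta$ with $\alpha+\beta=\delta\in\mathfrak a^*$. I would use Equation (5) with $p\in\mathfrak a$: pairing is controlled by the $\mathfrak a$-action, and the relation $\langle\mathrm{ad}_p u, v\rangle+\langle u,\mathrm{ad}_p v\rangle=\delta(p)\langle u,v\rangle$ forces $\delta(p)=$ (weight of $u$)$(p)+$(weight of $v$)$(p)$ whenever $\langle u,v\rangle\ne0$. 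The key leverage is that $\mathfrak h$ is an \emph{ideal} of $\mathfrak p$ and $\mathfrak g_+\subset\mathfrak p$: so $[\mathfrak g_+,\mathfrak h]\subset\mathfrak h$, and since $\mathfrak a\subset\mathfrak h$ we get $[\mathfrak g_\gamma,\mathfrak a]=\mathfrak g_\gamma\subset\mathfrak h$ for every $\gamma\in\Delta^+$. That is the whole of $\mathfrak g_+$, so $\mathfrak b=\mathfrak a\oplus\mathfrak g_+\subset\mathfrak h$. (This last step is really the one-line heart of (1): $[\mathfrak a,\mathfrak g_\gamma]\subset\mathfrak h$ because $\mathfrak g_\gamma\subset\mathfrak g_+\subset\mathfrak p$ and $\mathfrak h\trianglelefteq\mathfrak p$, while $[\mathfrak a,\mathfrak g_\gamma]=\mathfrak g_\gamma$ since $\gamma\ne0$.)

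For part (2): assume $\mathfrak a\not\subset\mathfrak h$. Then $\mathfrak h\cap\mathfrak a$ is a proper $\mathrm{ad}$-stable subspace of $\mathfrak a$. First I would show $\delta$ is a root: pick $X\in\mathfrak a\setminus\mathfrak h$. Since $\mathfrak h$ is the kernel of $\langle\cdot,\cdot\rangle$, $X\notin\mathfrak h$ means there is $Y$ with $\langle X,Y\rangle\ne0$; decomposing $Y$ into weight components and using $\mathfrak a$-invariance of $\langle\cdot,\cdot\rangle$ (from Eq. (5) applied with $p\in\mathfrak a$), only the $\mathfrak g_0=\mathfrak a$-component of $Y$ can pair nontrivially with $X\in\mathfrak a$ unless... — more precisely, for $\langle\mathfrak g_\alpha,\mathfrak g_0\rangle\ne0$ we need $\alpha+0=\delta$, i.e. $\alpha=\delta$; and for $\langle\mathfrak g_0,\mathfrak g_0\rangle\ne0$ we'd need $\delta=0$, contradicting $\delta\ne0$. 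So $0$ must be paired with $\delta$, hence $\delta\in\Delta$ and $\mathfrak g_\delta\ne0$; moreover $\mathfrak g_\delta\not\subset\mathfrak h$ (it pairs nontrivially with $\mathfrak a$). If $\mathfrak g_\delta\subset\mathfrak p$ then $\delta\in\Delta^+$ and $\mathfrak g_\delta\subset\mathfrak g_+$, but then $\mathfrak g_\delta$ would be orthogonal to $\mathfrak a$? — no, I need to rule this out via: $\mathfrak g_+$ is nilpotent so $\delta$ vanishes on $\mathfrak p\cap\mathfrak g_-$ and on $\mathfrak g_+$ (as in Prop., part 3), and... the cleanest route is: since $\delta$ is nonzero on $\mathfrak a$ and $\delta=\alpha+\beta$ for the pair $(0,\delta)$, the pairing $\langle\mathfrak g_\delta,\mathfrak a\rangle\ne0$ together with $\mathfrak h$ being the kernel shows $\mathfrak g_\delta\not\subset\mathfrak h$; and $\mathfrak g_\delta\subset\mathfrak p$ would make $\mathfrak g_\delta$ act on $\mathfrak h$... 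I would push the contradiction through $\mathfrak h\trianglelefteq\mathfrak p$. Finally, for the codimension-one claim: the restriction of $\langle\cdot,\cdot\rangle$ pairs $\mathfrak a$ (mod $\mathfrak h\cap\mathfrak a$) with $\mathfrak g_\delta$ (mod $\mathfrak h\cap\mathfrak g_\delta=0$), and this pairing is nondegenerate on the quotient since $\mathfrak h$ is exactly the kernel; as $\langle\cdot,\cdot\rangle$ restricted to $\mathfrak a\times\mathfrak g_\delta$ lands in a space governed by a single linear functional, the radical of $X\mapsto\langle X,\cdot\rangle|_{\mathfrak g_\delta}$ on $\mathfrak a$ is exactly $\ker\delta$, which has codimension one — and this radical is precisely $\mathfrak a\cap\mathfrak h$. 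So $\mathfrak a\cap\mathfrak h=\ker(\delta|_{\mathfrak a})$ has codimension one.

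\textbf{Main obstacle.} The delicate point is part (2): carefully justifying that $\mathfrak g_\delta$ cannot be contained in $\mathfrak p$ (equivalently that $\delta\in\Delta^-$), and pinning down that $\mathfrak a\cap\mathfrak h$ is \emph{exactly} $\ker\delta$ rather than merely contained in it. Both require combining three inputs simultaneously — that $\mathfrak h$ is the kernel of $\langle\cdot,\cdot\rangle$, that $\mathfrak h$ is an ideal of $\mathfrak p$ (so a sum of weight spaces and stable under $\mathfrak g_+$), and the weight bookkeeping "$\langle\mathfrak g_\alpha,\mathfrak g_\beta\rangle\ne0\Rightarrow\alpha+\beta=\delta$" — and the bookkeeping has to be done with enough care to see that $0$ is paired with $\delta$ and with \emph{nothing else}, so that the $\mathfrak a$-side of the pairing is entirely encoded by $\delta$.
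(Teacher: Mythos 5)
Your part (1) is the paper's argument verbatim ($\mathfrak{g}_{+}=[\mathfrak{a},\mathfrak{g}_{+}]\subset[\mathfrak{h},\mathfrak{p}]\subset\mathfrak{h}$), and your derivation that $0$ is paired with $\delta$, hence $\delta\in\Delta$, is also the intended one. But part (2) has a genuine gap exactly where you flag it: you never actually prove that $\mathfrak{g}_{\delta}\not\subset\mathfrak{p}$. Saying you ``would push the contradiction through $\mathfrak{h}\trianglelefteq\mathfrak{p}$'' is not enough --- the ideal property alone does not yield a contradiction (to make it work you would need an element of $\mathfrak{a}\cap\mathfrak{h}$ on which $\delta$ is nonzero, which is not yet available at this stage, and which your own later claim would in fact deny). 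The paper's argument is a concrete computation with the infinitesimal invariance identity: take $p\in\mathfrak{g}_{\delta}\cap\mathfrak{p}$ and $u=v\in\mathfrak{a}$ in Equation~(\ref{equationcinq}); since $\delta$ vanishes on the nilpotent part of $\mathfrak{p}$ one gets $2\langle[p,u],u\rangle=0$, and $[p,u]=-\delta(u)p$ gives $\delta(u)\langle p,u\rangle=0$ for all $u\in\mathfrak{a}$; as $\delta|_{\mathfrak{a}}\neq 0$, density of $\{\delta\neq0\}$ forces $\langle\mathfrak{a},\mathfrak{g}_{\delta}\rangle=0$, contradicting the pairing of $0$ with $\delta$. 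This is the missing step, and it cannot be waved away.

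Separately, your closing identification $\mathfrak{a}\cap\mathfrak{h}=\ker(\delta|_{\mathfrak{a}})$ is false. What you actually know is that $\mathfrak{a}\cap\mathfrak{h}$ is the kernel of the linear functional $\ell(X)=\langle X,E_{\delta}\rangle$ on $\mathfrak{a}$ (since $\mathfrak{g}_{\delta}$ is the only weight space paired with $\mathfrak{a}$, and $\dim\mathfrak{g}_{\delta}=1$); there is no reason for $\ell$ to be proportional to $\delta$, and in fact it is not: immediately after this proposition the paper shows $\langle H_{\delta},\mathfrak{g}_{\delta}\rangle=0$, i.e.\ $H_{\delta}\in\mathfrak{a}\cap\mathfrak{h}$, whereas $\delta(H_{\delta})=|\delta|^{2}\neq0$; likewise Lemma~\ref{Lemm2} identifies $\mathfrak{a}\cap\mathfrak{h}$ with $H_{\alpha}^{\perp}$ for a root $\alpha$ orthogonal to $\delta$. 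The codimension-one conclusion survives (it only needs $\ell\neq0$, which holds because $0$ is paired with $\delta$), but the explicit identification with $\ker\delta$ would derail the subsequent classification if carried forward.
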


\begin{proof}
Suppose first that $\mathfrak{a}\subset \mathfrak{h}$. Then $\mathfrak{g}_{+}=\left[\mathfrak{a},\mathfrak{g}_{+}\right]\subset \left[\mathfrak{h},\mathfrak{p}\right]\subset \mathfrak{h}$. This implies that the Borel sub-algebra $\mathfrak{b}=\mathfrak{a}\oplus\mathfrak{g}_{+}\subset \mathfrak{h}$.

If on the contrary, $\mathfrak{a}$ is not contained in $\mathfrak{h}$ then $0$ is paired with $\delta$ and hence $\delta$ is a root. Let $p\in \mathfrak{g}_{\delta}\cap \mathfrak{p}$ and $u=v$ in $\mathfrak{a}$. Substituting this into Equation \ref{equationcinq}, we obtain $\delta(u)\left\langle p,u\right\rangle=0$ for every $p\in \mathfrak{g}_{\delta}$ and $u\in \mathfrak{a}$. Thus by density we get $\left\langle\mathfrak{a}, \mathfrak{g}_{\delta}\right\rangle=0$ which contradicts the fact that $\delta$ is paired with $0$. So $\mathfrak{g}_{\delta}\cap \mathfrak{p}=\emptyset$

As $\mathfrak{g}_{\delta}$ is of dimension one and $\mathfrak{h}$ is the kernel of $\left\langle .,.\right\rangle$  we get that $\mathfrak{a}\cap\mathfrak{h}$ is of codimension one in $\mathfrak{a}$.
\end{proof}

\subsection{Case one: $\mathfrak{a}\nsubseteq \mathfrak{h}$} Then:
\begin{prop}
\label{Prop1}
Up to modification $\mathfrak{g}$ is simple.
\end{prop}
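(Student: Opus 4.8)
\textbf{Proof plan for Proposition \ref{Prop1}.}
The plan is to work entirely on the Lie-algebra side with the data $(\mathfrak{g},\mathfrak{a},\Delta,\delta,\langle\cdot,\cdot\rangle,\mathfrak{h})$ set up in Section \ref{sect1}, and in the case $\mathfrak{a}\nsubseteq\mathfrak{h}$ exploit the strong constraint coming from Proposition \ref{Proposition1}(2): $\delta$ is a nonzero root, $0$ is paired with $\delta$, and $\mathfrak{g}_\delta\cap\mathfrak{p}=\emptyset$, so $\mathfrak{g}_\delta\subset\mathfrak{g}_-$ and $\mathfrak{a}\cap\mathfrak{h}$ has codimension one in $\mathfrak{a}$. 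Write $\mathfrak{g}=\mathfrak{g}^{(1)}\oplus\cdots\oplus\mathfrak{g}^{(r)}$ as a sum of simple ideals, with corresponding Cartan pieces $\mathfrak{a}^{(i)}$ and root subsystems $\Delta^{(i)}$; each root is supported on a single factor. Since $\delta\in\Delta^{(i_0)}$ for a unique $i_0$, I want to show that, up to modification, all factors other than $\mathfrak{g}^{(i_0)}$ can be absorbed into $\mathfrak{h}$, i.e.\ the relevant geometry only sees $\mathfrak{g}^{(i_0)}$.

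First I would analyze where the form $\langle\cdot,\cdot\rangle$ can be nonzero. By the pairing rule, $\mathfrak{g}_\alpha$ and $\mathfrak{g}_\beta$ pair only if $\alpha+\beta=\delta$; in particular any $\alpha$ paired with something lies in $(\delta-\Delta\cup 0)\cap(\Delta\cup 0)$, and since $\delta$ is supported on factor $i_0$, both $\alpha$ and $\delta-\alpha$ must be supported on factor $i_0$ as well (a root supported on $\mathfrak{g}^{(i)}$, $i\neq i_0$, added to one supported on $\mathfrak{g}^{(i_0)}$ is never a root, being supported on two factors). Likewise $0$ is paired with $\delta$ only through the $\mathfrak{a}$-component, and $\mathfrak{a}=\bigoplus\mathfrak{a}^{(i)}$ with $\delta$ vanishing on $\mathfrak{a}^{(i)}$ for $i\neq i_0$. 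Hence $\langle X,Y\rangle=0$ whenever $X$ or $Y$ lies in $\mathfrak{g}^{(i)}$ for some $i\neq i_0$ — so $\bigoplus_{i\neq i_0}\mathfrak{g}^{(i)}\subset\mathfrak{h}$, because $\mathfrak{h}$ is exactly the kernel of $\langle\cdot,\cdot\rangle$. This is the crux.

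It then remains to convert ``$\bigoplus_{i\neq i_0}\mathfrak{g}^{(i)}\subset\mathfrak{h}$'' into the statement ``up to modification $\mathfrak{g}$ is simple.'' Here I take $\mathfrak{g}'=\mathfrak{g}^{(i_0)}$. I must check $\mathfrak{g}'$ projects onto $\mathfrak{g}/\mathfrak{h}$: since the other simple ideals lie in $\mathfrak{h}$, we have $\mathfrak{g}=\mathfrak{g}^{(i_0)}+\mathfrak{h}$, which is precisely surjectivity of $\mathfrak{g}^{(i_0)}\to\mathfrak{g}/\mathfrak{h}$. Thus $\mathfrak{g}'=\mathfrak{g}^{(i_0)}$ is a modification of $\mathfrak{g}$ in the sense of the definition ending Section \ref{sect1}, it is simple, and the induced data $(\mathfrak{a}^{(i_0)},\Delta^{(i_0)},\delta,\langle\cdot,\cdot\rangle|_{\mathfrak{g}^{(i_0)}},\mathfrak{h}\cap\mathfrak{g}^{(i_0)})$ still satisfies all the axioms (distortion nonzero because $\delta|_{\mathfrak{a}^{(i_0)}}\neq0$; $\mathfrak{a}^{(i_0)}\oplus\mathfrak{g}^{(i_0)}_+$ normalizes $\mathfrak{h}\cap\mathfrak{g}^{(i_0)}$ since it normalized $\mathfrak{h}$ and preserves the ideal $\mathfrak{g}^{(i_0)}$; essentiality survives because the offending element in $\mathfrak{g}_\delta$ lives in $\mathfrak{g}^{(i_0)}$).

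The step I expect to be the main obstacle is the first one — being fully careful that pairing really forces all of $\bigoplus_{i\neq i_0}\mathfrak{g}^{(i)}$ into the kernel, including the subtle point that the $\mathfrak{a}$-part of the form cannot sneak a contribution from a factor $i\neq i_0$ (one must use that $\delta$ restricted to $\mathfrak{a}^{(i)}$ is zero for $i\neq i_0$, together with Equation \ref{equationcinq} applied with $u\in\mathfrak{a}^{(i)}$, to conclude $\mathfrak{a}^{(i)}$ is $\langle\cdot,\cdot\rangle$-orthogonal to everything). After that, the modification argument is essentially bookkeeping, though one should state clearly that ``up to modification'' means we are free to replace $\mathfrak{g}$ by $\mathfrak{g}'$ without changing $M=G/H$.
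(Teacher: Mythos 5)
Your proposal is correct and follows essentially the same route as the paper: since $\delta$ is a root in the case $\mathfrak{a}\nsubseteq\mathfrak{h}$, it is supported on a single simple ideal, and the pairing rule $\alpha+\beta=\delta$ forces every root space of every other ideal into the kernel $\mathfrak{h}$, so that the distinguished ideal is the desired modification (the paper phrases this as a contradiction from two factors both meeting the complement of $\mathfrak{h}$). One small adjustment: your mechanism for killing $\langle\mathfrak{a}^{(i)},\mathfrak{g}_\delta\rangle$ via Equation \ref{equationcinq} with $u\in\mathfrak{a}^{(i)}$ only produces tautologies ($\mathrm{ad}$ of $\mathfrak{p}$ on $\mathfrak{a}$-vectors paired against $\mathfrak{g}_\delta$ returns $\delta(p)\langle u,v\rangle$ on both sides); instead note that for a root $\gamma$ of $\mathfrak{g}^{(i)}$ both $\mathfrak{g}_{\pm\gamma}$ already lie in the kernel $\mathfrak{h}$, hence $H_\gamma=[E_\gamma,E_{-\gamma}]\in\mathfrak{h}$ and these coroots span $\mathfrak{a}^{(i)}$ (equivalently, apply Equation \ref{equationcinq} with $p=E_\gamma$, $u=E_{-\gamma}$, $v\in\mathfrak{g}_\delta$, using $[E_\gamma,\mathfrak{g}_\delta]=0$).
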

\begin{proof}
Assume that $\mathfrak{g}=\mathfrak{g}_{1}\oplus\mathfrak{g}_{2}$ is the direct sum of a simple Lie algebra $\mathfrak{g}_{1}\nsubseteq \mathfrak{h}$ and a semi-simple Lie algebra $\mathfrak{g}_{2}\nsubseteq \mathfrak{h}$. Thus there exist a root $\alpha$ of $\mathfrak{g}_{1}$ and a root $\beta$  of $\mathfrak{g}_{2}$ such that $\mathfrak{g}_{\alpha}\nsubseteq \mathfrak{h}$ and  $\mathfrak{g}_{\beta}\nsubseteq \mathfrak{h}$. Therefore, $\delta-\alpha$, $\delta-\beta$ are also roots of $\mathfrak{g}$. But the roots of $\mathfrak{g}$ are the disjoint union of the those of  $\mathfrak{g}_{1}$ and $\mathfrak{g}_{2}$. This implies that $\delta$ is a root of both $\mathfrak{g}_{1}$ and $\mathfrak{g}_{2}$ which is a contradiction.
\end{proof}

By \cite[Proposition~2.17]{K}, for every root $\alpha$ there exists an element $H_{\alpha}\in \mathfrak{a}$ such that $\operatorname{B}(H_{\alpha},.)=\alpha$, where here $\operatorname{B}$ is the non degenerate Killing form of $\mathfrak{a}$.

Let  $p\in \mathfrak{g}_{-\delta}$ and choose  $0\neq u\in \mathfrak{g}_{\delta}$ such that $\left[p,u\right]=H_{\delta}$. Applying Equation \ref{equationsix} with $p,u=v$  we obtain $\left\langle H_{\delta}, u\right\rangle=0$ and hence  $\left\langle H_{\delta}, \mathfrak{g}_{\delta}\right\rangle=0$. However, by Proposition \ref{Proposition1}, $\delta$ is a root paired with $0$. Therefore $H_{\delta}\in \mathfrak{a}\cap\mathfrak{h}$. Now we have the following important Lemma:

\begin{lemm}
\label{Lemm2}
Let $\alpha$ be a root, which we will assume to be positive. Then
\begin{enumerate}
\item If $\delta(H_{\alpha})\neq 0$, $\mathfrak{g}_{\alpha}$ is contained in $\mathfrak{h}$;
\item If $\delta(H_{\alpha})= 0$ and $\delta-\alpha$ is a root, $\mathfrak{a}\cap\mathfrak{h}=H_{\alpha}^{\perp}$, where the orthogonality is  with respect to the Killing form $\operatorname{B}$. In particular such $\alpha$ is unique.
\item If $\delta(H_{\alpha})= 0$, $\mathfrak{g}_{-\alpha}\oplus\mathbb{C}H_{\alpha}\oplus \mathfrak{g}_{\alpha}$ preserves $\left\langle .,.\right\rangle$. In particular if $\mathfrak{g}_{\alpha}\subset \mathfrak{h}$ then $\mathfrak{g}_{-\alpha}\oplus\mathbb{C}H_{\alpha}\oplus \mathfrak{g}_{\alpha}\subset \mathfrak{h}$.
\end{enumerate}
\end{lemm}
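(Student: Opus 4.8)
The plan is to prove the three items separately, using repeatedly the structural facts recorded just before the statement: $\mathfrak{a}\oplus\mathfrak{g}_{+}\subset\mathfrak{p}$; $H_{\delta}\in\mathfrak{a}\cap\mathfrak{h}$; and, from the pairing relation $\alpha+\beta=\delta$, that $\langle\mathfrak{g}_{\gamma},\mathfrak{g}_{\mu}\rangle=0$ whenever $\gamma+\mu\neq\delta$ — so that $\mathfrak{h}=\bigoplus_{\gamma}(\mathfrak{h}\cap\mathfrak{g}_{\gamma})$ and $\mathfrak{a}\cap\mathfrak{h}=\{H\in\mathfrak{a}:\langle H,\mathfrak{g}_{\delta}\rangle=0\}$. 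I will also use that $\operatorname{ad}_{E_{\alpha}}$ (for $E_{\alpha}\in\mathfrak{g}_{\alpha}$, $\alpha>0$) and $\operatorname{ad}_{H}$ (for $H\in\mathfrak{a}$ with $\delta(H)=0$) preserve $\langle.,.\rangle$ by Equation~\ref{equationsix}, whereas on all of $\mathfrak{a}$ Equation~\ref{equationcinq} is available; that $\delta(H_{\alpha})=\operatorname{B}(H_{\delta},H_{\alpha})=\alpha(H_{\delta})$ (here $\delta$ is a root by Proposition~\ref{Proposition1}); and that $[\mathfrak{g}_{\alpha},\mathfrak{g}_{\beta}]=\mathfrak{g}_{\alpha+\beta}$ when $\alpha+\beta$ is a root.

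For item $(1)$: since $\alpha>0$ we have $\mathfrak{g}_{\alpha}\subset\mathfrak{g}_{+}\subset\mathfrak{p}$, and as $\mathfrak{h}$ is an ideal of $\mathfrak{p}$ containing $H_{\delta}$ we get $[\mathfrak{g}_{\alpha},H_{\delta}]\subset\mathfrak{h}$. But $[\mathfrak{g}_{\alpha},H_{\delta}]=-\alpha(H_{\delta})\,\mathfrak{g}_{\alpha}=-\delta(H_{\alpha})\,\mathfrak{g}_{\alpha}$, which equals $\mathfrak{g}_{\alpha}$ itself as soon as $\delta(H_{\alpha})\neq 0$; hence $\mathfrak{g}_{\alpha}\subset\mathfrak{h}$. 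Note that here the diagonal action of Cartan elements is what makes the argument work, via the ideal property rather than via Equation~\ref{equationcinq}, which only reproduces the pairing relation.

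For item $(2)$: assume $\delta(H_{\alpha})=0$ and $\delta-\alpha\in\Delta$. Choose $E_{\alpha}\in\mathfrak{g}_{\alpha}$ and, using $[\mathfrak{g}_{\alpha},\mathfrak{g}_{\delta-\alpha}]=\mathfrak{g}_{\delta}$, a vector $v_{0}\in\mathfrak{g}_{\delta-\alpha}$ with $w_{0}:=[E_{\alpha},v_{0}]$ spanning $\mathfrak{g}_{\delta}$. Plugging $p=E_{\alpha}$, $u=H\in\mathfrak{a}$, $v=v_{0}$ into Equation~\ref{equationsix} and using $[E_{\alpha},H]=-\alpha(H)E_{\alpha}$ gives $\langle H,w_{0}\rangle=\alpha(H)\langle E_{\alpha},v_{0}\rangle$ for every $H\in\mathfrak{a}$; the scalar $\langle E_{\alpha},v_{0}\rangle$ cannot be $0$, for otherwise $\langle\mathfrak{a},\mathfrak{g}_{\delta}\rangle=0$, contradicting that $\delta$ is paired with $0$. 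Thus the linear form $\langle\cdot,w_{0}\rangle$ on $\mathfrak{a}$ is a nonzero multiple of $\alpha=\operatorname{B}(H_{\alpha},\cdot)$, so $\mathfrak{a}\cap\mathfrak{h}=\ker\langle\cdot,w_{0}\rangle=H_{\alpha}^{\perp}$. If another positive root $\alpha'$ had the same two properties, $H_{\alpha}^{\perp}=\mathfrak{a}\cap\mathfrak{h}=H_{\alpha'}^{\perp}$ would force $H_{\alpha}$ and $H_{\alpha'}$ proportional, hence $\alpha'=\alpha$ (reduced root system, both positive): this is the uniqueness.

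For item $(3)$: regard $\mathfrak{g}$ as a module over the $\mathfrak{sl}_{2}$-triple $\mathfrak{s}_{\alpha}=\mathfrak{g}_{-\alpha}\oplus\mathbb{C}H_{\alpha}\oplus\mathfrak{g}_{\alpha}$, and $\langle.,.\rangle$ as a vector in the finite-dimensional $\mathfrak{s}_{\alpha}$-module $\mathfrak{g}^{*}\otimes\mathfrak{g}^{*}$, where "$X$ preserves $\langle.,.\rangle$" means "$X\cdot\langle.,.\rangle=0$". When $\delta(H_{\alpha})=0$, both $E_{\alpha}$ (as $\alpha>0$) and $H_{\alpha}$ preserve $\langle.,.\rangle$ by Equation~\ref{equationsix}; hence $\langle.,.\rangle$ is a highest-weight vector of weight $0$ in $\mathfrak{g}^{*}\otimes\mathfrak{g}^{*}$, so it spans a trivial $\mathfrak{sl}_{2}$-submodule and is therefore annihilated by all of $\mathfrak{s}_{\alpha}$ — which is precisely the claim that $\mathfrak{s}_{\alpha}$ preserves $\langle.,.\rangle$. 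Then $\operatorname{ad}_{\mathfrak{s}_{\alpha}}$ preserves $\ker\langle.,.\rangle=\mathfrak{h}$, so $\mathfrak{h}$ is an $\mathfrak{s}_{\alpha}$-submodule of $\mathfrak{g}$; if it contains $\mathfrak{g}_{\alpha}$ it contains the $\mathfrak{s}_{\alpha}$-submodule generated by $\mathfrak{g}_{\alpha}$, which is all of $\mathfrak{s}_{\alpha}$. I expect the only genuinely non-formal point to be this representation-theoretic observation in $(3)$ — that a bilinear form killed by the raising element and the Cartan element of an $\mathfrak{sl}_{2}$ is automatically $\mathfrak{sl}_{2}$-invariant; everything else reduces, once the grading of $\langle.,.\rangle$ and the fact $H_{\delta}\in\mathfrak{h}$ are in hand, to short bracket computations.
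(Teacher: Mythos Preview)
Your proof is correct. Items (1) and (2) follow essentially the same route as the paper: for (1), both use $H_{\delta}\in\mathfrak{h}$ together with the ideal property of $\mathfrak{h}$ in $\mathfrak{p}$; for (2), both plug $p\in\mathfrak{g}_{\alpha}$, $u\in\mathfrak{a}$, $v\in\mathfrak{g}_{\delta-\alpha}$ into Equation~\ref{equationsix}, though you extract the full identity $\langle H,w_{0}\rangle=\alpha(H)\langle E_{\alpha},v_{0}\rangle$ and read off $\mathfrak{a}\cap\mathfrak{h}=\ker\alpha$ directly, whereas the paper only records the inclusion $H_{\alpha}^{\perp}\subset\mathfrak{a}\cap\mathfrak{h}$ and then matches dimensions via Proposition~\ref{Proposition1}. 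Your version is marginally more self-contained and also makes the uniqueness assertion explicit.

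Item (3) is where you genuinely diverge. The paper argues at the group level: since the Borel $\mathbb{C}H_{\alpha}\oplus\mathfrak{g}_{\alpha}$ stabilizes $\langle\cdot,\cdot\rangle$, the $\SL(2,\mathbb{C})$-orbit of $\langle\cdot,\cdot\rangle$ in the space of bilinear forms is compact, hence a point, by appeal to an external lemma from \cite{BDRZ1}. Your argument is the purely algebraic translation: $\langle\cdot,\cdot\rangle$ is a highest-weight vector of weight $0$ in the finite-dimensional $\mathfrak{sl}_{2}$-module $\mathfrak{g}^{*}\otimes\mathfrak{g}^{*}$, hence lies in the trivial isotypic component and is annihilated by the lowering operator as well. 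This is more elementary and avoids the external reference; the paper's phrasing, in exchange, makes the underlying geometric fact (a compact orbit of a linear action is a point) more transparent. For the ``in particular'' clause both arguments amount to the same thing --- $\mathfrak{s}_{\alpha}$ normalizes $\mathfrak{h}=\ker\langle\cdot,\cdot\rangle$, so the $\mathfrak{s}_{\alpha}$-module generated by $\mathfrak{g}_{\alpha}$, namely $\mathfrak{s}_{\alpha}$ itself, lies in $\mathfrak{h}$ --- though you cast this as a submodule statement while the paper invokes the ideal property inside the conformal stabilizer.
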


\begin{proof}
First assume that $\delta(H_{\alpha})\neq 0$. Thus $\mathfrak{g}_{\alpha}=\delta(H_{\alpha})\mathfrak{g}_{\alpha}=\left[H_{\delta},\mathfrak{g}_{\alpha}\right]$. But $\mathfrak{h}$ is an ideal of $\mathfrak{p}$, $H_{\delta}\in \mathfrak{h}$ and $\mathfrak{g}_{\alpha}\subset \mathfrak{p}$. Therefore $\mathfrak{g}_{\alpha}\subset\mathfrak{h}$.

Assume on the contrary that $\delta(H_{\alpha})= 0$ and $\delta-\alpha$ is a root. Take $H\in H_{\alpha}^{\perp}$ so that $\alpha(H)=0$. On the one hand, using Equation \ref{equationsix}, with $p\in \mathfrak{g}_{\alpha}$, $u=H$ and $v\in \mathfrak{g}_{\delta-\alpha}$ gives us $\left\langle H,\left[p,v\right]\right\rangle=0$. However, according to \cite[Corollary~2.35]{K}, $\left[\mathfrak{g}_{\alpha}, \mathfrak{g}_{\delta-\alpha}\right]=\mathfrak{g}_{\delta}$, implying $H\in \mathfrak{a}\cap\mathfrak{h}$. On the other hand, Proposition \ref{Proposition1} tells us that $H_{\alpha}^{\perp}$ and $ \mathfrak{a}\cap\mathfrak{h}$ have the same dimension. Thus   $H_{\alpha}^{\perp}=\mathfrak{a}\cap\mathfrak{h}$.

To finish, assume just that $\delta(H_{\alpha})= 0$. Then $\mathbb{C}H_{\alpha}\oplus \mathfrak{g}_{\alpha}$ preserves $\left\langle .,.\right\rangle$. Thus its orbit  under the action of $\mathfrak{g}_{-\alpha}\oplus\mathbb{C}H_{\alpha}\oplus \mathfrak{g}_{\alpha}\cong \mathfrak{sl}(2,\mathbb{C})$ is compact and hence trivial by \cite[Lemma~2.7]{BDRZ1}. 

If $\mathfrak{g}_{\alpha}\subset \mathfrak{h}$, then since $\mathfrak{h}$ is an ideal of the sub-algebra preserving the conformal class of $\left\langle .,.\right\rangle$, we have  $\mathfrak{g}_{-\alpha}\oplus\mathbb{C}H_{\alpha}\oplus \mathfrak{g}_{\alpha}=\left[\mathfrak{g}_{\alpha},\mathfrak{g}_{-\alpha}\oplus\mathbb{C}H_{\alpha}\oplus \mathfrak{g}_{\alpha}\right]\subset \mathfrak{h}$.
\end{proof}

For every root $\alpha$, let us fix two elements $u_{\alpha}\in\mathfrak{g}_{\alpha} $ and $u_{-\alpha}\in \mathfrak{g}_{-\alpha}$ such that $\left[u_{\alpha},u_{-\alpha}\right]=H_{\alpha}$. Let $\alpha$, $\beta$ two roots such that $\alpha+\beta$ is a root. By \cite[Corollary~2.35]{K} we have that $\left[\mathfrak{g}_{\alpha},\mathfrak{g}_{\beta}\right]=\mathfrak{g}_{\alpha+\beta}$. Therefore, there is a non zero complex number $k_{\alpha,\beta}$ such that  $\left[u_{\alpha},u_{\beta}\right]=k_{\alpha,\beta}u_{\alpha+\beta}$. Now, if $\alpha$ is a  root such that $\mathfrak{g}_{\alpha}\nsubseteq \mathfrak{h}$, then $\delta-\alpha$ is also a root. By assuming $\alpha$ negative if necessary, we use Equation \ref{equationsix}, with $p=u_{-\alpha}$, $u=u_{\alpha}$ and $v=u_{\delta}$ to obtain: $\left\langle u_{\alpha}, u_{\delta-\alpha}\right\rangle=\frac{1}{k_{\alpha,\delta}}\left\langle H_{\alpha}, u_{\delta} \right\rangle$. As a consequence we get the following uniqueness property:

\begin{prop}
\label{proposition5}
The conformal class of $\left\langle .,.\right\rangle$ depends only on $\mathfrak{a}\cap\mathfrak{h}$ and $\mathfrak{g}_{\delta}$.
\end{prop}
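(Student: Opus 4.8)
The claim is that the conformal class of $\left\langle .,.\right\rangle$ is determined by the pair of data $(\mathfrak{a}\cap\mathfrak{h}, \mathfrak{g}_{\delta})$. The natural strategy is to show that, once these two pieces of data are fixed, every pairing $\left\langle \mathfrak{g}_\alpha, \mathfrak{g}_\beta\right\rangle$ is forced up to a single global scalar. The computation just carried out before the statement already does the essential work: for any root $\alpha$ with $\mathfrak{g}_\alpha \nsubseteq \mathfrak{h}$, the paired root is $\delta-\alpha$, and the relation $\left\langle u_\alpha, u_{\delta-\alpha}\right\rangle = \frac{1}{k_{\alpha,\delta}}\left\langle H_\alpha, u_\delta\right\rangle$ expresses the pairing of the two weight spaces $\mathfrak{g}_\alpha$ and $\mathfrak{g}_{\delta-\alpha}$ in terms of $\left\langle H_\alpha, u_\delta\right\rangle$, i.e.\ in terms of the single number $\left\langle \cdot, u_\delta\right\rangle$ restricted to $\mathfrak{a}$.

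First I would make precise which weight spaces can be paired at all: by the pairing lemma, $\mathfrak{g}_\alpha$ is paired with (at most) $\mathfrak{g}_{\delta-\alpha}$, and by Lemma \ref{Lemm2}(1) if $\delta(H_\alpha)\neq 0$ then $\mathfrak{g}_\alpha \subset \mathfrak{h}$, so such a weight space lies in the kernel and contributes nothing. Hence only roots $\alpha$ with $\delta(H_\alpha)=0$ (together with $\alpha=0$ and $\alpha=\delta$) are relevant, and whether $\mathfrak{g}_\alpha \subset \mathfrak{h}$ is, by the kernel description and Lemma \ref{Lemm2}, itself read off from $\mathfrak{a}\cap\mathfrak{h}$ — e.g.\ when $\delta-\alpha$ is a root, $\mathfrak{g}_\alpha\subset\mathfrak{h}$ iff $\mathfrak{a}\cap\mathfrak{h}\neq H_\alpha^{\perp}$. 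So the set of paired pairs $\{\alpha, \delta-\alpha\}$ depends only on $\mathfrak{a}\cap\mathfrak{h}$. Next, I would fix once and for all the Chevalley-type normalization $[u_\alpha, u_{-\alpha}] = H_\alpha$ and the structure constants $k_{\alpha,\beta}$ with $[u_\alpha,u_\beta]=k_{\alpha,\beta}u_{\alpha+\beta}$; these are intrinsic to $\mathfrak{g}$ and the chosen Cartan data, not to $\left\langle .,.\right\rangle$. Then the displayed identity shows $\left\langle u_\alpha, u_{\delta-\alpha}\right\rangle$ is determined by the linear functional $X \mapsto \left\langle X, u_\delta\right\rangle$ on $\mathfrak{a}$. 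It remains to pin down that functional: using Equation \ref{equationsix}/\ref{equationcinq} with elements of $\mathfrak{a}$ and $\mathfrak{g}_\delta$, together with $\mathfrak{g}_\delta \cong \mathbb{C}u_\delta$ being one-dimensional and $\left\langle \mathfrak{a},\mathfrak{g}_\delta\right\rangle \neq 0$ (since $\delta$ is paired with $0$), one sees $X\mapsto \left\langle X, u_\delta\right\rangle$ is, up to scalar, the unique linear form on $\mathfrak{a}$ vanishing on $\mathfrak{a}\cap\mathfrak{h}$ — indeed differentiating or substituting $u=v=X\in\mathfrak{a}$, $p\in\mathfrak{g}_\delta$ into Equation \ref{equationcinq} gives $\delta(X)\left\langle p, X\right\rangle = 0$, forcing the functional to be proportional to $\delta$ itself, hence determined by $\ker\delta \cap \mathfrak{a} = \mathfrak{a}\cap\mathfrak{h}$ up to one scalar.

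Assembling: the choice of this one scalar is exactly the conformal (scaling) ambiguity, and once it is fixed every $\left\langle u_\alpha, u_{\delta-\alpha}\right\rangle$ is determined, while all other pairings vanish because the corresponding weight spaces are $\left\langle .,.\right\rangle$-orthogonal. Therefore two invariant forms $\left\langle .,.\right\rangle$ and $\left\langle .,.\right\rangle'$ on $\mathfrak{g}$ with the same $\mathfrak{a}\cap\mathfrak{h}$ and the same $\mathfrak{g}_\delta$ differ only by the global scalar, i.e.\ they are conformally equivalent.

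\textbf{Main obstacle.} The subtle point is bookkeeping about which $\alpha$ actually index a nonzero pairing — namely checking that the condition ``$\mathfrak{g}_\alpha \nsubseteq \mathfrak{h}$'' (equivalently $\alpha$ paired with $\delta-\alpha$) is genuinely recoverable from $\mathfrak{a}\cap\mathfrak{h}$ alone, using Lemma \ref{Lemm2}, and not secretly depending on extra data about $\mathfrak{h}$. One must also be slightly careful with the edge cases $\alpha=0$ (the pairing $\left\langle \mathfrak{a}, \mathfrak{g}_\delta\right\rangle$) and $2\alpha=\delta$ (a weight space paired with itself), but in each case the same substitution into Equation \ref{equationcinq} controls the value up to the single scalar. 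Everything else is the routine Chevalley-basis computation already displayed.
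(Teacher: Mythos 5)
Your proposal follows essentially the same route as the paper: the displayed identity $\left\langle u_{\alpha}, u_{\delta-\alpha}\right\rangle=\frac{1}{k_{\alpha,\delta}}\left\langle H_{\alpha}, u_{\delta}\right\rangle$ reduces every pairing of weight spaces to the single linear functional $X\mapsto\left\langle X,u_{\delta}\right\rangle$ on $\mathfrak{a}$, which is determined up to one scalar by $\mathfrak{a}\cap\mathfrak{h}$; that scalar is the conformal ambiguity. The bookkeeping you add about which weight spaces can be paired is consistent with what the paper leaves implicit.

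One sub-step of your justification is however wrong, even though the conclusion you draw from it is true and is all you need. You claim that substituting $u=v=X\in\mathfrak{a}$ and $p\in\mathfrak{g}_{\delta}$ into Equation \ref{equationcinq} gives $\delta(X)\left\langle p,X\right\rangle=0$, hence that the functional $X\mapsto\left\langle X,u_{\delta}\right\rangle$ is proportional to $\delta$ and that $\mathfrak{a}\cap\mathfrak{h}=\ker\delta\cap\mathfrak{a}$. This substitution is not legitimate: Equation \ref{equationcinq} only holds for $p\in\mathfrak{p}$, and Proposition \ref{Proposition1} establishes precisely that $\mathfrak{g}_{\delta}\cap\mathfrak{p}=0$, so there is no such $p$. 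The conclusion is also false as stated: the paper shows that $H_{\delta}\in\mathfrak{a}\cap\mathfrak{h}$ while $\delta(H_{\delta})=\left|\delta\right|^{2}\neq 0$, so $\mathfrak{a}\cap\mathfrak{h}$ is not $\ker\delta\cap\mathfrak{a}$; in the setting of Lemma \ref{Lemm2}(2) the functional $\left\langle\cdot,u_{\delta}\right\rangle$ is in fact proportional to the form $\alpha$ dual to $H_{\alpha}$, not to $\delta$. The correct (and immediate) argument, which is what the paper uses, is simply that $\mathfrak{h}$ is by definition the kernel of $\left\langle .,.\right\rangle$, so the functional $\left\langle\cdot,u_{\delta}\right\rangle$ vanishes on $\mathfrak{a}\cap\mathfrak{h}$, which by Proposition \ref{Proposition1} has codimension one in $\mathfrak{a}$; a nonzero linear form with prescribed hyperplane kernel is unique up to scalar. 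With that replacement your argument coincides with the paper's.
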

\subsection{Case two: $\mathfrak{a}\subset \mathfrak{h}$}
In this case $\delta$ is no longer a root, rather a sum of two roots. We have:
\begin{prop}
\label{Prop2}
Up to modification, $\mathfrak{g}$ is: 
\begin{itemize}
\item Simple or;
\item The direct sum of two rank one complex simple Lie algebras.
\end{itemize}

\end{prop}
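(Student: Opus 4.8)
In this case we know from Proposition \ref{Proposition1}(1) that the Borel sub-algebra $\mathfrak{b} = \mathfrak{a} \oplus \mathfrak{g}_+$ is contained in $\mathfrak{h}$, so $\mathfrak{h} \supset \mathfrak{b}$ and $\delta$ is not a root but rather a sum $\delta = \alpha + \beta$ of two roots $\alpha, \beta$ that are paired. The plan is to run the same decomposition argument as in Proposition \ref{Prop1}, but now accounting for the possibility that $\delta$ can split its two summands between two distinct simple factors. Write $\mathfrak{g} = \mathfrak{g}_1 \oplus \cdots \oplus \mathfrak{g}_r$ as the sum of its simple ideals, with compatible root system decomposition $\Delta = \Delta_1 \sqcup \cdots \sqcup \Delta_r$, and replace $\mathfrak{g}$ by the modification obtained by discarding any simple factor $\mathfrak{g}_i$ entirely contained in $\mathfrak{h}$ — such a factor acts trivially on $\mathfrak{g}/\mathfrak{h}$ (up to the contribution to $\mathfrak{h}$), so the remaining factors still project onto $\mathfrak{g}/\mathfrak{h}$; hence we may assume no $\mathfrak{g}_i \subset \mathfrak{h}$.

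First I would show $r \leq 2$. For each $i$ there is a root $\gamma_i \in \Delta_i$ with $\mathfrak{g}_{\gamma_i} \nsubseteq \mathfrak{h}$; since $\mathfrak{g}_{\gamma_i}$ is not $\langle\cdot,\cdot\rangle$-orthogonal to everything, it is paired with some weight space $\mathfrak{g}_{\mu_i}$ with $\gamma_i + \mu_i = \delta$, and $\mu_i \in \Delta \cup \{0\}$. Because $\mathfrak{a} \subset \mathfrak{h}$, the weight $0$ space $\mathfrak{g}_0 = \mathfrak{a}$ lies in the kernel, so $\mu_i$ must be a genuine root, i.e. $\delta - \gamma_i$ is a root, and it lies in some $\Delta_{j(i)}$. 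Since roots of distinct simple factors are never summable, writing $\delta$ in terms of its components in each $\Delta_i$: $\delta$ has nonzero component only in $\Delta_i$ and in $\Delta_{j(i)}$. Running this over all $i$ with $\mathfrak{g}_i \nsubseteq \mathfrak{h}$ forces all these indices to lie in a set of size at most $2$; combined with $r$ being exactly the number of such factors, we get $r \leq 2$, which yields the first bullet ($r=1$, $\mathfrak{g}$ simple) or the two-factor case $\mathfrak{g} = \mathfrak{g}_1 \oplus \mathfrak{g}_2$.

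In the two-factor case it remains to prove each $\mathfrak{g}_i$ has rank one. Suppose $\mathfrak{g} = \mathfrak{g}_1 \oplus \mathfrak{g}_2$ with $\delta = \delta_1 + \delta_2$, $\delta_i \in \Delta_i$ nonzero (both components must be nonzero, else $\delta$ is a root in a single factor, contradicting that $\delta$ is not a root). Pairing $\mathfrak{g}_{\alpha} \nsubseteq \mathfrak{h}$ with $\mathfrak{g}_{\delta - \alpha}$ forces $\delta - \alpha \in \Delta$, and splitting $\alpha = \alpha_1 + \alpha_2$ into components one finds $\alpha_i$ must equal $\delta_i$ whenever it is nonzero; more precisely any root $\alpha$ with $\mathfrak{g}_\alpha \nsubseteq \mathfrak{h}$ must have each component either $0$ or equal to $\delta_i$, so $\alpha \in \{\delta_1, \delta_2, \delta_1 + \delta_2\}$. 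Now I would argue that $\delta_i$ must be the only positive root of $\mathfrak{g}_i$ not in $\mathfrak{h}$, and then invoke the $\mathfrak{sl}_2$-triple argument of Lemma \ref{Lemm2}(3) together with the compact-orbit triviality principle \cite[Lemma~2.7]{BDRZ1}: if $\mathfrak{g}_i$ had rank $\geq 2$ there would be a positive root $\alpha \in \Delta_i$ with $\alpha \neq \delta_i$, then $\mathfrak{g}_\alpha \subset \mathfrak{h}$, and since $\mathfrak{a} \subset \mathfrak{h}$, analysing the action of $\mathfrak{g}_{-\alpha}$ via Equation \ref{equationsix} together with the root-string relations $[\mathfrak{g}_{-\alpha}, \mathfrak{g}_{\delta}] = \mathfrak{g}_{\delta - \alpha}$ (using $\langle \mathfrak{g}_\gamma, \mathfrak{g}_\mu\rangle = 0$ unless $\gamma + \mu = \delta$) one forces $\mathfrak{g}_{\delta_i} \subset \mathfrak{h}$, contradicting $\mathfrak{g}_{\delta_i} \nsubseteq \mathfrak{h}$ (which holds because $\mathfrak{g}_{\delta}$, and hence by symmetry each $\mathfrak{g}_{\delta_i}$, fails to be contained in $\mathfrak{p} \supset \mathfrak{h}$, as in Proposition \ref{Proposition1}). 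Hence each $\mathfrak{g}_i$ has rank $1$, i.e. is isomorphic to $\mathfrak{sl}(2, \mathbb{C})$.

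The main obstacle I anticipate is the last step: cleanly ruling out rank $\geq 2$ in a single factor. The combinatorics of which root spaces lie in $\mathfrak{h}$ is delicate because $\mathfrak{h}$ need not equal $\mathfrak{b}$ — it can contain extra root spaces $\mathfrak{g}_{-\alpha}$ — so one must carefully track, using Equation \ref{equationsix} and the non-degeneracy of the Killing form on $\mathfrak{a}$, exactly which negative root spaces are forced into $\mathfrak{h}$ and derive the contradiction with $\mathfrak{g}_{\delta_i} \nsubseteq \mathfrak{h}$. Getting the bookkeeping of root strings and pairings right, rather than any deep new idea, is where the real work lies.
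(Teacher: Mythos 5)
Your reduction to at most two simple factors is correct and rests on the same mechanism as the paper's proof: a root space $\mathfrak{g}_{\gamma}$ outside $\mathfrak{h}$ must be paired with a genuine root space (not with $\mathfrak{a}$, since $\mathfrak{a}\subset\mathfrak{h}=\ker\langle .,.\rangle$), so $\delta-\gamma$ is a root, and since roots of distinct simple ideals never sum to a root, $\delta$ is supported on at most two factors while having nonzero component in every factor not absorbed by $\mathfrak{h}$. Where you diverge is in the logical architecture. The paper proves the disjunction in the form ``not (sum of two rank-one simples) implies simple'': assuming a splitting $\mathfrak{g}_1\oplus\mathfrak{g}_2$ in which $\mathfrak{g}_2$ contributes \emph{two} distinct roots outside $\mathfrak{h}$, it produces three roots $\alpha,\beta,\gamma$ whose $\delta$-complements cannot all be roots, and so it never has to verify the rank-one claim directly. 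You instead prove $r\le 2$ head-on and therefore take on the extra obligation of showing that each factor in the two-factor case has rank one --- an obligation that is real for your structure, and exactly the step you leave sketched.

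That step closes much more easily than you anticipate, and without Equation \ref{equationsix}, $\mathfrak{sl}_2$-triples, or the compact-orbit lemma. You have already shown that the only root spaces outside $\mathfrak{h}$ are $\mathfrak{g}_{\delta_1}$ and $\mathfrak{g}_{\delta_2}$ (note that $\delta_1+\delta_2$ is not a root of a direct sum, so your list $\{\delta_1,\delta_2,\delta_1+\delta_2\}$ reduces to $\{\delta_1,\delta_2\}$). Hence $\mathfrak{h}\cap\mathfrak{g}_1$ is a subalgebra of $\mathfrak{g}_1$ containing its Borel --- a parabolic --- of codimension one. If $\operatorname{rank}(\mathfrak{g}_1)\geq 2$, write $-\delta_1=\gamma+\alpha_s$ with $\gamma$ a positive root and $\alpha_s$ simple when $-\delta_1$ is not simple, or take $\alpha_s$ a simple root adjacent to $-\delta_1$ in the Dynkin diagram when it is; in either case $\mathfrak{g}_{\delta_1}$ is the bracket of two root spaces both contained in $\mathfrak{h}$ (by \cite[Corollary~2.35]{K}), whence $\mathfrak{g}_{\delta_1}\subset\mathfrak{h}$, a contradiction. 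With that paragraph added your proof is complete, and it in fact yields slightly more than the paper's, since it pins down $\mathfrak{h}$ explicitly in the two-factor case.
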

\begin{proof}
Assume we are not in the second case. So one can write  $\mathfrak{g}=\mathfrak{g}_{1}\oplus\mathfrak{g}_{2}$ as the direct sum of simple Lie algebra $\mathfrak{g}_{1}\nsubseteq \mathfrak{h}$ and a semi-simple one $\mathfrak{g}_{2}$. Moreover, if  $\mathfrak{g}_{2}\nsubseteq \mathfrak{h}$ then there are a root $\alpha$ of $\mathfrak{g}_{1}$ and two roots $\beta\neq\gamma$  of $\mathfrak{g}_{2}$ such that $\mathfrak{g}_{\alpha}\nsubseteq \mathfrak{h}$, $\mathfrak{g}_{\beta}\nsubseteq \mathfrak{h}$ and $\mathfrak{g}_{\gamma}\nsubseteq \mathfrak{h}$. Consequently, $\delta-\alpha$, $\delta-\beta$ and $\delta-\gamma$ are also roots of $\mathfrak{g}$. However, this is impossible since the roots of $\mathfrak{g}$ are the union of the roots of  $\mathfrak{g}_{1}$ and $\mathfrak{g}_{2}$. Thus $\mathfrak{g}_{2}$ must be in $\mathfrak{h}$ and hence $\mathfrak{g}$ is, up to modification, simple.
\end{proof}

\section{The $\operatorname{Sp}(n,\mathbb{C})$ case}
\label{sect3}

In this part we will prove Theorem \ref{theo1} when $\mathfrak{a}\nsubseteq \mathfrak{h}$ and $\mathfrak{g}_{+}\nsubseteq \mathfrak{h}$. By Proposition \ref{Prop1}, up to modification, the Lie algebra $\mathfrak{g}$ is simple. The root systems associated to a simple complex Lie algebra are well known and classified. They are of $A_{n}$, $B_{n}$, $C_{n}$ and $D_{n}$ types as well as the exceptional ones $E_{6}$, $E_{7}$, $E_{8}$, $F_{4}$ and $G_{2}$. Up to isomorphism, they are described by means of the canonical basis of $\mathbb{R}^{n}$. Detailed descriptions of these root systems, along with their associated canonical simple roots, can be found in \cite{K}. From now and till the end of the paper we will assume, up to isomorphism, that the root system $\Delta$ is a canonical root system endowed with its canonical order. 
The notations and terminology used here follow \cite[Appendix C]{K}.

In this case $\delta$ is a root and  there exists a positive root $\alpha$ such that $\mathfrak{g}_{\alpha}\nsubseteq \mathfrak{h}$. Hence $\delta-\alpha$ is also a root. By Lemma \ref{Lemm2}, we have that $\delta(H_{\alpha})= 0$, $\mathfrak{a}\cap\mathfrak{h}=H_{\alpha}^{\perp}$ and $\alpha$ is unique. We have:
\begin{prop}
\label{prop20}
Let $\beta$ be a positive root different from $\alpha$. If $\beta$ is not orthogonal to $\alpha$ then $\mathfrak{g}_{-\beta}\nsubseteq \mathfrak{h}$.
\end{prop}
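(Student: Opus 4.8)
The plan is to argue by contradiction: assume $\beta$ is a positive root with $\beta \neq \alpha$, not orthogonal to $\alpha$, yet $\mathfrak{g}_{-\beta}\subset \mathfrak{h}$. The strategy is to exploit the interaction between the unique ``non-isotropic'' root $\alpha$ (for which $\mathfrak{g}_\alpha \nsubseteq \mathfrak{h}$, guaranteed by the hypothesis $\mathfrak{g}_+ \nsubseteq \mathfrak{h}$ of this section together with Lemma \ref{Lemm2}) and the element $H_\alpha$ spanning $\mathfrak{a}$ modulo $\mathfrak{a}\cap\mathfrak{h} = H_\alpha^\perp$. First I would record the basic facts already available: $\delta$ is a root, $\delta(H_\alpha) = 0$, $\mathfrak{a}\cap\mathfrak{h} = H_\alpha^\perp$, $\alpha$ is the unique positive root with $\mathfrak{g}_\alpha \nsubseteq \mathfrak{h}$, and $H_\delta \in \mathfrak{a}\cap\mathfrak{h}$.

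The key step is to produce, from $\mathfrak{g}_{-\beta}\subset\mathfrak{h}$ and the ideal property of $\mathfrak{h}$ inside $\mathfrak{p}$, an element of $\mathfrak{h}$ that should not be there. Since $\mathfrak{h}$ is an ideal of $\mathfrak{p}$ and $\mathfrak{a}\oplus\mathfrak{g}_+\subset\mathfrak{p}$, we have $[\mathfrak{g}_{\gamma},\mathfrak{g}_{-\beta}]\subset\mathfrak{h}$ for every positive root $\gamma$ (and also $[\mathfrak{a},\mathfrak{g}_{-\beta}]\subset\mathfrak{h}$). In particular, bracketing $\mathfrak{g}_{-\beta}$ with $\mathfrak{g}_\beta$ — once we check $\mathfrak{g}_\beta\subset\mathfrak{p}$, i.e. $\beta$ positive, which holds — yields $\mathbb{C}H_\beta\subset\mathfrak{h}$, hence $H_\beta\in\mathfrak{a}\cap\mathfrak{h} = H_\alpha^\perp$. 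But $H_\beta\in H_\alpha^\perp$ is equivalent to $\mathrm{B}(H_\alpha,H_\beta) = 0$, i.e. $\alpha(H_\beta) = \beta(H_\alpha) = 0$, which contradicts the assumption that $\beta$ is not orthogonal to $\alpha$. This looks like the whole argument; the one subtlety is whether one may bracket inside $\mathfrak{p}$ freely — $\mathfrak{g}_\beta\subset\mathfrak{p}$ since $\beta\in\Delta^+$, and $\mathfrak{h}$ being an ideal of $\mathfrak{p}$ gives $[\mathfrak{g}_\beta,\mathfrak{g}_{-\beta}]\subset\mathfrak{h}$ because $\mathfrak{g}_{-\beta}\subset\mathfrak{h}$.

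Concretely I would write: pick $u_{-\beta}\in\mathfrak{g}_{-\beta}$ and $u_\beta\in\mathfrak{g}_\beta$ with $[u_\beta,u_{-\beta}] = H_\beta$ (possible by \cite[Corollary~2.35]{K} since $\mathfrak{g}_{-\beta}\subset\mathfrak{h}\subset\mathfrak{p}$ would be false — rather use that $\mathfrak{g}_\beta\subset\mathfrak{p}$ and $u_{-\beta}\in\mathfrak{h}$, an ideal of $\mathfrak{p}$, so $H_\beta = [u_\beta,u_{-\beta}]\in\mathfrak{h}$). Then $H_\beta\in\mathfrak{a}\cap\mathfrak{h} = H_\alpha^\perp$ by Lemma \ref{Lemm2}(2), so $0 = \mathrm{B}(H_\beta,H_\alpha) = \beta(H_\alpha) = \alpha(H_\beta)$, contradicting $\langle\alpha,\beta\rangle\neq 0$.

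The main obstacle — really the only thing requiring care — is to make sure the bracket relation $[\mathfrak{g}_\beta,\mathfrak{g}_{-\beta}] = \mathbb{C}H_\beta$ is used with the correct side in $\mathfrak{h}$: it is $\mathfrak{g}_{-\beta}$ that lies in $\mathfrak{h}$, and $\mathfrak{g}_\beta$ that lies in $\mathfrak{p}$, and we use that $\mathfrak{h}\trianglelefteq\mathfrak{p}$ so the bracket lands in $\mathfrak{h}$. No essentiality, no conformal form, no distortion beyond $\delta(H_\alpha) = 0$ is needed; the statement is purely about the ideal structure recorded in Lemma \ref{Lemm2} and Proposition \ref{Proposition1}. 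One should also note that the positivity normalization ``$\beta$ positive'' in the statement is exactly what places $\mathfrak{g}_\beta$ inside $\mathfrak{p} = \mathfrak{a}\oplus\mathfrak{g}_+\oplus(\mathfrak{p}\cap\mathfrak{g}_-)$, without which the argument would fail.
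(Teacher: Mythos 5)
Your argument is correct and is essentially the paper's proof: both derive $H_{\beta}\in\mathfrak{a}\cap\mathfrak{h}=H_{\alpha}^{\perp}$ from $\mathfrak{g}_{-\beta}\subset\mathfrak{h}$ and read off the contradiction with $\mathrm{B}(H_{\alpha},H_{\beta})\neq 0$. The only (harmless) difference is that you place the bracket $[\mathfrak{g}_{\beta},\mathfrak{g}_{-\beta}]$ in $\mathfrak{h}$ via the ideal property $\mathfrak{h}\trianglelefteq\mathfrak{p}$ with $\mathfrak{g}_{\beta}\subset\mathfrak{p}$, whereas the paper first uses Lemma \ref{Lemm2} and the uniqueness of $\alpha$ to get $\mathfrak{g}_{\beta}\subset\mathfrak{h}$ (this is where $\beta\neq\alpha$ enters for them) and then brackets inside $\mathfrak{h}$.
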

\begin{proof}
Assume that $\mathfrak{g}_{-\beta}\subset \mathfrak{h}$. As $\beta\neq \alpha$ then by Lemma \ref{Lemm2},  $\mathfrak{g}_{\beta}\subset \mathfrak{h}$ and hence $\mathbb{C}H_{\beta}=\left[ \mathfrak{g}_{-\beta}, \mathfrak{g}_{\beta}\right]\subset \mathfrak{a}\cap\mathfrak{h}=H_{\alpha}^{\perp}$. This means that $\beta$ is orthogonal to $\alpha$.
\end{proof}

Now we have the following proposition:
\begin{prop}
Up to the action of the Weyl group, the pairs of roots $(-\delta,\alpha)$ such that: $\alpha$ is orthogonal to $\delta$ and $\delta-\alpha$ is a root are:
\begin{itemize}
\item $B_{n}$: $(-\delta,\alpha)=(e_{1},e_{2})$;
\item $C_{n}$: $(-\delta,\alpha)=(e_{1}+e_{2},e_{1}-e_{2})$;
\item $F_{4}$: $(-\delta,\alpha)=(e_{1},e_{2})$ or $(-\delta,\alpha)=\left(  \frac{1}{2}(e_{1}+e_{2}-e_{3}-e_{4}),\frac{1}{2}(e_{1}+e_{2}+e_{3}+e_{4})\right) $.
\end{itemize}

\end{prop}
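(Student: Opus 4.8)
The plan is to use Proposition~\ref{Prop1} to assume $\g$ simple and then run through Cartan's classification $A_{n},B_{n},C_{n},D_{n},E_{6},E_{7},E_{8},F_{4},G_{2}$, the whole argument being driven by a single length estimate. Transport the Killing form to $\a^{*}$; it is positive definite on the real span of the roots, and write $\|\cdot\|$ for the corresponding norm. Since $\alpha$ is orthogonal to $\delta$ for this (Weyl-invariant) form, expanding gives
\[
\|\delta-\alpha\|^{2}=\|\delta\|^{2}+\|\alpha\|^{2},
\]
and, as $\delta-\alpha$ is assumed to be a root, the right-hand side must be one of the (at most two) squared root lengths of $\Delta$.

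This constraint already disposes of most types. In the simply-laced cases $A_{n},D_{n},E_{6},E_{7},E_{8}$ every root has the same squared length $\ell^{2}$, so the displayed identity would force $\ell^{2}=2\ell^{2}$: impossible, and no admissible pair exists. For $G_{2}$ the squared lengths are $\ell^{2}$ and $3\ell^{2}$, whereas $\|\delta\|^{2}+\|\alpha\|^{2}$ lies in $\{2\ell^{2},4\ell^{2},6\ell^{2}\}$: again impossible. So only the doubly-laced types $B_{n},C_{n},F_{4}$ can occur, and there the only solution of the constraint is $\|\delta\|^{2}=\|\alpha\|^{2}=(\text{short root length})^{2}$ with $\delta-\alpha$ long. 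Thus $\delta$ and $\alpha$ are \emph{short} roots whose difference is a \emph{long} root, and it remains to enumerate such pairs up to the Weyl group in these three cases. I would do this in the coordinate models of \cite[Appendix C]{K}, using that the Weyl group is transitive on short roots (so one normalizes $\delta$ first, then $\alpha$ using the stabilizer of $\delta$).

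For $B_{n}$ the short roots are the $\pm e_{i}$: two of them, $\pm e_{i}$ and $\pm e_{j}$, are orthogonal precisely when $i\neq j$, and then their difference $\pm e_{i}\mp e_{j}$ is automatically a long root; a signed permutation reduces to $-\delta=e_{1}$, $\alpha=e_{2}$, giving the unique pair $(-\delta,\alpha)=(e_{1},e_{2})$. For $C_{n}$ the short roots are the $\pm e_{i}\pm e_{j}$ and the long roots the $\pm 2e_{i}$; for $\delta-\alpha$ to be supported on a single coordinate, $\delta$ and $\alpha$ must share their two-element support, and orthogonality then forces their signs to agree in one of these two coordinates and to disagree in the other, so $\delta-\alpha=\pm 2e_{k}$ is long. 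Reducing by signed permutations gives $(-\delta,\alpha)=(e_{1}+e_{2},\,e_{1}-e_{2})$.

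The case $F_{4}$ carries the bulk of the work, and this is the step I expect to be the real obstacle. Here the short roots are of two kinds: the eight vectors $\pm e_{i}$ and the sixteen vectors $\tfrac12(\pm e_{1}\pm e_{2}\pm e_{3}\pm e_{4})$; the long roots are the $\pm e_{i}\pm e_{j}$. A short root of the first kind and one of the second have inner product $\pm\tfrac12\neq 0$, so they are never orthogonal, and hence $\{\delta,\alpha\}$ consists of two roots of the same kind. If both are of the first kind the discussion is identical to $B_{n}$ and yields $(-\delta,\alpha)=(e_{1},e_{2})$. If both are $\tfrac12(\varepsilon)$ and $\tfrac12(\varepsilon')$, orthogonality is the condition $\sum_{i}\varepsilon_{i}\varepsilon_{i}'=0$, i.e.\ the sign patterns $\varepsilon$ and $\varepsilon'$ agree in exactly two coordinates; there $\delta-\alpha$ vanishes, at the other two coordinates it equals $\pm1$, so $\delta-\alpha=\pm e_{j}\pm e_{k}$ is long, and one checks that every such pair is Weyl-equivalent to $(-\delta,\alpha)=\bigl(\tfrac12(e_{1}+e_{2}-e_{3}-e_{4}),\,\tfrac12(e_{1}+e_{2}+e_{3}+e_{4})\bigr)$. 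This produces the list in the statement. The delicate point is precisely this last Weyl-group reduction for the half-integer family: modulo signed coordinate permutations alone it splits into several cases, so one must use that the Weyl group of $F_{4}$ is strictly larger than the group of signed permutations in order to carry out the orbit count and reduce to the displayed representatives.
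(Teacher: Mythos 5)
Your proof is correct and follows essentially the same route as the paper: the orthogonality identity $|\delta-\alpha|^{2}=|\delta|^{2}+|\alpha|^{2}$ rules out the simply-laced types and $G_{2}$ by comparing with the (at most two) squared root lengths, and the remaining types $B_{n},C_{n},F_{4}$ are handled by direct enumeration up to the Weyl group. The only difference is that you carry out the final enumeration (short $\delta,\alpha$ with long difference, and the Weyl-group normalization) explicitly, whereas the paper leaves that step as an easy verification.
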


\begin{proof}
As $\delta$ is orthogonal to $\alpha$ we have: 
\begin{equation}
\label{equation9}
\left|\delta-\alpha\right|^{2}=\left|\delta\right|^{2}+\left|\alpha\right|^{2}.
\end{equation}

First, assume that our root system is of type $A_{n}$, $D_{n}$, $E_{6}$, $E_{7}$, or $E_{8}$ . In this cases all the roots have the same length. Putting this in Equation \ref{equation9} gives us a contradiction.

Now, if we are in the $G_{2}$ type. Then we have $12$ roots: six of them have length $2$ and the other six  have length $6$. Again, these do not verify Equation \ref{equation9}. 

Finally in all the remaining types ($B_{n}$, $C_{n}$ and $F_{4}$) we can verify easily that such pairs exist. We then use   the action of the Weyl group to conclude.
\end{proof}

We are left with three types of root systems. Namely $B_{n}$, $C_{n}$ and $F_{4}$.
\begin{prop}
The pair $(-\delta,\alpha)$ exists only in the root systems of type $C_{n}$.
\end{prop}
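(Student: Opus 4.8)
The goal is to rule out types $B_n$ and $F_4$, leaving only $C_n$. The strategy is to exploit the parabolic constraint (that $\a \oplus \g_+$ normalizes $\h$, equivalently that $\h$ is an ideal of $\p \supseteq \a \oplus \g_+$) together with the structural results of Lemma \ref{Lemm2} and Proposition \ref{prop20}. The key observation is that in each of the two admissible pairs for $B_n$ and $F_4$, the root $\alpha$ (or $-\delta$) fails to be a \emph{simple} root, or more precisely its position relative to the simple root system forces too many root spaces into $\h$ and then forces $\h$ to be too large, contradicting essentiality — i.e.\ contradicting that $\delta$ restricted to $\a$ is nontrivial, or that $\g_\delta \not\subset \p$.

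\textbf{First step.} I would make precise the dichotomy from Lemma \ref{Lemm2}: for every positive root $\beta$, either $\delta(H_\beta) \neq 0$ and then $\g_\beta \subset \h$, or $\delta(H_\beta) = 0$ and the $\sl(2,\C)$-triple $\g_{-\beta} \oplus \C H_\beta \oplus \g_\beta$ preserves $\langle\,,\,\rangle$; in the latter case, combined with Proposition \ref{prop20}, if $\beta \neq \alpha$ is not orthogonal to $\alpha$ then $\g_{-\beta} \not\subset \h$, hence (by part (3) of Lemma \ref{Lemm2}, contrapositive) $\g_\beta \not\subset \h$ either — wait, I need to be careful: part (3) says if $\g_\beta \subset \h$ then the whole triple is in $\h$. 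So $\g_{-\beta} \not\subset \h$ forces $\g_\beta \not\subset \h$. But then $\delta - \beta$ is a root, and by uniqueness of $\alpha$ (Lemma \ref{Lemm2}(2)) we would need $\beta = \alpha$ unless $\delta(H_\beta)\neq 0$. Since here $\delta(H_\beta) = 0$, this is a contradiction. The upshot: \emph{every positive root $\beta \neq \alpha$ orthogonal to $\delta$ with $\delta - \beta$ a root must equal $\alpha$}, so the set of positive roots $\beta$ with $\delta(H_\beta) = 0$ and $\beta$ not orthogonal to $\alpha$ is essentially $\{\alpha\}$ alone. Conversely, every positive root $\beta \neq \alpha$ not orthogonal to $\alpha$ satisfies $\delta(H_\beta) \neq 0$, hence $\g_\beta \subset \h$.

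\textbf{Main step (the obstacle).} Now I would carry out the explicit root-system bookkeeping in the two remaining candidate types, normalizing as in the previous proposition. In type $C_n$, with $-\delta = e_1 + e_2$ and $\alpha = e_1 - e_2$, one checks directly that $\delta(H_\beta) = 0$ for $\beta = e_1 - e_2$ only (among the relevant roots), the geometry is consistent, and the example $\Sp(2n,\C)$ of Section \ref{e2} realizes it — so nothing to rule out. In type $B_n$, with $-\delta = e_1$, $\alpha = e_2$: the roots not orthogonal to $\alpha = e_2$ include $e_1 \pm e_2$, $e_2$, $e_2 \pm e_j$ ($j \geq 3$), $e_3$... and for each such $\beta \neq \alpha$ one computes $\delta(H_\beta) = -\langle e_1, \beta^\vee\rangle$ (up to normalization) and finds it is $0$ for, e.g., $\beta = e_2 \pm e_j$ and $\beta = e_2$ itself — but also for $\beta = e_3, e_4, \dots$ which \emph{are} orthogonal to $\delta$ and have $\delta - \beta$ a root (since $\delta - e_3 = -e_1 - e_3$... actually $-\delta - e_3 = e_1 - e_3$, a root, so $\delta - e_3 = -(e_1 - e_3)$... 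I need $\delta - \beta = -e_1 - e_3$ to be a root, which it is in $B_n$). This produces a second positive root $\neq \alpha$ orthogonal to $\delta$ with $\delta - \beta$ a root, contradicting the uniqueness of $\alpha$. The same kind of contradiction arises in $F_4$ for both listed pairs: in each case I would exhibit a positive root $\beta \notin \{\alpha\}$, orthogonal to $-\delta$, with $-\delta - \beta$ a root, violating Lemma \ref{Lemm2}(2). Alternatively, and perhaps more cleanly, I would show that in $B_n$ and $F_4$ the forced inclusions $\g_\beta \subset \h$ for all $\beta$ non-orthogonal to $\alpha$ propagate (via $\h$ being an ideal and bracketing, together with part (3) of Lemma \ref{Lemm2}) to show $\a \cap \h$ has codimension $> 1$ in $\a$, contradicting Proposition \ref{Proposition1}(2) / the codimension-one conclusion. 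The delicate point — and the main obstacle — is doing this root-theoretic computation uniformly and correctly, keeping track of which $H_\beta$ are Killing-orthogonal to $H_\alpha$, since that determines membership of $\C H_\beta$ in $\a \cap \h = H_\alpha^\perp$, and then whether the resulting abundance of $\sl(2)$-triples in $\h$ forces $\h \supseteq \a$, contradicting the standing assumption $\a \not\subset \h$ of this case.

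\textbf{Conclusion.} Having eliminated $A_n, D_n, E_{6,7,8}, G_2$ (previous proposition) and now $B_n, F_4$, only $C_n$ survives, which is the claim. I would close by remarking that in the surviving case the pair $(-\delta, \alpha) = (e_1 + e_2, e_1 - e_2)$ matches exactly the quadratic form $q = \omega$ construction of Section \ref{e2}, so the subsequent sections can proceed to identify $M$ with (a quotient of) $\mathcal D_{\Sp(2n,\C)}$.
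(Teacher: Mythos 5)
Your proof is correct and rests on the same ingredients as the paper's (Lemma \ref{Lemm2}, Proposition \ref{prop20} and the explicit root data), but the contradiction you extract is genuinely different. The paper eliminates $B_n$, $n>2$, by propagation inside $\mathfrak{h}$: since $\delta-(e_j+e_k)$ is not a root, $\mathfrak{g}_{e_j+e_k}$ cannot be paired with anything and so lies in $\mathfrak{h}$; likewise $\mathfrak{g}_{-e_k}\subset\mathfrak{h}$ by Lemma \ref{Lemm2}(3); bracketing the two gives $\mathfrak{g}_{\alpha}\subset\mathfrak{h}$, contradicting $\mathfrak{g}_{\alpha}\nsubseteq\mathfrak{h}$ (and similarly for $F_4$). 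You instead exhibit a second positive root $\beta\neq\alpha$ with $\delta(H_\beta)=0$ and $\delta-\beta$ a root, which violates the uniqueness clause of Lemma \ref{Lemm2}(2) directly, since it would give $H_\alpha^{\perp}=\mathfrak{a}\cap\mathfrak{h}=H_\beta^{\perp}$ for two non-proportional roots. This is shorter and bypasses the bracket computation; both mechanisms are valid. Three points need tightening before yours is complete. First, your $B_n$ witness $\beta=e_3$ exists only for $n\geq 3$, so you must add, as the paper does, that the leftover case $B_2=C_2$ is absorbed into the $C_n$ conclusion rather than eliminated. Second, for $F_4$ you assert a witness without producing one: for $(-\delta,\alpha)=(e_1,e_2)$ take $\beta=e_3$ again, and for $(-\delta,\alpha)=\left(\frac{1}{2}(e_1+e_2-e_3-e_4),\frac{1}{2}(e_1+e_2+e_3+e_4)\right)$ take $\beta=\frac{1}{2}(e_1-e_2+e_3-e_4)$, which is positive, orthogonal to $\delta$, distinct from $\alpha$, and satisfies $\delta-\beta=-e_1+e_4$, a root. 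Third, the condition you need is that $\delta-\beta$ be a root (so that $[\mathfrak{g}_\beta,\mathfrak{g}_{\delta-\beta}]=\mathfrak{g}_\delta$ in the proof of Lemma \ref{Lemm2}(2)), not ``$-\delta-\beta$ a root'' as written in your $F_4$ sentence; in the examples above both happen to be roots, so nothing breaks, but the slip should be corrected.
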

\begin{proof}
We first prove that the $B_{n}$ case is impossible. Assume $n>2$, $(-\delta,\alpha)=(e_{i},e_{j})$ and let $\beta=e_{j}+e_{k}$ with $i\neq j\neq k$. As $\delta-\beta=-e_{i}-e_{j}-e_{k}$ is not a root, we have that $\mathfrak{g}_{\beta}\subset \mathfrak{h}$. On the other hand $-e_{k}$ is orthogonal to $\delta$ and $\mathfrak{g}_{e_{k}}\subset \mathfrak{h}$ so by Lemma \ref{Lemm2}, $\mathfrak{g}_{-e_{k}}\subset \mathfrak{h}$. Thus $\left[\mathfrak{g}_{\beta},\mathfrak{g}_{-e_{k}}\right]=\mathfrak{g}_{\alpha}\subset \mathfrak{h}$ which is a contradiction. So $n$ must be equal to $2$ and $B_{2}=C_{2}$ \cite[Pages~26-27]{Serre}.

As for the $F_{4}$ case, the same proof works.

\end{proof}

The only remaining case is the $C_{n}$ type. In this case we have only one possibility for the pairs $(-\delta,\alpha)$. Namely:
\begin{prop}
$(-\delta,\alpha)=(e_{1}+e_{2},e_{1}-e_{2})$.
\end{prop}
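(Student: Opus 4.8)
The plan is to pin down the pair $(-\delta,\alpha)$ inside the root system $C_n$ by exploiting the two constraints already isolated: $\delta\perp\alpha$ and $\delta-\alpha$ is a root, together with the structural consequences of Lemma \ref{Lemm2} and Proposition \ref{prop20}. Recall the standard description of $C_n$: the roots are $\pm e_i\pm e_j$ ($i\neq j$), the short roots, and $\pm 2e_i$, the long roots, with $|\pm e_i\pm e_j|^2=2$ and $|\pm 2e_i|^2=4$. Since the previous proposition established that a valid pair exists in $C_n$, and since we already computed (Equation \ref{equation9}) that $|\delta-\alpha|^2=|\delta|^2+|\alpha|^2$, the first step is to enumerate which triples of roots $(\delta,\alpha,\delta-\alpha)$ of $C_n$ satisfy this length identity with $\delta\perp\alpha$. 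If $\delta$ and $\alpha$ are both short, $|\delta-\alpha|^2=4$, so $\delta-\alpha$ must be long; if one is short and one long, $|\delta-\alpha|^2=6$, which is not a root length in $C_n$; if both are long, $|\delta-\alpha|^2=8$, again impossible. Hence \emph{both $\delta$ and $\alpha$ are short and $\delta-\alpha$ is long}. Up to the Weyl group action (which acts by signed permutations of the $e_i$) one reduces to $-\delta = e_1+e_2$ or $-\delta=e_1-e_2$, and then the condition that $\delta-\alpha = -(e_1+e_2)-\alpha$ (or the analogue) be of the form $\pm 2e_k$ with $\alpha$ short and $\alpha\perp\delta$ forces, after a short case check, $(-\delta,\alpha)=(e_1+e_2,\,e_1-e_2)$, since e.g. $(e_1+e_2)-(e_1-e_2)=2e_2$ is indeed a (long) root while the orthogonality $e_1+e_2 \perp e_1-e_2$ holds.

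Next I would rule out the competing Weyl-orbit representative $-\delta=e_1-e_2$. If $-\delta=e_1-e_2$ then the requirement that $\alpha$ be short, orthogonal to $e_1-e_2$, and that $(e_1-e_2)-\alpha$ be long, forces $\alpha$ to involve indices outside $\{1,2\}$ or to be $e_1+e_2$; one checks directly that none of these produce a long root upon subtraction — $(e_1-e_2)-(e_1+e_2)=-2e_2$ works arithmetically but then $\alpha=e_1+e_2$ is not orthogonal to $\delta=e_2-e_1$? actually $\langle e_2-e_1, e_1+e_2\rangle = 1 - 1 = 0$, so this case must instead be eliminated by showing it is Weyl-conjugate to the first, or by the rigidity argument below. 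Rather than belabor the combinatorics, the cleaner route is: after reducing to the finitely many arithmetic possibilities, apply the $\mathfrak{h}$-structure constraints. By Lemma \ref{Lemm2}(2), $\mathfrak{a}\cap\mathfrak{h}=H_\alpha^\perp$, and by Proposition \ref{prop20}, for every positive root $\beta\neq\alpha$ not orthogonal to $\alpha$ we have $\mathfrak{g}_{-\beta}\nsubseteq\mathfrak{h}$, hence (since $\beta\neq\alpha$ and using Lemma \ref{Lemm2}) $\delta-\beta$ is a root and $\delta(H_\beta)=0$, i.e. $\beta\perp\delta$. So: every positive root not orthogonal to $\alpha$ must be orthogonal to $\delta$. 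One then checks this implication is internally consistent precisely for $(-\delta,\alpha)=(e_1+e_2,e_1-e_2)$ and fails for any other candidate in the Weyl orbit, because for the wrong representative one can exhibit a positive root $\beta$ with $\langle\beta,\alpha\rangle\neq 0$ yet $\langle\beta,\delta\rangle\neq 0$.

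The main obstacle I anticipate is the bookkeeping in the second step: carefully listing, up to the Weyl group of $C_n$ (signed permutations), all pairs $(\delta,\alpha)$ of short roots with $\delta\perp\alpha$ and $\delta-\alpha$ long, and then for each one deciding whether the compatibility condition ``$\beta$ not $\perp\alpha$ $\Rightarrow$ $\beta\perp\delta$'' (for positive $\beta$) holds — this requires looking at the actual root coordinates and is the kind of finite but fiddly verification that must be done with care to be sure no case is missed. I would structure the written proof as: (i) the length argument forcing $\delta,\alpha$ short and $\delta-\alpha$ long; (ii) reduction by the Weyl group to $-\delta=e_1+e_2$; (iii) solving $\delta-\alpha\in\{\pm 2e_k\}$ with $\alpha\perp\delta$ to get $\alpha=e_1-e_2$; and (iv) a brief remark that this is the unique solution, invoking Proposition \ref{prop20} to discard any spurious alternative. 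Since the statement only asserts $(-\delta,\alpha)=(e_1+e_2,e_1-e_2)$, steps (i)-(iii) essentially suffice, with (iv) ensuring uniqueness.
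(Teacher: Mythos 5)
Your steps (i)--(iii) are arithmetically fine, but they only recover what the preceding proposition already established, namely the pair $(-\delta,\alpha)$ up to the Weyl group. The entire content of the present proposition is to identify the representative inside the \emph{fixed} positive system, and you cannot normalize by the Weyl group here: a Weyl element does not preserve $\mathfrak{g}_{+}$ nor the collection of root spaces contained in $\mathfrak{h}$. You recognize this and delegate the job to your step (iv), but the criterion you extract there is incorrect. From Proposition \ref{prop20} applied to a positive root $\beta\neq\alpha$ with $\beta\not\perp\alpha$ one gets $\mathfrak{g}_{-\beta}\nsubseteq\mathfrak{h}$; the correct consequence is that $\mathfrak{g}_{-\beta}$ must be paired with some $\mathfrak{g}_{\gamma}$, which forces $\gamma=\delta+\beta\in\Delta\cup\{0\}$ --- not that ``$\delta-\beta$ is a root and $\delta(H_{\beta})=0$.'' Indeed the implication you state, ``every positive root not orthogonal to $\alpha$ is orthogonal to $\delta$,'' is false for the \emph{correct} configuration: with $-\delta=e_{1}+e_{2}$, $\alpha=e_{1}-e_{2}$ and $\beta=e_{1}-e_{3}$ one has $\beta\cdot\alpha=1\neq 0$ and $\beta\cdot\delta=-1\neq 0$, while $\delta+\beta=-e_{2}-e_{3}$ is a perfectly good root, so no contradiction arises. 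Your test would therefore reject every candidate, including the right one. (Pushing Lemma \ref{Lemm2}(2)--(3) correctly, one finds that such a $\beta$ must in fact satisfy $\delta(H_{\beta})\neq 0$, the opposite of what you assert.)

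The repair is the argument the paper uses: for a candidate $(-\delta,\alpha)=(e_{i}+e_{j},e_{i}-e_{j})$ with $(i,j)\neq(1,2)$, take $\beta=e_{1}-e_{j}$ if $i\neq 1$ (resp.\ $\beta=e_{2}-e_{j}$ if $j\neq 2$); then $\beta$ is positive, distinct from and not orthogonal to $\alpha$, so $\delta+\beta$ would have to be a root, yet $\delta+\beta=e_{1}-e_{i}-2e_{j}$ (resp.\ $e_{2}-e_{1}-2e_{j}$) is not one. The extra candidate you correctly flag, $(-\delta,\alpha)=(e_{i}-e_{j},e_{i}+e_{j})$, is eliminated the same way, e.g.\ with $\beta=2e_{j}$, for which $\delta+\beta=3e_{j}-e_{i}$ is not a root.
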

\begin{proof}
By contradiction, assume that $(-\delta,\alpha)=(e_{i}+e_{j},e_{i}-e_{j})$ for some $1\leq i<j\leq n$ such that $i\neq 1$ or $j\neq 2$. If $i\neq 1$, then $\beta=e_{1}-e_{j}\neq \alpha$ is a positive root  which is not orthogonal to $\alpha$. Thus by Proposition \ref{prop20}, $\mathfrak{g}_{-\beta}\nsubseteq \mathfrak{h}$ and hence $\delta+\beta=-e_{i}-2e_{j}+e_{1}$ is also a negative root which is clearly false. If in contrast $j\neq 2$ then take $\beta=e_{2}-e_{j}$ and the same proof works.
\end{proof}

The fact that we already have an example of such type (Example \ref{e2}) together with the uniqueness property in Proposition \ref{proposition5} give us:
\begin{cor}
If $\mathfrak{a}\nsubseteq \mathfrak{h}$ and $\mathfrak{g}_{+}\nsubseteq \mathfrak{h}$ then $G=\Sp(n,\C)$ and $M=M_{1}$. In particular $M$ is conformally flat.
\end{cor}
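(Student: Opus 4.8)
The plan is to show that once we know $\mathfrak{g}$ is (up to modification) the simple Lie algebra $C_n = \mathfrak{sp}(n,\C)$ with the distinguished pair $(-\delta,\alpha) = (e_1+e_2, e_1-e_2)$, the triple $(\mathfrak{g}, \mathfrak{h}, \mathcal{G})$ is completely pinned down, and hence must coincide with the homogeneous model $(\PSp(2n,\C), H, \mathcal{G})$ built in Example \ref{e2}. First I would assemble exactly what has been proved: $\delta = -(e_1+e_2)$ is a root, $\mathfrak{a}\cap\mathfrak{h} = H_\alpha^\perp$ (codimension one in $\mathfrak{a}$) by Lemma \ref{Lemm2}, $H_\delta \in \mathfrak{a}\cap\mathfrak{h}$, $\mathfrak{g}_\delta \not\subset \mathfrak{p}$, and $\mathfrak{g}_{\delta} \not\subset \mathfrak{h}$ together with $\mathfrak{g}_\alpha \not\subset \mathfrak{h}$. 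By Proposition \ref{proposition5}, the conformal class of $\langle\cdot,\cdot\rangle$ depends only on $\mathfrak{a}\cap\mathfrak{h}$ and $\mathfrak{g}_\delta$, both of which are now explicit; so $\langle\cdot,\cdot\rangle$, and therefore $\mathfrak{h} = \ker\langle\cdot,\cdot\rangle$, is determined by the root-theoretic data alone, independently of the original manifold $M$.

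Next I would make $\mathfrak{h}$ explicit. For every positive root $\beta \neq \alpha$, Lemma \ref{Lemm2}(1) forces $\mathfrak{g}_\beta \subset \mathfrak{h}$ whenever $\delta(H_\beta)\neq 0$, i.e. whenever $\beta$ is not $B$-orthogonal to $\delta$; and for those $\beta$ orthogonal to $\delta$ one uses Proposition \ref{prop20} and the $\mathfrak{sl}_2$-triple argument of Lemma \ref{Lemm2}(3) exactly as in the proof that the $B_n$ and $F_4$ cases are excluded, to decide membership in $\mathfrak{h}$. Carrying this out in the canonical $C_n$ root system with $\delta = -(e_1+e_2)$, one finds that $\mathfrak{h}$ consists of $\mathfrak{a}\cap\mathfrak{h} = H_\alpha^\perp$ together with all root spaces $\mathfrak{g}_\gamma$ except $\mathfrak{g}_{\pm\alpha}$ and $\mathfrak{g}_{\pm\delta}$ (and the handful that the $\mathfrak{sl}_2$-orbit argument drags in alongside them) --- precisely the subalgebra $\mathfrak{q} = \Lie(Q)$ of Example \ref{e2}, where $Q$ was described as the kernel of $Q' \to \PGL(2,\C)$ with $Q'$ the parabolic stabilizer of an $\omega$-isotropic $2$-plane. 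One checks that $\mathfrak{q}$ has the same dimension as the $\mathfrak{h}$ just computed and contains it, hence they are equal.

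At that point the abstract data $(\mathfrak{g},\mathfrak{h})$ agree with those of Example \ref{e2}, so $M$ (being simply connected in the first instance, or in general after noting $\pi_1$ lies in $\PGL(2,\C)$) is $G$-equivariantly identified with $\PSp(2n,\C)/H = \mathcal{D}$ or a quotient thereof, i.e. $M = M_1$; since $\mathcal{D}$ sits inside the quadric $\QQ_{4n-2}(\C) = \Eins_{4n-2}(\C)$, which carries the flat holomorphic conformal model, $M$ is conformally flat. The main obstacle I anticipate is the bookkeeping in the explicit identification of $\mathfrak{h}$ inside $C_n$: one must verify that the root spaces forced into $\mathfrak{h}$ by Lemma \ref{Lemm2} and Proposition \ref{prop20}, closed up under the bracket (so that $\mathfrak{h}$ is genuinely a subalgebra and an ideal of $\mathfrak{p}$), give exactly the codimension-one-in-its-normalizer subalgebra $\mathfrak{q}$ and not something smaller --- in other words, that no further root space can escape $\mathfrak{h}$ without violating one of the constraints. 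This is a finite check in the $C_n$ diagram, but it is where the argument really has to be done rather than quoted. The appeal to Proposition \ref{proposition5} is what makes it legitimate to ignore $M$ entirely and argue purely with roots, so I would lean on that as the conceptual shortcut.
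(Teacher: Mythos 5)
Your proposal is correct and follows essentially the same route as the paper: the paper's entire justification is that the existence of Example \ref{e2} combined with the uniqueness property of Proposition \ref{proposition5} (the conformal class is determined by $\mathfrak{a}\cap\mathfrak{h}$ and $\mathfrak{g}_{\delta}$, both of which are pinned down by the preceding propositions) forces the structure to coincide with that of $M_{1}$. Your additional explicit bookkeeping of which root spaces lie in $\mathfrak{h}$ is a legitimate elaboration of what the paper leaves implicit, not a different argument.
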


\section{The $\operatorname{SL}(n,\mathbb{C})$ case}
\label{sect4}
In this part, we will prove Theorem \ref{theo1} when $\mathfrak{a}\nsubseteq \mathfrak{h}$ and $\mathfrak{g}_{+}\subset \mathfrak{h}$. In this case $\delta$ is a negative root. Let $\alpha$ be a positive root such that $\delta-\alpha$ is also a root. Consequently,  $\mathfrak{g}_{\delta-\alpha}\subset \mathfrak{h}$. If this were not the case, then $\delta-\alpha$ would be paired with $\alpha$, leading to a contradiction. Now, on the one hand $\mathfrak{g}_{\delta}=\left[\mathfrak{g}_{\delta-\alpha},\mathfrak{g}_{\alpha} \right] \subset \mathfrak{h}\subset \mathfrak{p}$. On the other hand, according to Proposition \ref{Proposition1}, $\mathfrak{g}_{\delta}\cap \mathfrak{p}=\lbrace 0\rbrace$. This leads to a contradiction. Thus:
\begin{prop}
The negative root $\delta$ is the minimal root.
\end{prop}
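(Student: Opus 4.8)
The plan is to show that if $\alpha$ is a positive root with $\delta - \alpha$ a root, then $\delta$ forced to be $-\alpha$ for some \emph{simple} $\alpha$, and indeed the unique lowest root. First I would recall what has already been established in this section: $\mathfrak{a} \nsubseteq \mathfrak{h}$, $\mathfrak{g}_+ \subset \mathfrak{h}$, and hence $\delta$ is a negative root (being paired with $0$, by Proposition \ref{Proposition1}(2)); moreover, the argument just given shows that \emph{whenever} $\alpha > 0$ and $\delta - \alpha$ is a root, one gets $\mathfrak{g}_\delta \subset \mathfrak{h} \subset \mathfrak{p}$, contradicting $\mathfrak{g}_\delta \cap \mathfrak{p} = \{0\}$. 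Therefore the situation "$\delta - \alpha$ is a root for some positive $\alpha$'' cannot occur at all. This is exactly the statement that $\delta$ is the minimal (lowest) root: a negative root $\delta$ is minimal in the root ordering if and only if $\delta - \alpha$ fails to be a root for every positive root $\alpha$ — equivalently, $-\delta$ is the highest root.

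So the core of the argument is already in place in the displayed paragraph; what remains is to package it as the clean equivalence. Concretely, I would argue: since $\delta \ne 0$ (essentiality) and $\delta$ is paired with $0$, $\delta$ is a genuine root; write $\delta < 0$. Suppose $\delta$ is not the minimal root. Then there is a positive root $\alpha$ with $\delta + \alpha$ still a root (walk up one step in the $\alpha$-string through $\delta$), i.e.\ $\delta - (-\alpha)$... — more simply, there exists a \emph{simple} root $\alpha_i$ such that $\delta + \alpha_i$ is a root, because the lowest root is the unique negative root that cannot be raised by a simple root, and every non-minimal negative root admits such a simple raising (standard fact about root strings / the partial order, e.g.\ \cite[Appendix C]{K}). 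Set $\alpha = \alpha_i > 0$; then $\delta - \alpha = \delta - \alpha_i$ is... wait — I need $\delta - \alpha$ to be a root with $\alpha$ positive, so I instead take $\alpha := -(\delta + \alpha_i') $? Let me re-fix the bookkeeping: the paragraph's hypothesis is "$\alpha$ positive, $\delta - \alpha$ a root''. If $\delta$ is not minimal, then $\delta = \delta' - \alpha_i$ for some root $\delta' = \delta + \alpha_i < 0$ and simple $\alpha_i$, hence with $\alpha := \alpha_i$ positive we have $\delta + \alpha$ a root; applying the same reasoning to the string on the other side (or directly: $\delta$ paired with $0$ means $\delta - \alpha$ root $\Rightarrow \delta \in \Delta$, and by \cite[Corollary~2.35]{K}, $[\mathfrak{g}_{\delta - \alpha}, \mathfrak{g}_\alpha] = \mathfrak{g}_\delta$) we land on $\mathfrak{g}_\delta \subset \mathfrak{h}$, contradiction. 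The one subtlety is ensuring we genuinely land in the hypothesis "$\delta - \alpha$ is a root, $\alpha > 0$'': if $\delta$ is not the lowest root, then $-\delta$ is not the highest root, so there is a positive root $\alpha$ with $-\delta + \alpha \in \Delta$ too, equivalently $\delta - \alpha \in \Delta$ — this is the precise statement I would cite.

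The main (and only real) obstacle is this purely combinatorial lemma about the root ordering: \emph{a negative root $\delta$ is the minimal root iff $\delta - \alpha \notin \Delta$ for every positive root $\alpha$}, equivalently iff $-\delta$ is the highest root. I expect this to be a one-line citation to the structure theory in \cite{K} (highest root, root strings, \cite[Corollary~2.35]{K}), so the proof reduces to: (i) $\delta$ is a negative root [already shown]; (ii) for \emph{every} positive $\alpha$ with $\delta-\alpha$ a root, $\mathfrak{g}_\delta \subseteq \mathfrak{h} \cap \mathfrak{p} = \{0\}$ contradiction [the displayed paragraph]; (iii) hence no such $\alpha$ exists; (iv) by the combinatorial lemma, $\delta$ is minimal. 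I would write it up in essentially that order, keeping step (ii) verbatim as the heart and treating (iv) as the formal conclusion.
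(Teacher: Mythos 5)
Your proposal is correct and takes essentially the same route as the paper: the paper's entire proof is the paragraph immediately preceding the proposition, which shows that for any positive root $\alpha$ with $\delta-\alpha\in\Delta$ one would get $\mathfrak{g}_{\delta-\alpha}\subset\mathfrak{h}$ (else $\delta-\alpha$ is paired with $\alpha\subset\mathfrak{g}_{+}\subset\mathfrak{h}$, the kernel), hence $\mathfrak{g}_{\delta}=[\mathfrak{g}_{\delta-\alpha},\mathfrak{g}_{\alpha}]\subset\mathfrak{h}\subset\mathfrak{p}$, contradicting Proposition \ref{Proposition1}. Your only addition is to spell out the combinatorial bridge the paper leaves implicit (a negative root of the irreducible system that cannot be lowered by any positive root is the lowest root, i.e.\ $-\delta$ is the highest root), and despite the bookkeeping detour in the middle you land on the correct form of that lemma.
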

As a consequence we get:
\begin{prop}
The only possible type is $A_{n}$. In particular $-\delta=e_{1}-e_{n+1}$.
\end{prop}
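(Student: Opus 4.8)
We must show that if $\mathfrak{a}\nsubseteq\mathfrak{h}$ and $\mathfrak{g}_+\subset\mathfrak{h}$, then the only possible simple root system (after the modification reducing to the simple case by Proposition \ref{Prop1}) is of type $A_n$, and moreover the minimal root $\delta$ equals $-(e_1-e_{n+1})$ in the canonical coordinates. The key input we already have is the previous proposition: $\delta$ is the \emph{minimal} (lowest) root of $\Delta$ — equivalently, $-\delta$ is the highest root. So the whole statement reduces to a purely combinatorial fact about root systems: \emph{the only irreducible root system whose highest root $\theta=-\delta$ admits a positive root $\alpha$ with $\theta-\alpha$ still a root and such that the pairing/essentiality data are consistent is $A_n$, with $\theta=e_1-e_{n+1}$.}

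\textbf{Key steps.} First I would recall from Lemma \ref{Lemm2} and Proposition \ref{Proposition1} the constraints on $\alpha$: since $\mathfrak{a}\nsubseteq\mathfrak{h}$, the codimension of $\mathfrak{a}\cap\mathfrak{h}$ in $\mathfrak{a}$ is one, $\delta$ is a root paired with $0$, and there is a positive root $\alpha$ with $\mathfrak{g}_{-\alpha}\nsubseteq\mathfrak{h}$ (hence $\delta+\alpha$, i.e. $\delta-(-\alpha)$, is a root). Combined with $\mathfrak{g}_+\subset\mathfrak{h}$ one reads off that $\delta$ is the lowest root and that $-\delta$ fails to be orthogonal to at least one simple root in a controlled way. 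Second — the combinatorial heart — I would go through the classification of irreducible root systems and examine the highest root $\theta$ in each: for $A_n$, $\theta=e_1-e_{n+1}=\alpha_1+\cdots+\alpha_n$ has coefficient $1$ on every simple root; for $B_n, C_n, D_n$ and the exceptional types, $\theta$ has some simple-root coefficient $\geq 2$. The point is that when $\theta$ has a simple root occurring with coefficient $1$ only at the ends (type $A$), the structure of $\mathfrak{h}$ forced by $\mathfrak{g}_+\subset\mathfrak{h}$, $\mathfrak{a}\cap\mathfrak{h}$ of codimension $1$, and Lemma \ref{Lemm2}(1),(3) is consistent; otherwise one derives, exactly as in the $\Sp$-case arguments (Propositions \ref{prop20}, and the $B_n$/$F_4$ elimination), a root $\beta$ forcing $\mathfrak{g}_\alpha\subset\mathfrak{h}$, contradicting $\mathfrak{g}_{-\alpha}\nsubseteq\mathfrak{h}$ via Lemma \ref{Lemm2}. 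Concretely: in any non-$A$ type one can find a positive root $\beta\neq\alpha$, not orthogonal to $\alpha$, with $\delta-(-\beta)=\delta+\beta$ not a root, so $\mathfrak{g}_{-\beta}\subset\mathfrak{h}$; bracketing suitable such elements lands $\mathfrak{g}_\alpha$ inside $\mathfrak{h}$ — contradiction. Third, once the type is pinned to $A_n$, uniqueness of $-\delta$ up to the Weyl group identifies it with the highest root $e_1-e_{n+1}$, which has a one-dimensional stabilizer datum matching Example \ref{e3}.

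\textbf{Main obstacle.} The delicate part is the case-by-case elimination of $B_n, C_n, D_n, F_4, E_6, E_7, E_8, G_2$: one must, for each, exhibit the obstructing root $\beta$ (not orthogonal to $\alpha$, with $\delta+\beta$ a non-root) and verify the bracket relation $[\mathfrak{g}_{-\beta},\mathfrak{g}_{\pm\text{(something)}}]=\mathfrak{g}_\alpha$ using $[\mathfrak{g}_\mu,\mathfrak{g}_\nu]=\mathfrak{g}_{\mu+\nu}$ whenever $\mu+\nu$ is a root (\cite[Corollary~2.35]{K}). For the simply-laced types this is quick since all roots have equal length and the highest root has a coefficient $\geq 2$ (e.g. in $D_n$, $\theta=e_1+e_2$ while $\alpha$ would have to be orthogonal to it — already handled — so one checks no valid $\alpha$ exists at all; for $E$-types similarly the highest root's coefficients force a contradiction with codimension one). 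For $B_n$ and $F_4$ the argument is literally the one already run in Section \ref{sect3}. I expect the cleanest write-up to first dispatch all types where $-\delta=\theta$ admits \emph{no} positive root $\alpha$ with $\theta-\alpha$ a root and $\alpha\perp\theta$ failing appropriately, then handle the surviving candidates by the bracketing contradiction, leaving $A_n$ as the unique survivor with $-\delta=e_1-e_{n+1}$.
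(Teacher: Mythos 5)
Your reduction is the right one and matches the paper's: by the preceding proposition $\delta$ is the minimal root, so $-\delta$ is the highest root, and the statement becomes a type-by-type elimination over the irreducible root systems. The observation that ``$\delta+\beta$ not a root $\Rightarrow \mathfrak{g}_{-\beta}\subset\mathfrak{h}$'' is also exactly the paper's key step. The gap is in the contradiction you propose to extract from this. You want to bracket such spaces until ``$\mathfrak{g}_{\alpha}$ lands inside $\mathfrak{h}$, contradicting $\mathfrak{g}_{-\alpha}\nsubseteq\mathfrak{h}$.'' But in the present case $\mathfrak{g}_{+}\subset\mathfrak{h}$ by hypothesis, so $\mathfrak{g}_{\alpha}\subset\mathfrak{h}$ holds for \emph{every} positive root $\alpha$ and contradicts nothing; and $\mathfrak{g}_{\alpha}\subset\mathfrak{h}$ only forces $\mathfrak{g}_{-\alpha}\subset\mathfrak{h}$ when $\alpha$ is orthogonal to $\delta$ (Lemma \ref{Lemm2}(3)), which is precisely what fails for the roots you need to control. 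For the same reason the $B_{n}$/$F_{4}$ elimination of Section \ref{sect3} cannot be transplanted ``literally'': there the contradiction was $\mathfrak{g}_{\alpha}\subset\mathfrak{h}$ for a distinguished \emph{positive} root with $\mathfrak{g}_{\alpha}\nsubseteq\mathfrak{h}$, a configuration that no longer exists once $\mathfrak{g}_{+}\subset\mathfrak{h}$.

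The contradiction that actually closes the argument --- and which you invoke only in passing for the $E$-types --- is the codimension-one constraint of Proposition \ref{Proposition1}(2). Whenever $\mathfrak{g}_{-\beta}\subset\mathfrak{h}$ for a positive root $\beta$ (either because $\delta+\beta$ is not a root, or because $\beta\perp\delta$ so that Lemma \ref{Lemm2}(3) applies, $\mathfrak{g}_{\beta}$ being automatically in $\mathfrak{h}$), the coroot $H_{\beta}\in\left[\mathfrak{g}_{\beta},\mathfrak{g}_{-\beta}\right]$ lies in $\mathfrak{a}\cap\mathfrak{h}$. Together with $H_{\delta}\in\mathfrak{a}\cap\mathfrak{h}$, one then checks in each non-$A_{n}$ type that these coroots span all of $\mathfrak{a}$ (for $B_{n}$: $H_{\delta}$, $H_{e_{1}-e_{2}}$ and the $H_{e_{i}}$, $i\geq 3$; analogously for $C_{n}$, $D_{n}$, $F_{4}$, $G_{2}$, $E_{6}$, $E_{8}$; in $E_{7}$ one gets directly $\mathfrak{h}=\mathfrak{g}$). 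This gives $\mathfrak{a}\subset\mathfrak{h}$, contradicting $\mathfrak{a}\nsubseteq\mathfrak{h}$. This coroot-accumulation step is the engine of the paper's proof; without substituting it for your bracketing step, the elimination of the non-$A_{n}$ types does not go through. The final identification $-\delta=e_{1}-e_{n+1}$ in type $A_{n}$ is then immediate, as you say, since the minimal root is unique for the canonical order.
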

\begin{proof}
First assume that we are in the $B_{n}$ type. In this case, we have $\delta=-e_{1}-e_{2}$. Here $e_{1}-e_{2}$, $e_{i}$ for $i\geq 3$ are all orthogonal to $\delta$. Using Lemma \ref{Lemm2}, this implies that $H_{e_{1}-e_{2}}$ and $H_{e_{i}}$ for $i\geq 3$ belong to $\mathfrak{a}\cap \mathfrak{h}$. As $H_{\delta}\in \mathfrak{a}\cap \mathfrak{h}$, we get that $\mathfrak{a}\cap \mathfrak{h}=\mathfrak{a}$ which contradicts Proposition \ref{Proposition1}. 

The same proof works for the $C_{n}$ and $D_{n}$ types.

In the exceptional case $E_{6}$,  $-\delta=\frac{1}{2}(e_{8}-e_{7}-e_{6}+e_{5}+e_{4}+e_{3}+e_{2}+e_{1})$. On the one hand $\delta+\alpha_{1}$, $\delta+\alpha_{3}$, $\delta+\alpha_{4}$, $\delta+\alpha_{5}$ and $\delta+\alpha_{6}$ are not roots. So $\mathfrak{g}_{-\alpha_{1}}$, $\mathfrak{g}_{-\alpha_{3}}$, $\mathfrak{g}_{-\alpha_{4}}$, $\mathfrak{g}_{-\alpha_{5}}$, $\mathfrak{g}_{-\alpha_{6}}$ are all in $\mathfrak{h}$. This shows that $H_{\alpha_{1}}$, $H_{\alpha_{3}}$, $H_{\alpha_{4}}$, $H_{\alpha_{5}}$, $H_{\alpha_{6}}$ are all in $\mathfrak{a}\cap \mathfrak{h}$. On the other hand, $H_{\delta}\in \mathfrak{a}\cap \mathfrak{h}$. But $\delta$, $\alpha_{1}$, $\alpha_{3}$, $\alpha_{4}$, $\alpha_{5}$, $\alpha_{6}$ are linearly independent. Thus $\mathfrak{a}\cap \mathfrak{h}=\mathfrak{a}$ which contradicts Proposition \ref{Proposition1}. 

In the exceptional case $E_{7}$, $-\delta=e_{8}-e_{7}$. In this case for every $1\leq i\leq 7$,  $\delta+\alpha_{i}$ is not a root. This means that all the $\mathfrak{g}_{-\alpha_{i}}$ are  in $\mathfrak{h}$. Hence $\mathfrak{g}=\mathfrak{h}$ which is a contradiction.

In the exceptional case $E_{8}$, $-\delta=\frac{1}{2}(e_{8}+e_{7}+e_{6}+e_{5}+e_{4}+e_{3}+e_{2}+e_{1})$ and the same proof as in exceptional case $E_{6}$ works here too. 

Now let us consider the exceptional case $G_{2}$. Here $-\delta=2e_{3}-e_{2}-e_{1}$. Consequently, $\delta+\alpha_{1}$ is not a root and hence  $\mathfrak{g}_{-\alpha_{1}}\subset \mathfrak{h}$. Thus $H_{\alpha_{1}}\in \mathfrak{a}\cap \mathfrak{h}$. Together with the fact that $H_{\delta}\in \mathfrak{a}\cap \mathfrak{h}$, we conclude that $\mathfrak{a}\cap \mathfrak{h}=\mathfrak{a}$ which is in contradiction with Proposition \ref{Proposition1}.

To conclude, let's consider the exceptional case $F_{4}$. Here we also have $-\delta=e_{1}+e_{2}$. Consequently,   $\delta+\alpha_{1}$, $\delta+\alpha_{2}$, and $\delta+\alpha_{3}$ are not roots. This implies that $\mathfrak{g}_{-\alpha_{1}}$, $\mathfrak{g}_{-\alpha_{2}}$, $\mathfrak{g}_{-\alpha_{3}}$ are all in $\mathfrak{h}$ and therefore $H_{\alpha_{1}}$, $H_{\alpha_{2}}$ and $H_{\alpha_{2}}$ are  in $\mathfrak{a}\cap \mathfrak{h}$.  Together with the fact that $H_{\delta}\in \mathfrak{a}\cap \mathfrak{h}$, we deduce that $\mathfrak{a}\cap \mathfrak{h}=\mathfrak{a}$, which once more  contradicts Proposition \ref{Proposition1}.
\end{proof}
In the remaining $A_{n}$ case, the sub-algebra $\mathfrak{a}\cap\mathfrak{h}$ is completely determined by the root $\delta$. Indeed, $-\delta=e_{1}-e_{n+1}$ and so  $\mathfrak{a}\cap\mathfrak{h}$ is generated by the vector $H_{e_{1}-e_{n+1}}$ and all  vectors $H_{e_{i}-e_{j}}$, where $i<j\in \lbrace 1,...,n+1\rbrace \backslash \lbrace 1, n+1\rbrace$. The uniqueness property in Proposition \ref{proposition5} along with the existence of such Example (as in Example \ref{e3}) give us:
\begin{cor}
If $\mathfrak{a}\nsubseteq \mathfrak{h}$ and $\mathfrak{g}_{+}\subseteq \mathfrak{h}$ then $G=\SL(n,\C)$ and $M=M_{2}$. In particular, $M$ is conformally flat.
\end{cor}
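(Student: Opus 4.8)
The final statement to prove is the Corollary in the $\SL(n,\C)$ case: if $\mathfrak{a}\nsubseteq\mathfrak{h}$ and $\mathfrak{g}_+\subseteq\mathfrak{h}$, then $G=\SL(n,\C)$ and $M=M_2$, and in particular $M$ is conformally flat.

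\medskip

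The plan is to combine the structural results already established in this section with the uniqueness principle of Proposition~\ref{proposition5} and the explicit construction of Example~\ref{e3}. By the preceding proposition, under the hypotheses $\mathfrak{a}\nsubseteq\mathfrak{h}$, $\mathfrak{g}_+\subseteq\mathfrak{h}$, the only possible root system is $A_n$, so up to modification $\mathfrak{g}=\mathfrak{sl}(n+1,\C)$, and moreover the distortion $\delta$ is forced to be the minimal root, i.e.\ $-\delta=e_1-e_{n+1}$. First I would record that this pins down $\mathfrak{a}\cap\mathfrak{h}$ completely: since $\delta(H_\alpha)\neq 0$ forces $\mathfrak{g}_\alpha\subset\mathfrak{h}$ (Lemma~\ref{Lemm2}(1)) and, for positive roots $\alpha$ with $\delta(H_\alpha)=0$ and $\delta-\alpha$ not a root, one gets $\mathfrak{g}_{-\alpha}\subset\mathfrak{h}$ hence $H_\alpha\in\mathfrak{a}\cap\mathfrak{h}$, the hyperplane $\mathfrak{a}\cap\mathfrak{h}$ is spanned by $H_{e_1-e_{n+1}}$ together with all $H_{e_i-e_j}$ for $i<j$ in $\{1,\dots,n+1\}\setminus\{1,n+1\}$ — that is, $\mathfrak{a}\cap\mathfrak{h}=H_{e_1-e_{n+1}}^\perp$ with respect to the Killing form, exactly as stated in the text. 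Then I would invoke Proposition~\ref{proposition5}, which asserts that the conformal class of $\langle\cdot,\cdot\rangle$ depends only on the pair $(\mathfrak{a}\cap\mathfrak{h},\,\mathfrak{g}_\delta)$; since both of these data are now uniquely determined by $\delta$, the Lie-algebraic model $(\mathfrak{g},\mathfrak{h},\langle\cdot,\cdot\rangle)$ is unique up to isomorphism.

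\medskip

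The second half of the argument is to match this unique model with the concrete one. Example~\ref{e3} produces, for each $n$, the embedding $\SL(n,\C)\hookrightarrow\SO(2n,\C)$ via the $\GL(E)$-action on $E\times E^*$ preserving $q(x,f)=f(x)$, the open invariant domain $\mathcal D_{\SL(n,\C)}\subset\QQ(E\times E^*)$, the isotropy $Q$ with one-dimensional normalizer $P=Q'$, and — choosing a lattice $\Gamma$ in the complementary $\C^*$ — the compact quotient $M_2=\PSL(n,\C)/H$ with $H=\Gamma\ltimes Q$, covered by $\mathcal D=\PSL(n,\C)/Q$. One checks that this example falls precisely in the present case: here $\mathfrak{a}\nsubseteq\mathfrak{h}$ (the quotient $Q'/Q$ is one-dimensional and lies in $\mathfrak a$) and $\mathfrak{g}_+\subseteq\mathfrak{h}$ (the isotropy $Q$ is contained in a Borel, being the stabilizer of the flag $(\C e_1,\C e_1\oplus\cdots\oplus\C e_{n-1})$ intersected with the relevant slice), and its distortion is the minimal root of $A_n$. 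By the uniqueness just established, any $(M,G)$ satisfying the hypotheses of the Corollary is isomorphic, up to modification and up to the choice of $\pi_1$, to this example; hence $G=\SL(n,\C)$ (equivalently $\PSL(n,\C)$ acting effectively) and $M=M_2$. Finally, conformal flatness is immediate: $\mathcal D_{\SL(n,\C)}$ is an open subset of the quadric $\QQ(E\times E^*)\cong\Eins_{2n-2}(\C)$, which carries the flat conformal holomorphic structure, and $M_2$ is covered by $\mathcal D$, so the structure on $M$ is locally conformally equivalent to that of $\C^{2n-2}$.

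\medskip

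The step I expect to require the most care is the passage from ``the Lie-algebraic data are unique'' to ``$M$ is globally the example $M_2$'': Proposition~\ref{proposition5} only controls the local/infinitesimal picture (the germ of the homogeneous conformal structure at $x_0$), so one must still argue that the global completeness and the developing map force $M$ to be a quotient of the specific homogeneous domain $\mathcal D$, and to identify the possible $\pi_1(M)$ with a lattice in $\C^*$ (whence infinite cyclic). This is where one uses that $\mathfrak{a}\oplus\mathfrak{g}_+$ normalizes $\mathfrak h$ (the surrogate for compactness of $G/H$), that $\mathfrak h^0=\mathfrak q$, and that the normalizer $P/Q\cong\C^*$ supplies exactly the one extra dimension whose discrete quotients yield compactness; combined with simple connectedness of $\mathcal D$ this gives $\pi_1(M)\hookrightarrow\C^*$ as a cocompact — hence cyclic — subgroup. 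The remaining verifications — that the distortion is the minimal root in Example~\ref{e3}, and that $B_n,C_n,D_n$ and the exceptional types are excluded — are already carried out in the two propositions preceding the Corollary, so here one only needs to assemble them.
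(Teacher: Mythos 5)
Your argument is essentially identical to the paper's: the two preceding propositions force the type to be $A_{n}$ with $\delta$ the minimal root $-(e_{1}-e_{n+1})$, this pins down $\mathfrak{a}\cap\mathfrak{h}$ and $\mathfrak{g}_{\delta}$, and Proposition \ref{proposition5} (uniqueness of the conformal class given these data) combined with the existence of Example \ref{e3} yields $G=\SL(n,\C)$, $M=M_{2}$ and conformal flatness; your extra remarks on the global identification via the developing map and $\pi_{1}\hookrightarrow\C^{*}$ only make explicit a step the paper leaves implicit. One small slip worth correcting: $\mathfrak{a}\cap\mathfrak{h}$ is \emph{not} equal to $H_{e_{1}-e_{n+1}}^{\perp}$, since $H_{\delta}$ lies in $\mathfrak{a}\cap\mathfrak{h}$ but $\operatorname{B}(H_{\delta},H_{\delta})=|\delta|^{2}\neq 0$; only your span description is correct, though this parenthetical plays no role in the argument.
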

\section{Case of parabolic isotropy} \label{Parabolic_Case}
In this part  we assume that the  Borel sub-algebra $\mathfrak{b}=\mathfrak{a}\oplus\mathfrak{g}_{+}$ is  contained in $\mathfrak{h}$. In this case, by \cite[Theorem 1.4]{CharlesMelnick}, $M$ is conformally flat (See  \cite[Proposition 3.3]{BDRZ1}). 

There is a sub-algebra $\mathfrak{l}$ of $\mathfrak{g}_{-}$ such that $\mathfrak{h}=\mathfrak{l}\oplus\mathfrak{a}\oplus\mathfrak{g}_{+}$. One can describe more precisely  the sub-algebra $\mathfrak{l}$. Indeed, since the root spaces are $1-$dimensional, $\mathfrak{g}_{+}\subset \mathfrak{h}$,  there is a subset $\Delta'$ of positive roots of $\Delta$ such that $\mathfrak{l}=\bigoplus_{\beta\in -\Delta'} \mathfrak{g}_{\beta}$ (see \cite[Section~5.7]{K}). Let  $\Pi$ be the standard basis of the canonical root system $\Delta$. By  \cite[Proposition~5.90]{K}), there is a subset $\Pi'$ of $\Pi$ such that $\Delta'=\operatorname{span}(\Pi')$.

\subsection{Maximality of the isotropy sub-algebra}
\begin{defi}
The parabolic sub-algebra $\mathfrak{h}$ is said to be maximal if $\vert\Pi'\vert=\vert\Pi\vert-1$.
\end{defi}

Let $\alpha$ be a simple root of $\Pi$ such that $\mathfrak{g}_{-\alpha}\nsubseteq \mathfrak{h}$ (note that this always exists since $M$ is not trivial). Then $\delta+\alpha$ is also a negative root such that $\mathfrak{g}_{\delta+\alpha}\nsubseteq \mathfrak{h}$. Actually we have more:
\begin{prop}
\label{propprop}
The negative root $\delta+\alpha$ is the minimal root.
\end{prop}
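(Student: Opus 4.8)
The plan is to show that $\delta+\alpha$, which we already know is a negative root with $\mathfrak{g}_{\delta+\alpha}\nsubseteq\mathfrak{h}$, cannot be "lowered" any further, i.e. that $(\delta+\alpha)-\gamma$ is never a root for any simple root $\gamma\in\Pi$. The key mechanism is the same one used in Section \ref{sect4}: if $\beta$ is a negative root with $\mathfrak{g}_\beta\nsubseteq\mathfrak{h}$, then $\delta-\beta$ must be a root (otherwise $\beta$ is paired with nothing on the other side, contradicting that $\mathfrak{g}_\beta$ is not in the kernel $\mathfrak{h}$ of $\langle\cdot,\cdot\rangle$), and moreover $\mathfrak{g}_{\delta-\beta}$ must lie in $\mathfrak{h}$ — for if $\mathfrak{g}_{\delta-\beta}\nsubseteq\mathfrak{h}$ too, then $\delta-\beta$ would be paired with $\beta$, so $\mathfrak{g}_{\delta}=[\mathfrak{g}_{\delta-\beta},\mathfrak{g}_{\beta}]\subset\mathfrak{h}\subset\mathfrak{p}$, contradicting Proposition \ref{Proposition1} (which gives $\mathfrak{g}_\delta\cap\mathfrak{p}=\{0\}$ in this parabolic setting since $\mathfrak{b}\subset\mathfrak{h}$ forces... — actually here $\delta$ is a sum of two roots, so one must argue instead via $\mathfrak{g}_\delta$ not being paired with $0$, or directly that $\delta$ is not itself a root in a way that keeps $\mathfrak{g}_{2\cdot(\text{something})}$ consistent; I will use the cleanest available contradiction).

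Concretely, first I would record the following lemma, valid in the current case $\mathfrak{b}\subset\mathfrak{h}$: \emph{if $\beta$ is a negative root with $\mathfrak{g}_\beta\nsubseteq\mathfrak{h}$, then $\delta-\beta$ is a negative root and $\mathfrak{g}_{\delta-\beta}\subset\mathfrak{h}$.} The first assertion is the pairing condition (every non-kernel weight space is paired with something, and the paired partner of $\beta$ must be $\delta-\beta$); it is negative because $\delta$ is negative and $\beta$ is negative while $\delta-\beta$ has height strictly between those of $\delta$ and... more carefully, since $\mathfrak{g}_+\subset\mathfrak{h}$ one checks $\delta-\beta\notin\Delta^+\cup\{0\}$. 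The second assertion: if not, $\delta-\beta$ is also paired only with $\beta$, and then $\mathfrak{g}_\delta=[\mathfrak{g}_\beta,\mathfrak{g}_{\delta-\beta}]$ (by \cite[Corollary~2.35]{K}, as the bracket of the two root spaces whose weights sum to the root $\delta$) would be contained in $\mathfrak{h}$, hence in $\mathfrak{p}$; but writing $\delta=\delta_1+\delta_2$ with $\delta_i$ roots and tracking which of $\mathfrak{g}_{\delta_i}$ lie in $\mathfrak{h}$, combined with $\mathfrak{b}\subset\mathfrak{h}$, produces the contradiction already exploited in Proposition \ref{Prop2} and the $\SL$-case — I would phrase it as: $\mathfrak{g}_\delta\subset\mathfrak{h}$ together with the argument of Section \ref{sect4} shows $\mathfrak{a}\cap\mathfrak{h}$ or rather $\mathfrak{h}$ becomes too large, contradicting $\mathfrak{h}\subsetneq\mathfrak{g}$.

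With the lemma in hand, the proof of Proposition \ref{propprop} is a minimality-by-contradiction argument. Set $\beta_0=\delta+\alpha$; we know $\mathfrak{g}_{\beta_0}\nsubseteq\mathfrak{h}$. Suppose $\beta_0$ is not the minimal (lowest) root; then there is a simple root $\gamma$ with $\beta_0-\gamma$ a (negative) root. I would consider the $\gamma$-root string through $\beta_0$ and the two cases according to whether $\mathfrak{g}_{\beta_0-\gamma}\subset\mathfrak{h}$ or not. If $\mathfrak{g}_{\beta_0-\gamma}\nsubseteq\mathfrak{h}$, apply the lemma to $\beta=\beta_0-\gamma$: then $\delta-\beta=\delta-\beta_0+\gamma=\gamma-\alpha$ must be a root with $\mathfrak{g}_{\gamma-\alpha}\subset\mathfrak{h}$; since $\alpha,\gamma$ are simple, $\gamma-\alpha$ is a root only if $\gamma=\alpha$ — but then $\beta_0-\gamma=\delta$, and $\mathfrak{g}_\delta\nsubseteq\mathfrak{h}$ contradicts $\mathfrak{g}_\delta\subset\mathfrak{h}$ (which holds since $\delta=(\delta-\beta_0)+\beta_0$ forces, via the lemma applied to $\beta_0$, $\mathfrak{g}_{-\alpha}=\mathfrak{g}_{\delta-\beta_0}\subset\mathfrak{h}$, i.e. a contradiction with the very choice of $\alpha$, so this subcase is vacuous). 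If instead $\mathfrak{g}_{\beta_0-\gamma}\subset\mathfrak{h}$, then using that $\mathfrak{g}_\gamma\subset\mathfrak{g}_+\subset\mathfrak{h}$ and $\mathfrak{h}$ is an ideal of $\mathfrak{p}$ (hence closed under $\operatorname{ad}\mathfrak{g}_+$ and, more subtly, one must get $\mathfrak{g}_{-\gamma}$-action), I would instead push the other way: $\mathfrak{g}_{\beta_0}=[\mathfrak{g}_{\beta_0-\gamma},\mathfrak{g}_\gamma]\subset[\mathfrak{h},\mathfrak{h}]\subset\mathfrak{h}$ by \cite[Corollary~2.35]{K}, directly contradicting $\mathfrak{g}_{\beta_0}\nsubseteq\mathfrak{h}$.

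The main obstacle I anticipate is the bookkeeping in the case $\mathfrak{g}_{\beta_0-\gamma}\nsubseteq\mathfrak{h}$: one needs to be sure that "$\gamma-\alpha$ is not a root for distinct simple roots $\alpha,\gamma$" is being applied correctly and that the degenerate possibility $\gamma=\alpha$ genuinely collapses (it does, because it would force $\mathfrak{g}_{-\alpha}\subset\mathfrak{h}$, negating the defining property of $\alpha$). A secondary point of care is making the lemma's second assertion airtight in the present case where $\delta$ is a \emph{sum of two roots} rather than a root — the contradiction with Proposition \ref{Proposition1} used in the $\SL$ section must be replaced by the $\mathfrak{b}\subset\mathfrak{h}$ version, namely that $\mathfrak{g}_\delta\subset\mathfrak{h}$ propagates (via $\mathfrak{h}$ being an ideal of $\mathfrak{p}\supset\mathfrak{b}$ and the structure $\mathfrak{h}=\mathfrak{l}\oplus\mathfrak{a}\oplus\mathfrak{g}_+$ with $\mathfrak{l}=\bigoplus_{\beta\in-\Delta'}\mathfrak{g}_\beta$) to force $-\delta\in\Delta'=\operatorname{span}(\Pi')$, which together with the earlier constraints on $\Delta'$ yields $\mathfrak{h}=\mathfrak{g}$, impossible. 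Once these two verifications are pinned down the proposition follows immediately.
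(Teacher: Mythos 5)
Your concluding case analysis is essentially correct and runs on the same engine as the paper's proof (pairing forces $\delta-(\beta_0-\gamma)=\gamma-\alpha$ to be a root, which is impossible), but the auxiliary lemma you set up on the way is false in its second assertion, and this needs to be said plainly. If $\beta$ is a negative root with $\mathfrak{g}_{\beta}\nsubseteq\mathfrak{h}$, then $\mathfrak{g}_{\beta}$ is not in the kernel, so it is paired with $\mathfrak{g}_{\delta-\beta}$; a space paired with something is itself \emph{not} in the kernel, so the correct conclusion is $\mathfrak{g}_{\delta-\beta}\nsubseteq\mathfrak{h}$ --- the opposite of what you claim. The statement you are transplanting from Section \ref{sect4} concerns a \emph{positive} root $\alpha$, where $\mathfrak{g}_{\alpha}\subset\mathfrak{g}_{+}\subset\mathfrak{h}$ lies in the kernel and therefore cannot serve as a pairing partner; that asymmetry is the whole point. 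You in fact notice the symptom yourself: applying your lemma to $\beta_0=\delta+\alpha$ yields $\mathfrak{g}_{-\alpha}\subset\mathfrak{h}$, contradicting the defining property of $\alpha$. That is not the subcase being ``vacuous''; it is your lemma being inconsistent with the setup. The attempted justification is also broken: $[\mathfrak{g}_{\beta},\mathfrak{g}_{\delta-\beta}]\subset\mathfrak{h}$ does not follow when neither factor lies in $\mathfrak{h}$.

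Fortunately, neither branch of your final argument actually uses the false assertion, so the proof survives once the lemma is excised. In the branch $\mathfrak{g}_{\beta_0-\gamma}\nsubseteq\mathfrak{h}$ you only need that the pairing partner $\gamma-\alpha$ be a root or $0$: the difference of two simple roots is never a root, and weight $0$ would pair $\mathfrak{g}_{\beta_0-\gamma}$ with $\mathfrak{a}\subset\mathfrak{h}$, which is in the kernel. In the branch $\mathfrak{g}_{\beta_0-\gamma}\subset\mathfrak{h}$, your bracket argument $\mathfrak{g}_{\beta_0}=[\mathfrak{g}_{\beta_0-\gamma},\mathfrak{g}_{\gamma}]\subset\mathfrak{h}$ is sound and is a clean, self-contained substitute for what the paper does implicitly, namely ruling this branch out via the description $\mathfrak{l}=\bigoplus_{\mu\in-\Delta'}\mathfrak{g}_{\mu}$ with $\Delta'$ spanned by $\Pi'$ (subtracting a positive root from $\delta+\alpha$ cannot bring $-(\delta+\alpha)$ back into $\operatorname{span}(\Pi')$, since the coefficient of the offending simple root only grows). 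The paper also allows an arbitrary positive root $\beta$ rather than a simple $\gamma$, which is immaterial for minimality. So: delete the lemma, keep the last paragraph, and state the $\gamma=\alpha$ subcase via the kernel property of $\mathfrak{a}$ rather than via the false assertion.
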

\begin{proof}
Assume by contradiction that there is a positive root $\beta$ such that $\delta+\alpha-\beta$ is  a  negative root. Thus $\mathfrak{g}_{\delta+\alpha-\beta}\nsubseteq \mathfrak{h}$ and hence $\delta-\left( \delta+\alpha-\beta \right)=\beta-\alpha $ is also a negative root which is impossible.
\end{proof}

As a consequence we get:
\begin{cor}
\label{cococoor}
The parabolic sub-algebra $\mathfrak{h}$ is  maximal.
\end{cor}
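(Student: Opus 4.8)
The plan is to deduce Corollary \ref{cococoor} directly from Proposition \ref{propprop}, using the classification of minimal roots of simple Lie algebras together with the description $\mathfrak{l}=\bigoplus_{\beta\in-\Delta'}\mathfrak{g}_\beta$ with $\Delta'=\operatorname{span}(\Pi')$. Recall that we have already reduced (Proposition \ref{Prop2}, and the present section's standing hypothesis) to the case where, up to modification, $\mathfrak{g}$ is simple (the case of a sum of two rank-one simple algebras is trivially handled, since then $|\Pi|=2$ and $|\Pi'|=1$ automatically once $M$ is nontrivial). So assume $\mathfrak{g}$ simple with standard basis $\Pi$ and canonical order.

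First I would record the key identity governing $\Pi'$: a simple root $\gamma\in\Pi$ lies in $\Pi'$ if and only if $\mathfrak{g}_{-\gamma}\subset\mathfrak{h}$, i.e.\ if and only if $\gamma\notin\Pi'$ then $\mathfrak{g}_{-\gamma}\nsubseteq\mathfrak{h}$, and in that case $\delta+\gamma$ is a negative root (since $-\gamma$ is paired with nothing inside $\mathfrak{h}$, the argument preceding Proposition \ref{propprop} shows $\delta+\gamma$ is a root, and it is negative because $\gamma$ is simple and $\delta$ is very negative). By Proposition \ref{propprop}, for \emph{every} such $\gamma\notin\Pi'$ the root $\delta+\gamma$ equals the (unique) minimal root $\mu$ of $\Delta$. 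Hence if two distinct simple roots $\gamma_1,\gamma_2$ both lay outside $\Pi'$, we would get $\delta+\gamma_1=\mu=\delta+\gamma_2$, forcing $\gamma_1=\gamma_2$, a contradiction. Therefore at most one simple root lies outside $\Pi'$, i.e.\ $|\Pi'|\geq|\Pi|-1$. Since $\mathfrak{h}\neq\mathfrak{g}$ (as $M$ is a nontrivial homogeneous space — Proposition on $\mathfrak{p}\subsetneq\mathfrak{g}$, or simply positivity of $\dim M$), not all of $\Pi$ can lie in $\Pi'$, so $|\Pi'|\leq|\Pi|-1$. Combining, $|\Pi'|=|\Pi|-1$, which is exactly the definition of maximality of $\mathfrak{h}$.

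The one point requiring a little care — and the main (minor) obstacle — is the very first sentence of Proposition \ref{propprop}'s hypothesis transported here: one must be sure that a simple root $\alpha$ with $\mathfrak{g}_{-\alpha}\nsubseteq\mathfrak{h}$ genuinely exists, equivalently that $\Pi'\neq\Pi$. This is immediate from $\mathfrak{l}\neq\mathfrak{g}_-$ (otherwise $\mathfrak{h}=\mathfrak{g}$), since $\mathfrak{l}$ is generated as a Lie algebra by the $\mathfrak{g}_{-\beta}$ with $\beta\in\Pi'$; if every simple negative root were in $\mathfrak{l}$ then $\mathfrak{l}=\mathfrak{g}_-$. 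With that in hand the argument above is complete, so I would simply write: \emph{By the remark preceding Proposition \ref{propprop} there is at least one $\gamma\in\Pi$ with $\mathfrak{g}_{-\gamma}\nsubseteq\mathfrak{h}$, so $|\Pi'|\leq|\Pi|-1$; by Proposition \ref{propprop}, any such $\gamma$ satisfies $\delta+\gamma=\mu$, the minimal root, and since $\mu$ and $\delta$ are fixed this determines $\gamma$ uniquely, whence $|\Pi'|\geq|\Pi|-1$. Thus $|\Pi'|=|\Pi|-1$ and $\mathfrak{h}$ is maximal.}
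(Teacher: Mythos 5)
Your argument is correct and is essentially the paper's own proof: both rest on Proposition \ref{propprop} plus uniqueness of the minimal root to conclude that two distinct simple roots $\gamma_1,\gamma_2\notin\Pi'$ would force $\delta+\gamma_1=\delta+\gamma_2$, hence $\gamma_1=\gamma_2$, while the existence of at least one such simple root (so $|\Pi'|\leq|\Pi|-1$) follows from nontriviality of $M$, exactly as the paper notes just before Proposition \ref{propprop}. The only quibble is your parenthetical that in the $A_1\times A_1$ case one has $|\Pi'|=1$ ``automatically'': there $\mathfrak{h}$ can be (and in the paper's later analysis is) a product of Borel subalgebras, so $\Pi'=\emptyset$; but this aside does not affect the main argument, which concerns the irreducible case where the minimal root is unique.
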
 
\begin{proof}
Assume that there are two simple roots $\alpha_{1},\alpha_{2}\in \Pi\backslash\Pi'$. By Proposition \ref{propprop} both $\delta+\alpha_{1}$ and $\delta+\alpha_{2}$ are minimal roots of $\Delta$. By uniqueness $\delta+\alpha_{1}=\delta+\alpha_{2}$ and hence $\alpha_{1}=\alpha_{2}$.
\end{proof}
\begin{Remark}
Note that so far, we did not impose any restriction on the rank of $\mathfrak{g}$ and thus Corollary \ref{cococoor} remains valid for lower rank semi-simple algebras.
\end{Remark}

\subsection{Higher rank  parabolic case}
We assume that, after modification, the Lie algebra $\mathfrak{g}$ is  of $\operatorname{rank}(\mathfrak{g})\geq 3$. Thus by Proposition \ref{Prop2} it is simple.
\subsubsection{Elimination of cases: first step toward classification}
Let $\alpha$ be the unique simple root of $\Pi\backslash\Pi'$. Then 
using Proposition \ref{propprop} we obtain:
\begin{prop}
\label{Propprosprops}
The simple Lie algebra $\mathfrak{g}$ is of non exceptional type. 
\end{prop}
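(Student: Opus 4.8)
The plan is to exploit Proposition \ref{propprop}: for the unique simple root $\alpha \in \Pi \setminus \Pi'$, the root $\delta + \alpha$ is the minimal (lowest) root of $\Delta$, so $-(\delta+\alpha) = \widetilde\alpha$ is the highest root, and therefore $\delta = -\widetilde\alpha - \alpha$. The strategy is then to run through each exceptional type $E_6, E_7, E_8, F_4, G_2$ with its canonical highest root $\widetilde\alpha$ written in the basis of $\mathbb{R}^n$, and for each choice of simple root $\alpha$, derive a contradiction with Proposition \ref{Proposition1}, namely that $\mathfrak{a}\cap\mathfrak{h}$ has codimension one in $\mathfrak{a}$ (equivalently $H_\alpha^\perp = \mathfrak{a}\cap\mathfrak{h}$, so $\mathfrak{a} \cap \mathfrak{h}$ cannot be all of $\mathfrak{a}$).

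The mechanism producing the contradiction is the one already used in the proof that the only possible type is $A_n$ in Section \ref{sect4} (and again in Corollary \ref{cococoor}): whenever $\beta$ is a positive root with $\delta + \beta$ \emph{not} a root, one must have $\mathfrak{g}_{-\beta} \subset \mathfrak{h}$ (otherwise $\delta - (\delta+\beta)$... more precisely $\mathfrak{g}_{-\beta}\not\subset\mathfrak h$ forces $\delta+\beta$ to be a root by the pairing analysis); and if moreover $-\beta$ is orthogonal to $\delta$, Lemma \ref{Lemm2}(3) combined with $\mathfrak{g}_{-\beta}\subset\mathfrak h$ gives $H_\beta \in \mathfrak{a}\cap\mathfrak{h}$. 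Since $\delta + \alpha$ is minimal, $\delta + \beta$ fails to be a root for \emph{most} positive roots $\beta$ (indeed $\delta + \beta$ is a root only when $\beta = \alpha$ or when $\beta$ lies in a short string), so one collects many such $H_\beta$; if together with $H_\delta \in \mathfrak{a}\cap\mathfrak{h}$ these span all of $\mathfrak{a}$, we contradict Proposition \ref{Proposition1}. First I would make the simplest observation: $H_\delta \in \mathfrak{a}\cap\mathfrak{h}$ always (shown in Section \ref{sect3}), and for each simple root $\alpha_i \neq \alpha$ one checks $\delta + \alpha_i$ is not a root — immediate from minimality of $\delta+\alpha$, since $\delta + \alpha_i = (\delta+\alpha) + (\alpha_i - \alpha)$ and $\alpha_i - \alpha$ is not a positive root — hence $\mathfrak{g}_{-\alpha_i}\subset\mathfrak h$ and $H_{\alpha_i}\in\mathfrak{a}\cap\mathfrak h$. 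That already yields $\{H_{\alpha_i} : \alpha_i \in \Pi, \alpha_i \neq \alpha\} \cup \{H_\delta\} \subset \mathfrak{a}\cap\mathfrak{h}$, which is $\operatorname{rank}(\mathfrak g)$ many vectors; I would then show they are linearly independent for every exceptional type and every choice of $\alpha$, forcing $\mathfrak{a}\cap\mathfrak{h} = \mathfrak{a}$, the desired contradiction. Linear independence follows because $\{\alpha_i : i \neq i_0\}$ is already independent and $H_\delta = -H_{\widetilde\alpha} - H_\alpha$ has a nonzero component along $H_\alpha = H_{\alpha_{i_0}}$ in the basis $\{H_{\alpha_1},\dots,H_{\alpha_\ell}\}$ — i.e. the coefficient of $\alpha_{i_0}$ in the expansion of $\widetilde\alpha + \alpha_{i_0}$ into simple roots is at least $2$, hence nonzero — which holds in every exceptional type since the highest root has coefficient $\geq 1$ on every simple root and one adds another copy of $\alpha_{i_0}$.

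The main obstacle is purely bookkeeping: one must verify, type by type and node by node of the Dynkin diagram, that the coefficient argument never degenerates — in particular that $\delta + \alpha = -\widetilde\alpha$ really forces $\delta$ outside $\mathfrak p$-compatible constraints, and that no exotic cancellation makes the $\operatorname{rank}(\mathfrak g)$ vectors dependent. I expect this to be routine for $E_6, E_7, E_8, F_4$, where highest-root coefficients are well-tabulated (all $\geq 1$, many $\geq 2$), and to require only a direct check for $G_2$ (highest root $2\alpha_1 + 3\alpha_2$ or $3\alpha_1 + 2\alpha_2$ depending on convention, both with a coefficient $\geq 2$). Thus in every exceptional case $\mathfrak{a}\cap\mathfrak{h} = \mathfrak{a}$, contradicting Proposition \ref{Proposition1}, so $\mathfrak g$ is of non-exceptional type.
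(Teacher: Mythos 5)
Your proposal rests on a contradiction that is not available in this part of the paper. Proposition \ref{Propprosprops} lives in Section \ref{Parabolic_Case}, where by hypothesis the Borel sub-algebra $\mathfrak{a}\oplus\mathfrak{g}_{+}$ is contained in $\mathfrak{h}$; in particular $\mathfrak{a}\subset\mathfrak{h}$, so $\mathfrak{a}\cap\mathfrak{h}=\mathfrak{a}$ is true from the outset and contradicts nothing. The codimension-one statement $\mathfrak{a}\cap\mathfrak{h}=H_{\alpha}^{\perp}$ that you invoke is part (2) of Proposition \ref{Proposition1}, whose hypothesis is precisely $\mathfrak{a}\nsubseteq\mathfrak{h}$ (the setting of Sections \ref{sect3} and \ref{sect4}, which is where the spanning argument you are imitating actually takes place). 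Likewise Lemma \ref{Lemm2} is proved under ``Case one: $\mathfrak{a}\nsubseteq\mathfrak{h}$'' and its conclusion $H_{\beta}\in\mathfrak{a}\cap\mathfrak{h}$ is vacuous here, since every element of $\mathfrak{a}$ lies in $\mathfrak{h}$. So the collection of vectors $H_{\alpha_i}$, $H_{\delta}$ spanning $\mathfrak{a}$ yields no information, and your argument proves nothing in any of the exceptional types.

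The correct mechanism is the one you mention only in passing and then abandon: in the parabolic case the pairing condition forces, for every positive root $\beta$ with $\mathfrak{g}_{-\beta}\nsubseteq\mathfrak{h}$ (equivalently, $\beta$ has nonzero coefficient on the distinguished simple root $\alpha$), that $\delta+\beta$ be a root. Combined with Proposition \ref{propprop}, which pins down $\delta=-\widetilde\alpha-\alpha$ where $\widetilde\alpha$ is the highest root, this is a strong constraint. The paper's proof runs through each exceptional type and each admissible choice of $\alpha$, exhibits one explicit positive root $\beta\notin\operatorname{span}(\Pi')$ for which $\delta+\beta$ is not a root, and concludes. If you want to repair your write-up, you should replace the $H_{\beta}$-spanning argument entirely by this root-string check; the identification $\delta=-\widetilde\alpha-\alpha$ in your first paragraph is correct and is the right starting point.
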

\begin{proof}
Assume the converse, we now  distinguish several cases depending on the type of $\mathfrak{g}$:
\begin{enumerate}
\item \textbf{If $\mathfrak{g}$ is of type $E_{6}$.} Here  $\delta+\alpha=-\frac{1}{2}\left( e_{8}-e_{7}-e_{6}+e_{5}+e_{4}+e_{3}+e_{2}+e_{1}\right)$. Therefore:
\begin{enumerate}
\item If $\alpha=\alpha_{1}$ then $\delta=-\frac{1}{2}\left( e_{8}-e_{7}-e_{6}+e_{5}+e_{4}+e_{3}+e_{2}+e_{1}\right)-\alpha_{1}$. We have $\mathfrak{g}_{-\left( \alpha_{1}+ e_{2}-e_{1}\right) }\nsubseteq \mathfrak{h}$. However, $\delta+\alpha_{1}+ e_{2}-e_{1}$ is not a root leading to a contradiction.
\item If $\alpha=\alpha_{2}$ then $\delta=-\frac{1}{2}\left( e_{8}-e_{7}-e_{6}+e_{5}+e_{4}+e_{3}+e_{2}+e_{1}\right)-\alpha_{2}$. We have $\mathfrak{g}_{-\left( \alpha_{2}+ e_{3}-e_{2}\right) }\nsubseteq \mathfrak{h}$. But $\delta+\alpha_{2}+ e_{3}-e_{2}$ is not a root leading to a contradiction.
\item If $\alpha=e_{k+1}-e_{k}$ then $\delta=-\frac{1}{2}\left( e_{8}-e_{7}-e_{6}+e_{5}+e_{4}+e_{3}+e_{2}+e_{1}\right)-\left(e_{k+1}-e_{k} \right)$. For $1<k\leq 4$, we have $\mathfrak{g}_{-\left( e_{k+1}-e_{k-1}\right) }\nsubseteq \mathfrak{h}$. However  $\delta+\left( e_{k+1}-e_{k-1} \right) $ leading to a contradiction. For $k=1$, we have $\mathfrak{g}_{-\left( e_{3}-e_{1}\right) }\nsubseteq \mathfrak{h}$. But $\delta+\left( e_{3}-e_{1} \right) $ is also not a root since the coefficient of $e_{2}$ is $-\frac{3}{2}$ so we obtain a contradiction.
\end{enumerate}
\item \textbf{If $\mathfrak{g}$ is of type $E_{7}$.} Here  $\delta+\alpha=-\left( e_{8}-e_{7}\right)$. Thus:
\begin{enumerate}
\item If $\alpha=\alpha_{1}$ then $\delta=-\left( e_{8}-e_{7}\right)-\alpha_{1}$. We have $\mathfrak{g}_{-\left( \alpha_{1}+ e_{3}+e_{2}\right) }\nsubseteq \mathfrak{h}$. But $\delta+\left(\alpha_{1}+ e_{3}+e_{2} \right)=\left(e_{3}+e_{2} \right)-\left(e_{8}-e_{7} \right)$ is not a root leading to a contradiction.
\item If $\alpha=\alpha_{2}$ then $\delta=-\left( e_{8}-e_{7}\right)-\alpha_{2}$. We have $\mathfrak{g}_{-\left( \alpha_{2}+ e_{3}-e_{2}\right) }\nsubseteq \mathfrak{h}$. But $\delta+\left(\alpha_{2}+ e_{3}-e_{2} \right)=\left(e_{3}-e_{2} \right)-\left(e_{8}-e_{7} \right)$ is not a root. So we get a contradiction. 
\item If $\alpha=\alpha_{i}$ with $i>3$ then $\delta=-\left( e_{8}-e_{7}\right)-\alpha_{i}$.  We have $\mathfrak{g}_{-\left( \alpha_{i}+ e_{i-2}+e_{1}\right) }\nsubseteq \mathfrak{h}$. However, $\delta+\left(\alpha_{i}+ e_{i-2}+e_{1} \right)=\left(e_{i-2}+e_{1} \right)-\left(e_{8}-e_{7} \right)$ is not a root leading again to a contradiction.
\item If $\alpha=\alpha_{3}$ then $\delta=-\left( e_{8}-e_{7}\right)-\alpha_{3}$. But $\delta+\left(\alpha_{3}+ e_{3}-e_{2} \right)=\left(e_{3}-e_{2} \right)-\left(e_{8}-e_{7} \right)$ is not a root which contradicts the fact that $\mathfrak{g}_{-\left( \alpha_{3}+ e_{3}-e_{2}\right)}\nsubseteq \mathfrak{h}$.
\end{enumerate}
\item \textbf{If $\mathfrak{g}$ is of type $E_{8}$.} Here $\delta+\alpha=-\frac{1}{2}\left( e_{8}+e_{7}+e_{6}+e_{5}+e_{4}+e_{3}+e_{2}+e_{1}\right)$ and exactly the same proof as for the $E_{6}$ type works.  
\item \textbf{If $\mathfrak{g}$ is of type $F_{4}$.} Here $\delta+\alpha=-\left( e_{1}+e_{2}\right) $. Thus
\begin{enumerate}
\item If $\alpha=\alpha_{1}$ then $\delta=-\alpha_{1}-\left( e_{1}+e_{2}\right)$. We have $\mathfrak{g}_{-\left( \alpha_{1}+ e_{2}+e_{3}\right)}\nsubseteq \mathfrak{h}$. But $\delta+\left(\alpha_{1}+e_{2}+e_{3}\right) $ is not a root leading to a contradiction.
\item If $\alpha=\alpha_{2}$ then $\delta=-\alpha_{2}-\left( e_{1}+e_{2}\right)$. We have $\mathfrak{g}_{-\left( \alpha_{2}+ e_{3}-e_{4}\right)}\nsubseteq \mathfrak{h}$. But $\delta+\left(\alpha_{2}+e_{3}-e_{4}\right) $ is not a root leading to a contradiction.
\item If $\alpha=\alpha_{3}$ then $\delta=-\alpha_{3}-\left( e_{1}+e_{2}\right)$. We have $\mathfrak{g}_{-\left( \alpha_{3}+ e_{4}\right)}\nsubseteq \mathfrak{h}$. However, $\delta+\left(\alpha_{3}+e_{4}\right) $ is not a root leading to a contradiction.
\item If $\alpha=\alpha_{4}$ then $\delta=-\alpha_{4}-\left( e_{1}+e_{2}\right)$. We have $\mathfrak{g}_{-\left( \alpha_{4}+ e_{1}-e_{2}\right)}\nsubseteq \mathfrak{h}$. However, $\delta+\left(\alpha_{4}+e_{1}-e_{2}\right) $ is not a root leading to a contradiction.
\end{enumerate}
\end{enumerate}
\end{proof}
This leads us to the following initial classification of $\mathfrak{g}$:
\begin{prop}
\label{poupoup}
The simple Lie algebra $\mathfrak{g}$ is of type:
\begin{enumerate}
\item $B_{3}$ with $\alpha=e_{3}$ and $\delta=-\left( e_{1}+e_{2}+e_{3}\right) $ or;
\item  $D_{4}$ with $\alpha=e_{3}+e_{4}$ and $\delta=-\left( e_{1}+e_{2}+e_{3}+e_{4}\right)$ or;
\item $D_{4}$ with $\alpha=e_{3}-e_{4}$ and $\delta=-\left( e_{1}+e_{2}+e_{3}-e_{4}\right)$ or;
\item  $B_{n}$ with $n\geq 3$ and $\alpha=e_{1}-e_{2}$ and $\delta=-2e_{1}$ or;
\item $D_{n}$ with $n\geq 3$ and $\alpha=e_{1}-e_{2}$ and $\delta=-2e_{1}$.
\end{enumerate}
\end{prop}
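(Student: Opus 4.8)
Everything is essentially in place: by Corollary~\ref{cococoor} the parabolic $\mathfrak{h}$ is maximal, so there is a \emph{unique} simple root $\alpha\in\Pi\setminus\Pi'$, and $\mathfrak{h}=\mathfrak{l}\oplus\mathfrak{a}\oplus\mathfrak{g}_{+}$ with $\mathfrak{l}$ spanned by the $\mathfrak{g}_{-\gamma}$ for the positive roots $\gamma$ not involving $\alpha$; by Proposition~\ref{Propprosprops} the type of $\mathfrak{g}$ is one of $A_{n},B_{n},C_{n},D_{n}$; and by Proposition~\ref{propprop} the root $\delta+\alpha$ is the lowest root, so $\delta=-\theta-\alpha$ with $\theta$ the highest root. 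The single mechanism I would use throughout is the following \emph{pairing constraint}: if $\beta$ is a negative root whose expansion in simple roots has non-zero $\alpha$-coefficient, then $\mathfrak{g}_{\beta}\not\subset\mathfrak{h}$, hence $\mathfrak{g}_{\beta}$ is paired with $\mathfrak{g}_{\delta-\beta}$, and $\delta-\beta$ must be a root (it cannot be $0$, since $\mathfrak{a}\subset\mathfrak{h}$ is the whole kernel of $\langle\cdot,\cdot\rangle$). The proof is then a march through the four classical families in which, for every node $\alpha$ not on the announced list, I exhibit one such $\beta$ with $\delta-\beta$ visibly not a root.

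In the coordinates of \cite[Appendix~C]{K} one has $\theta=e_{1}-e_{n+1}$ ($A_{n}$), $\theta=e_{1}+e_{2}$ ($B_{n}$ and $D_{n}$) and $\theta=2e_{1}$ ($C_{n}$), so $\delta$ is explicit once $\alpha$ is picked. For $A_{n}$ with $\alpha=\alpha_{k}=e_{k}-e_{k+1}$ I take $\beta=-(e_{1}-e_{j})$ with $k+1<j\le n$; then $\delta-\beta$ has four non-zero coordinates, hence is not a root, ruling out every $k\le n-2$, and the symmetry $\alpha_{k}\leftrightarrow\alpha_{n+1-k}$ of the diagram rules out every $k\ge 3$ too. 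For $n\ge 4$ this eliminates $A_{n}$ completely; for $n=3$ only the central node $\alpha_{2}$ passes the test, and there $\mathfrak{g}\cong\mathfrak{so}(6,\C)$ is of type $D_{3}$, returning us to item~(5) with $n=3$ (just as $B_{2}=C_{2}$ is used in Section~\ref{sect3}). For $C_{n}$ the presence of $\pm2e_{1}$ forces $\delta=-2e_{1}-\alpha$, and the witness $\beta=-2e_{2}$ when $\alpha\neq\alpha_{1}$, $\beta=-(e_{1}-e_{3})$ when $\alpha=\alpha_{1}$, always produces a $\delta-\beta$ carrying a $\pm2$ coordinate together with further non-zero coordinates, hence never a root; so $C_{n}$ cannot occur for $n\ge 3$.

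For $B_{n}$ and $D_{n}$ we have $\delta=-e_{1}-e_{2}-\alpha$. If $\alpha=\alpha_{1}=e_{1}-e_{2}$ then $\delta=-2e_{1}$, and since the negative roots with non-zero $\alpha_{1}$-coefficient are exactly those of shape $e_{j}-e_{1}$, $-e_{1}-e_{j}$ ($j\ge 2$) and, in type $B_{n}$, $-e_{1}$, a direct check gives $\delta-\beta$ a root in every case: this is the conformally flat Einstein situation, items~(4) and~(5). For any interior node $\alpha_{k}$ ($2\le k$) a short explicit witness (of the form $e_{i}-e_{1}$, or $-e_{1}$ in type $B_{n}$) makes $\delta-\beta$ fail to be a root — typically because it acquires three or four non-zero coordinates, or is of the excluded form $\pm2e_{i}$ — and the same kind of witness kills the two fork nodes of $D_{n}$ once $n\ge 5$ and the short node $\alpha_{n}=e_{n}$ of $B_{n}$ once $n\ge 4$. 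What is left is precisely the small-rank degeneracies where such witnesses collapse: the short node of $B_{3}$ (item~(1)) and the fork nodes $e_{3}\pm e_{4}$ of $D_{4}$ (items~(2)--(3)), where one verifies by a finite computation that the pairing constraint holds for \emph{all} admissible $\beta$ (and the triality of $D_{4}$ relates~(2)--(3) back to the case $\alpha=\alpha_{1}$). Gathering the five survivors yields the statement.

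The last paragraph is where the real work — and the only genuine subtlety — lies. Eliminating all interior, fork and short nodes of $B_{n}$ and $D_{n}$ above the exceptional ranks is finite but a little long. More delicate is the \emph{positive} part: one must confirm that the five candidates withstand \emph{every} admissible $\beta$, so it is not enough to test a sample — one genuinely needs to list all negative roots with non-zero $\alpha$-coefficient. Equivalently, the constraint reads ``$\theta+\alpha-\gamma$ is a root for every positive root $\gamma$ with non-zero $\alpha$-coefficient'', which happens to hold automatically in the rigid configurations where $-\delta=\theta+\alpha$ splits as a sum of two roots with very short mutual root strings; these are exactly items~(1)--(5), which is ultimately what singles them out. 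Finally, the cases $B_{3}$ and $D_{4}$ deserve to be flagged from the start, as in the statement, because there the generic ``support of size $\ge 3$'' obstruction degenerates.
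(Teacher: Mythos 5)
Your strategy is exactly the paper's: by Proposition \ref{propprop}, $\delta=-\theta-\alpha$ with $\theta$ the highest root, and for every positive root $\gamma$ with non-zero $\alpha$-coefficient $\delta+\gamma$ must be a (negative) root; one then hunts, type by type, for a $\gamma$ violating this. Your treatment of $A_{n}$ (witness $e_{1}-e_{j}$ with $j\ge k+2$, plus the diagram symmetry, leaving only the central node of $A_{3}=D_{3}$) and of $C_{n}$ (witnesses $2e_{2}$ and $e_{1}-e_{3}$) is correct and agrees in substance with the paper's computations.

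The gap is in the $B_{n}$/$D_{n}$ elimination, where the witnesses you actually name do not do the job. For the short node $\alpha=e_{n}$ of $B_{n}$ ($n\ge4$) you propose $-e_{1}$, but $\delta+e_{1}=-(e_{1}+e_{2})-e_{n}+e_{1}=-(e_{2}+e_{n})$ \emph{is} a root, so no contradiction arises; and $e_{1}-e_{i}$ has zero $\alpha_{n}$-coefficient, hence is not an admissible witness at all. For the fork node $\alpha=e_{n-1}-e_{n}$ of $D_{n}$ ($n\ge5$) the only admissible witness of your shape is $e_{1}-e_{n}$, and $\delta+(e_{1}-e_{n})=-(e_{2}+e_{n-1})$ is again a root; the same failure occurs for $\alpha=e_{n-1}+e_{n}$ with $e_{1}+e_{n}$. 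Even for an interior node $\alpha_{k}$ with $3\le k\le n-2$ the natural choice $e_{1}-e_{k+1}$ gives $\delta+(e_{1}-e_{k+1})=-(e_{2}+e_{k})$, a root, so one must take $e_{1}-e_{i}$ with $i\ge k+2$. In other words, precisely the cases you describe as handled by ``the same kind of witness'' are those where that witness yields a root and proves nothing; working witnesses are, e.g., $e_{k}+e_{n-1}$ for interior nodes, $e_{3}+e_{n}$ for the short node of $B_{n}$ and the fork node $e_{n-1}+e_{n}$, and $e_{n-2}-e_{n}$ for the fork node $e_{n-1}-e_{n}$ --- which is what the paper uses. A minor further remark: the ``positive part'' you single out as the main subtlety is not needed for this proposition, which only asserts that $\mathfrak{g}$ \emph{must} appear on the list (indeed items (1)--(3) are discarded afterwards in Proposition \ref{Propparabolic}); all the weight of the proof lies on the eliminations above.
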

\begin{proof}
For this we distinguish several cases depending on the type of $\mathfrak{g}$. By Proposition \ref{Propprosprops} it is sufficient to consider the non exceptional types: 
\begin{enumerate}
\item \textbf{If $\mathfrak{g}$ is of type $B_{n}$.} Here $\delta+\alpha=-\left( e_{1}+e_{2}\right)$. Thus:
\begin{enumerate}
\item If $\alpha=e_{k}-e_{k+1}$ with  $k\geq 2$. Since $e_{k}$ is a positive root such that $\mathfrak{g}_{-e_{k}}\nsubseteq \mathfrak{h}$ we would then have  $\delta+e_{k}=-\left( e_{1}+e_{2}-e_{k+1}\right) $ is a negative root which is clearly not true;
\item If $\alpha=e_{1}-e_{2}$. In this case $\delta=-2e_{1}$. 
\item If $\alpha=e_{n}$ with $n>3$. Since $e_{n}+e_{3}$ is a positive root such that $\mathfrak{g}_{-\left(e_{n}+e_{3} \right) }\nsubseteq \mathfrak{h}$ we would then have $\delta+e_{n}+e_{3}=-\left( e_{1}+e_{2}-e_{3}\right) $ is a negative root which is clearly not true;
\item If $n=3$ and $\alpha=e_{3}$. In this case $\delta=-\left( e_{1}+e_{2}+e_{3}\right) $. 
\end{enumerate}
\item \textbf{If $\mathfrak{g}$ is of type $C_{n}$.}  Here  $\delta+\alpha=-2e_{1}$. Thus:
\begin{enumerate}
\item If $\alpha=e_{k}-e_{k+1}$ then  $\mathfrak{g}_{-\left( e_{k}+e_{n}\right) }\nsubseteq \mathfrak{h}$. This implies that $\delta+e_{k}+e_{n}=-\left(2 e_{1}-e_{k+1}-e_{n}\right) $ is a negative root which is clearly not true;
\item If $\alpha=2e_{n}$ then $\mathfrak{g}_{-\left( e_{n-1}+e_{n}\right) }\nsubseteq \mathfrak{h}$. This implies that $\delta+e_{n-1}+e_{n}=-\left(2 e_{1}+e_{n}-e_{n-1}\right) $ is a negative root which is clearly not true.
\end{enumerate}
\item \textbf{If $\mathfrak{g}$ is of type $D_{n}$.} Here again $\delta+\alpha=-\left( e_{1}+e_{2}\right)$. Thus:
\begin{enumerate}
\item If $\alpha=e_{k}-e_{k+1}$ with  $2\leq k\leq n-2$. Then  $\mathfrak{g}_{-\left( e_{k}+e_{n-1}\right) }\nsubseteq \mathfrak{h}$. This implies  that $\delta+e_{k}+e_{n-1}=-\left( e_{1}+e_{2}-e_{k+1}-e_{n-1}\right) $ is a negative root which is clearly not true;
\item If $\alpha=e_{n-1}-e_{n}$ and $n\neq 4$. Then $\mathfrak{g}_{-\left( e_{n-2}-e_{n}\right) }\nsubseteq \mathfrak{h}$. This implies  that $\delta+e_{n-2}-e_{n}=-\left( e_{1}+e_{2}+e_{n-1}-e_{n-2}\right) $ is a negative root which is clearly not true;
\item If $n=4$ and $\alpha=e_{3}-e_{4}$ then $\delta=-\left( e_{1}+e_{2}+e_{3}-e_{4}\right)$.
\item If $\alpha=e_{1}-e_{2}$ then in this case $\delta=-2e_{1}$.
\item If $\alpha=e_{n-1}+e_{n}$ with $n\neq 4$. Then $\delta=-\left( e_{1}+e_{2}+e_{n-1}+e_{n}\right)$. But $\delta+\left(e_{3}+e_{n} \right) =-e_{1}-e_{2}-e_{n-1}+e_{3}$ is not a negative  root 
\item If $n=4$ and $\alpha=e_{3}+e_{4}$ then $\delta=-\left( e_{1}+e_{2}+e_{3}+e_{4}\right)$. 
\end{enumerate}
\item \textbf{If $\mathfrak{g}$ is of type $A_{n}$.} Here $\delta+\alpha=-\left( e_{1}-e_{n+1}\right) $ and $\alpha=e_{k}-e_{k+1}$.\\  
If $n \neq 3$ or $k=1,n$, then either $\mathfrak{g}_{-(e_{k-1}-e_{k+1})} \nsubseteq \mathfrak{h}$ or $\mathfrak{g}_{-(e_{k}-e_{k+2})} \nsubseteq \mathfrak{h}$. However, neither $\delta + (e_{k-1}-e_{k+1}) = -\left( e_{1}-e_{n+1}+e_{k}-e_{k+1}\right) $ nor $\delta + (e_{k-1}-e_{k+1}) = -\left( e_{1}-e_{n+1}+e_{k+1}-e_{k+2}\right) $ are negative roots.\\
If $n=3$ and $k=2$ then $\alpha=e_{2}-e_{3}$ so that $\delta=-\left( e_{2}-e_{3}\right) -\left(e_{1}-e_{4} \right)$. In this case $\mathfrak{g}_{e_{1}-e_{2}}\subset \mathfrak{h}$ and  $\mathfrak{g}_{e_{3}-e_{4}}\subset \mathfrak{h}$. But $A_{3}=D_{3}$ so we are in the last case.
\end{enumerate}
\end{proof}
\subsubsection{Recovering the Einstein space}
\label{Parabolic_Case1}
Using the fact that the nilpotent part of $\mathfrak{h}$ acts isometrically we  show:
\begin{prop}
\label{Propparabolic}
The simple Lie algebra $\mathfrak{g}$ is of type:
\begin{enumerate}
\item   $B_{n}$ with $n\geq 3$, $\alpha=e_{1}-e_{2}$ and $\delta=-2e_{1}$ or;
\item  $D_{n}$ with $n\geq 3$, $\alpha=e_{1}-e_{2}$ and $\delta=-2e_{1}$.
\end{enumerate}
\end{prop}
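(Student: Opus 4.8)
The task here is to eliminate cases (1), (2), (3) of Proposition~\ref{poupoup}, leaving only (4) and (5). The engine, as announced, is that the nilpotent part of $\mathfrak h$ acts by isometries of $\langle\cdot,\cdot\rangle$: with $\mathfrak h=\mathfrak l\oplus\mathfrak a\oplus\mathfrak g_{+}$ and $\mathfrak l=\bigoplus_{\beta\in-\Delta'}\mathfrak g_{\beta}$ (where $\Pi'=\Pi\setminus\{\alpha\}$ by Corollary~\ref{cococoor}), every root space $\mathfrak g_{\gamma}$ with $\gamma\in\Delta^{+}\cup(-\Delta')$ lies in $\mathfrak h$ and satisfies $\mathfrak g_{\gamma}=[\mathfrak a,\mathfrak g_{\gamma}]$, so $\delta$ kills it; hence Equation~\ref{equationsix} applies to every $p\in\mathfrak g_{\gamma}$ with $\gamma\in\Delta^{+}\cup(-\Delta')$. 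I would begin by recording this, together with the list, in each of the five cases, of the weights of $\mathfrak g/\mathfrak h$ (the negative roots with nonzero $\alpha$-coordinate) and of the pairs $\{\beta,\delta-\beta\}$ among them. Non-degeneracy of the induced conformal form on $\mathfrak g/\mathfrak h$ is then precisely the non-vanishing of all the pairing constants $P_{\beta}:=\langle u_{\beta},u_{\delta-\beta}\rangle$, and the plan is to contradict this in cases (1)--(3).

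The key computational identity is: if $\gamma$ is a root with $\mathfrak g_{\gamma}\subset\mathfrak h$ and $\beta,\beta'$ are weights of $\mathfrak g/\mathfrak h$ with $\beta+\beta'=\delta+\gamma$ and $\beta-\gamma,\beta'-\gamma\in\Delta$, then feeding $p=u_{\gamma}$, $u=u_{\beta}$, $v=u_{\beta'}$ into Equation~\ref{equationsix} yields
\[
k_{\gamma,\beta}\,P_{\beta-\gamma}+k_{\gamma,\beta'}\,P_{\beta'-\gamma}=0,
\]
a two-term relation between pairing constants with nonzero structure constants. For each of (1), (2), (3) I would run $\gamma$ over a small family of roots drawn from $\Delta^{+}$ and from $-\Delta'$, producing more such relations than there are pairing constants. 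For instance, in case (1) ($B_{3}$, $\alpha=e_{3}$) the pairs are $\{-e_{1},-(e_{2}+e_{3})\}$, $\{-e_{2},-(e_{1}+e_{3})\}$, $\{-e_{3},-(e_{1}+e_{2})\}$, and $\gamma=e_{1}-e_{2},\,e_{2}-e_{3},\,e_{1}-e_{3}$ give a triangle of relations among $P_{1},P_{2},P_{3}$; using the Jacobi identity to pin down the ratios of the constants $k_{\gamma,\beta}$ one checks that this system is non-degenerate, which forces $P_{1}=P_{2}=P_{3}=0$ and contradicts non-degeneracy. Cases (2) and (3) go the same way; alternatively, since the triality automorphism of $D_{4}$ cyclically permutes $e_{1}-e_{2}$, $e_{3}-e_{4}$, $e_{3}+e_{4}$, these two cases are isomorphic to case (5) with $n=4$ and may simply be absorbed there. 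What remains is exactly (4) and (5).

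I expect the one genuinely laborious point to be the verification that the triangle of relations in case (1) is non-degenerate -- equivalently, that the product of the relevant ratios of Chevalley constants around the cycle is not $1$. This is a finite but delicate bookkeeping in $\mathfrak{so}(7,\C)$, and it is the place where a purely representation-theoretic shortcut does not by itself settle the matter. By contrast, in cases (4) and (5) the relevant parabolic is $|1|$-graded, so $\mathfrak g/\mathfrak h=\mathfrak g_{-1}$ is an irreducible module for the Levi carrying a genuine invariant quadratic form -- the Einstein model $\Eins_{n}(\C)$ -- and no over-determined system of relations arises, which is why precisely those two cases survive.
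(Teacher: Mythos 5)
Your strategy here is the same as the paper's: feed root vectors from the nilpotent part of $\mathfrak h$ into Equation \ref{equationsix} to obtain two--term relations among the pairing constants, and close a cycle of three such relations hoping to force all of them to vanish. The genuine gap is that you defer the only step that decides anything --- whether the cycle is inconsistent --- and your own triality remark already shows that for cases (2) and (3) it cannot be. You cannot simultaneously assert that ``cases (2) and (3) go the same way'' (i.e.\ yield $P_\beta=0$) and that triality identifies them with case (5) for $n=4$: the parabolics $\mathfrak p_3,\mathfrak p_4$ of $D_4$ are carried to $\mathfrak p_1$ by outer automorphisms of $\mathfrak g$, these automorphisms transport the Einstein conformal class on $\mathfrak g/\mathfrak p_1$ (and its distortion $-2\omega_1=-2e_1$, which goes to $-2\omega_4=-(e_1+e_2+e_3+e_4)$, resp.\ $-2\omega_3$) to a nonzero invariant pairing on $\mathfrak g/\mathfrak p_4$, resp.\ $\mathfrak g/\mathfrak p_3$. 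Hence the cyclic system must close up consistently there, and a direct check confirms this: in the model of $\mathfrak{so}(8,\C)$ with $E_{e_i-e_j}=E_{ij}-E_{9-j,9-i}$ and $E_{-(e_i+e_j)}=E_{9-j,i}-E_{9-i,j}$ ($E_{ab}$ the elementary matrices), the three relations of the triangle read $A+B=0$, $B+C=0$, $C-A=0$, with the nonzero solution $(A,B,C)=(1,-1,1)$; the product of sign ratios around the cycle is $+1$, not $-1$. The same difficulty afflicts case (1): the $8$--dimensional spin representation $\mathrm{Spin}(7,\C)\to\SO(8,\C)$ is orthogonal, its closed orbit in $\P(\C^8)$ is the $6$--dimensional quadric $\QQ_6(\C)$ on which it therefore acts transitively and conformally, and the isotropy is exactly the parabolic $\mathfrak p_3$ of $B_3$ of your case (1). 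So that case also supports an invariant conformal structure and cannot be eliminated by these relations.

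Consequently the ``delicate bookkeeping'' you postpone is not a formality to be checked but the place where the claimed conclusion fails: cases (1)--(3) of Proposition \ref{poupoup} are not contradictions but additional realizations of $\Ein_6(\C)$ (by $\mathrm{Spin}(7,\C)$ via the spin representation, and by $\SO(8,\C)$ acting through a triality twist), all conformally flat. Your closing heuristic --- that only the $|1|$--graded cases (4), (5) can carry an invariant quadratic form --- is likewise not correct, precisely because of these spinorial exceptions. Note that this is not a defect of your write--up alone: the paper's own proof of this proposition runs the identical computation and reaches the opposite (inconsistent) sign in the third relation, so either the quoted structure constants or the statement itself needs to be revisited; a correct treatment should either absorb cases (1)--(3) into the Einstein model by exhibiting the automorphism/spin embedding explicitly, or restate the proposition so that it classifies $M$ (always $\Ein_6(\C)$, conformally flat) while allowing $G$ to be $\mathrm{Spin}(7,\C)$ or a triality twist of $\SO(8,\C)$.
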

\begin{proof} Following Proposition \ref{poupoup} all we need to prove is that case $(1)$, $(2)$ and $(3)$ are impossible.\\
\textbf{The $\mathfrak{so}(7,\mathbb{C})$ case.} We assume that $\mathfrak{g}$ is  $\mathfrak{so}(7,\mathbb{C})$. It is a complex simple Lie algebra of type $B_{3}$. Its standard root decomposition is described in  \cite[Pages~127-128]{K}. In particular the root spaces are given by, $\mathfrak{g}_{\alpha}=\mathbb{C}E_{\alpha}$. 

We assume that the sub-algebra $\mathfrak{h}$ is generated by $\mathfrak{a}$, $\mathfrak{g}_{+}$, $\mathfrak{g}_{e_{2}-e_{1}}$ and $\mathfrak{g}_{e_{3}-e_{2}}$. In this case $\mathfrak{g}/\mathfrak{h}\simeq \mathfrak{g}_{-e_{1}}\oplus \mathfrak{g}_{-e_{2}}\oplus\mathfrak{g}_{-e_{3}}\oplus\mathfrak{g}_{-e_{2}-e_{3}}\oplus\mathfrak{g}_{-e_{1}-e_{3}}\oplus\mathfrak{g}_{-e_{1}-e_{2}}$ and $\delta=-e_{1}-e_{2}-e_{3}$. 

On the one hand, using Equation \ref{equationsix} with:
\begin{enumerate}
\item $p_{1}=E_{\left(e_{1}-e_{2} \right) }$, $u_{1}=E_{-e_{1}}$, $v_{1}=E_{-\left( e_{1}+e_{3}\right) }$
\item $p_{2}=E_{\left(e_{2}-e_{3} \right) }$, $u_{2}=E_{-e_{2}}$, $v_{2}=E_{-\left( e_{1}+e_{2}\right) }$
\item $p_{3}=E_{-\left(e_{1}-e_{3} \right) }$, $u_{3}=E_{-e_{3}}$, $v_{3}=E_{-\left( e_{2}+e_{3}\right) }$
\end{enumerate}
gives us:
\begin{enumerate}
\item $\langle u_{1},\operatorname{ad}_{p_{1}}v_{1}\rangle +\langle \operatorname{ad}_{p_{1}}u_{1},v_{1}\rangle=0$
\item $\langle u_{2},\operatorname{ad}_{p_{2}}v_{2}\rangle +\langle \operatorname{ad}_{p_{2}}u_{2},v_{2}\rangle=0$
\item 
$\langle u_{3},\operatorname{ad}_{p_{3}}v_{3}\rangle +\langle \operatorname{ad}_{p_{3}}u_{3},v_{3}\rangle=0$
\end{enumerate}
On the other hand we have:
 $\operatorname{ad}_{p_{1}}u_{1}=-2u_{2}$, 
 $\operatorname{ad}_{p_{1}}v_{1}=-2v_{3}$, 
 $\operatorname{ad}_{p_{2}}u_{2}=-2u_{3}$,
 $\operatorname{ad}_{p_{2}}v_{2}=-2v_{1}$,
 $\operatorname{ad}_{p_{3}}u_{3}=-2u_{1}$, and
 $\operatorname{ad}_{p_{3}}v_{3}=-2v_{2}$. This  leads to $$\langle u_{1},v_{3}\rangle=-\langle u_{2},v_{1}\rangle=\langle u_{3},v_{2}\rangle=-\langle u_{1},v_{3}\rangle $$ and hence  $\langle u_{1},v_{3}\rangle=0$ which contradicts the fact that  $\mathfrak{g}_{-e_{1}}$ is paired with $\mathfrak{g}_{-e_{2}-e_{3}}$.\\
\textbf{The $\mathfrak{so}(8,\mathbb{C})$ case.} We assume that $\mathfrak{g}$ is  $\mathfrak{so}(8,\mathbb{C})$. It is a complex simple Lie algebra of type $D_{4}$. Its standard root decomposition is described in  \cite[Pages~128]{K}. In particular the root spaces are given by, $\mathfrak{g}_{\alpha}=\mathbb{C}E_{\alpha}$. 

We assume that the sub-algebra $\mathfrak{h}$ is generated by $\mathfrak{a}$, $\mathfrak{g}_{+}$, $\mathfrak{g}_{e_{2}-e_{1}}$, $\mathfrak{g}_{e_{3}-e_{2}}$ and $\mathfrak{g}_{e_{4}-e_{3}}$ . In this case, $\mathfrak{g}/\mathfrak{h}\simeq \mathfrak{g}_{-\left( e_{1}+e_{2}\right) }\oplus \mathfrak{g}_{-\left( e_{2}+e_{4}\right) }\oplus\mathfrak{g}_{-\left( e_{1}+e_{4}\right) }\oplus\mathfrak{g}_{-\left( e_{2}+e_{3}\right) }\oplus\mathfrak{g}_{-\left( e_{1}+e_{3}\right) }\oplus\mathfrak{g}_{-\left( e_{3}+e_{4}\right) }$ and $\delta=-e_{1}-e_{2}-e_{3}-e_{4}$. 

Using Equation \ref{equationsix} with:
\begin{enumerate}
\item $p_{1}=E_{-\left(e_{2}-e_{3} \right) }$, $u_{1}=E_{-\left( e_{1}+e_{3}\right) }$, $v_{1}=E_{-\left( e_{3}+e_{4}\right) }$
\item $p_{2}=E_{-\left(e_{1}-e_{2} \right) }$, $u_{2}=E_{-\left( e_{2}+e_{3}\right) }$, $v_{2}=E_{-\left( e_{2}+e_{4}\right) }$
\item $p_{3}=E_{\left(e_{1}-e_{3} \right) }$, $u_{3}=E_{-\left( e_{1}+e_{2}\right) }$, $v_{3}=E_{-\left( e_{1}+e_{4}\right) }$
\end{enumerate}
along with the commutation relations: $\operatorname{ad}_{p_{1}}u_{1}=2u_{3}$, 
 $\operatorname{ad}_{p_{1}}v_{1}=2v_{2}$, 
 $\operatorname{ad}_{p_{2}}u_{2}=2u_{1}$,
 $\operatorname{ad}_{p_{2}}v_{2}=2v_{3}$,
 $\operatorname{ad}_{p_{3}}u_{3}=-2u_{2}$, and
 $\operatorname{ad}_{p_{3}}v_{3}=-2v_{1}$
give us $$\langle u_{1},v_{2}\rangle=-\langle u_{3},v_{1}\rangle=\langle u_{2},v_{3}\rangle=-\langle u_{1},v_{2}\rangle $$ and hence  $\langle u_{1},v_{2}\rangle=0$ which contradicts the fact that  $\mathfrak{g}_{-\left( e_{1}+e_{3}\right)}$ is paired with $\mathfrak{g}_{-\left( e_{2}+e_{4}\right)}$.

To finish, assume that the sub-algebra $\mathfrak{h}$ is generated by $\mathfrak{a}$, $\mathfrak{g}_{+}$, $\mathfrak{g}_{e_{2}-e_{1}}$, $\mathfrak{g}_{e_{3}-e_{2}}$ and $\mathfrak{g}_{-e_{3}-e_{4}}$ . In this case, $\mathfrak{g}/\mathfrak{h}\simeq \mathfrak{g}_{-\left( e_{1}+e_{2}\right) }\oplus \mathfrak{g}_{-\left( e_{1}+e_{3}\right) }\oplus\mathfrak{g}_{-\left( e_{2}+e_{3}\right) }\oplus\mathfrak{g}_{-\left( e_{1}-e_{4}\right) }\oplus\mathfrak{g}_{-\left( e_{2}-e_{4}\right) }\oplus\mathfrak{g}_{-\left( e_{3}-e_{4}\right) }$ and $\delta=-e_{1}-e_{2}-e_{3}+e_{4}$. 

Again we use Equation \ref{equationsix} with:
\begin{enumerate}
\item $p_{1}=E_{-\left(e_{2}-e_{3} \right) }$, $u_{1}=E_{-\left( e_{1}+e_{3}\right) }$, $v_{1}=E_{-\left( e_{3}-e_{4}\right) }$
\item $p_{2}=E_{-\left(e_{1}-e_{2} \right) }$, $u_{2}=E_{-\left( e_{2}+e_{3}\right) }$, $v_{2}=E_{-\left( e_{2}-e_{4}\right) }$
\item $p_{3}=E_{\left(e_{1}-e_{3} \right) }$, $u_{3}=E_{-\left( e_{1}+e_{2}\right) }$, $v_{3}=E_{-\left( e_{1}-e_{4}\right) }$
\end{enumerate}
together with the commutation relations: $\operatorname{ad}_{p_{1}}u_{1}=2u_{3}$, 
 $\operatorname{ad}_{p_{1}}v_{1}=2v_{2}$, 
 $\operatorname{ad}_{p_{2}}u_{2}=2u_{1}$,
 $\operatorname{ad}_{p_{2}}v_{2}=2v_{3}$,
 $\operatorname{ad}_{p_{3}}u_{3}=-2u_{2}$, and
 $\operatorname{ad}_{p_{3}}v_{3}=-2v_{1}$
to get $$\langle u_{1},v_{2}\rangle=-\langle u_{3},v_{1}\rangle=\langle u_{2},v_{3}\rangle=-\langle u_{1},v_{2}\rangle $$ and hence  $\langle u_{1},v_{2}\rangle=0$ which contradicts the fact that  $\mathfrak{g}_{-\left( e_{1}+e_{3}\right)}$ is paired with $\mathfrak{g}_{-\left( e_{2}-e_{4}\right)}$.

\end{proof}
Now this last Proposition together with the fact that  we already have examples of such  types (Example \ref{e1}) give us:
\begin{cor}
If $\mathfrak{a}\oplus\mathfrak{g}_{+}\subseteq \mathfrak{h}$ and $\operatorname{rank}(\mathfrak{g})\geq 3$ then $M$ is conformally flat. Moreover, $G=\SO(n+2,\C)$ and $M=\Ein_{n}(\C)$

\end{cor}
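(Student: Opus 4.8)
The plan is to read the statement off from Proposition~\ref{Propparabolic} together with the explicit description of the complex Einstein universe in Example~\ref{e1}. Conformal flatness needs no new input: the hypothesis $\mathfrak{a}\oplus\mathfrak{g}_{+}\subseteq\mathfrak{h}$ says exactly that the isotropy is a parabolic subalgebra, so $M$ is conformally flat by \cite[Theorem~1.4]{CharlesMelnick} (equivalently \cite[Proposition~3.3]{BDRZ1}), as already recorded at the opening of this section. Thus the real task is to identify the pair $(G,M)$.

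First I would apply Proposition~\ref{Propparabolic}: up to modification, $\mathfrak{g}$ is $\mathfrak{so}(m,\C)$ with $m=2n+1$ in type $B_{n}$ or $m=2n$ in type $D_{n}$ (and $n\geq 3$), the unique simple root in $\Pi\setminus\Pi'$ is $\alpha=e_{1}-e_{2}$, and $\delta=-2e_{1}$. Since $\mathfrak{h}=\mathfrak{l}\oplus\mathfrak{a}\oplus\mathfrak{g}_{+}$ with $\mathfrak{l}=\bigoplus_{\beta\in-\Delta'}\mathfrak{g}_{\beta}$ and $\Delta'=\operatorname{span}(\Pi')$, deleting precisely the first simple root $e_{1}-e_{2}$ identifies $\mathfrak{h}$ with the maximal parabolic stabilizing the isotropic line $\C e_{1}$ for the defining representation of $\mathfrak{so}(m,\C)$ on $\C^{m}$; that is, $\mathfrak{h}$ is the Lie algebra of the parabolic $P_{1}$ appearing in Example~\ref{e1}. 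Hence $G/H$, as a complex homogeneous space, is the quadric $\QQ_{m-2}(\C)=\Ein_{m-2}(\C)$. A quick sanity check confirms this: $\dim_{\C}\mathfrak{g}/\mathfrak{h}$ equals the number of positive roots with nonzero $e_{1}$-coefficient, namely $2n-1$ in type $B_{n}$ and $2n-2$ in type $D_{n}$, in both cases $m-2$; and the pairing $\mu\leftrightarrow\delta-\mu$ on the weights of $\mathfrak{g}/\mathfrak{h}$ is exactly the one producing the non-degenerate quadratic form on $\mathfrak{g}/\mathfrak{h}$.

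Next I would fix the conformal structure by uniqueness. By Example~\ref{e1}, the quadric $\QQ_{m-2}(\C)$ carries a unique $\SO(m,\C)$-invariant holomorphic conformal structure — the Einstein one (this can also be seen from Schur's lemma, since $\mathfrak{g}/\mathfrak{h}$ is an irreducible module over the reductive Levi factor of $P_{1}$). Therefore $\mathcal{G}$ must be this structure, so, writing $n:=m-2$, we get $M=\Ein_{n}(\C)$ with its standard conformal class, which is the conformally flat model and reproves flatness. It remains only to note that the original group $G$ — faithful, transitive, essential, and preserving the conformal structure of $\Ein_{n}(\C)$ — is forced to be $\PSO(n+2,\C)$: it embeds in the conformal group $\PSO(n+2,\C)$, while the modified subalgebra $\mathfrak{so}(n+2,\C)\subseteq\mathfrak{g}$ already acts transitively on $M$, leaving no room for a larger factor. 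This gives $G=\SO(n+2,\C)$ and $M=\Ein_{n}(\C)$ as claimed.

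The substantive work has already been done in Propositions~\ref{poupoup} and~\ref{Propparabolic} and in Corollary~\ref{cococoor} (maximality); for the present corollary the only point that deserves a line of care is the identification of the abstract maximal parabolic $\mathfrak{p}_{e_{1}-e_{2}}$ with the concrete isotropy $P_{1}$ of the quadric — equivalently, checking that the weight spaces comprising $\mathfrak{g}/\mathfrak{h}$, together with their pairing, reconstitute the conformal tangent space of $\QQ_{m-2}(\C)$ — and the invocation of uniqueness of the invariant conformal structure.
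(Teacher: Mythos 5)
Your proposal is correct and follows essentially the same route as the paper, which disposes of this corollary in a single sentence by combining Proposition~\ref{Propparabolic} with Example~\ref{e1} (conformal flatness having already been recorded at the start of Section~\ref{Parabolic_Case} via \cite[Theorem 1.4]{CharlesMelnick}). You simply make explicit the steps the paper leaves implicit --- identifying the maximal parabolic obtained by deleting $e_{1}-e_{2}$ with the stabilizer $P_{1}$ of an isotropic line, the dimension count, and the uniqueness of the invariant conformal structure on the quadric --- which is a faithful and somewhat more careful rendering of the intended argument.
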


\subsection{Classification theorem: lower rank parabolic case}
In this part we need to deal with the parabolic case where after modification the Lie algebra $\mathfrak{g}$ is of $\operatorname{rank}(\mathfrak{g})\leq 2$. 

If $\operatorname{rank}(\mathfrak{g})=1$ then $M$ is conformally equivalent to $\mathbb{CP}^{1}$. If  $\mathfrak{g}$ is of type $A_{1}\times A_{1}$ then, up to finite cover, $G$ is $\operatorname{SL}(2,\mathbb{C})\times \operatorname{SL}(2,\mathbb{C})$ and $H=P_{1}\times P_{2}$ where $P_{1}, P_{2}$ are borel sub-groups of $G$. Hence $M$ is 
 conformally equivalent to $\mathbb{CP}^{1}\times \mathbb{CP}^{1}$. 
 
Now we are left with $A_{2}$, $B_{2}$ or $G_{2}$ types. We have:
\begin{prop}
\label{lastpro}
The Lie algebra $\mathfrak{g}$ is of type:
\begin{enumerate}

\item $B_{2}$ with $\alpha=e_{1}-e_{2}$ and $\delta=-2e_{1}$ or;
\item $G_{2}$ with $\alpha=e_{1}-e_{2}$ and $\delta=-2(e_{3}-e_{2})$.

\end{enumerate}
\end{prop}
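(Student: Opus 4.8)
The plan is to run, for each of the three remaining types $A_{2}$, $B_{2}$, $G_{2}$, the same kind of analysis already performed in the higher-rank parabolic case (Propositions \ref{poupoup} and \ref{Propparabolic}). Since $\mathfrak{h}$ is parabolic and, by Corollary \ref{cococoor}, maximal, there is a unique simple root $\alpha\in\Pi\setminus\Pi'$; and by Proposition \ref{propprop} the negative root $\delta+\alpha$ is the minimal root of $\Delta$. So for each type I would simply enumerate the candidates for $\alpha$ among the simple roots, set $\delta:=(\text{minimal root})-\alpha$, read off $\Pi'$, the Levi piece $\mathfrak{l}$ and the decomposition $\mathfrak{g}/\mathfrak{h}=\bigoplus_{\beta>0,\ \mathfrak{g}_{-\beta}\not\subset\mathfrak{h}}\mathfrak{g}_{-\beta}$, and rule out every choice except the two asserted ones.

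Two mechanisms do the ruling-out, exactly as before. The first is a non-degeneracy/pairing count: $\langle\cdot,\cdot\rangle$ is non-degenerate on $\mathfrak{g}/\mathfrak{h}$, and two weight lines pair only if their weights add up to $\delta$; hence every $\mathfrak{g}_{-\beta}$ occurring in $\mathfrak{g}/\mathfrak{h}$ must have a partner $\mathfrak{g}_{\delta+\beta}$ with $\delta+\beta$ a root and $\mathfrak{g}_{\delta+\beta}\not\subset\mathfrak{h}$ (the partner may be $\mathfrak{g}_{-\beta}$ itself, which forces $\delta=-2\beta$). The second is the \emph{isometric nilpotent} relation: as noted in the proof of Proposition 2.4, $\delta$ vanishes on $(\mathfrak{p}\cap\mathfrak{g}_{-})\oplus\mathfrak{g}_{+}$, so Equation \ref{equationsix} is available with $p$ any root vector of $\mathfrak{l}\oplus\mathfrak{g}_{+}$, and feeding well-chosen $u,v\in\mathfrak{g}/\mathfrak{h}$ into it produces linear relations among the pairings $\langle u,v\rangle$. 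These are precisely the tools that killed the $B_{3}$ and $D_{4}$ cases in Proposition \ref{Propparabolic}.

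Carrying this out: for $A_{2}$ the two simple roots are symmetric, so up to relabeling $\alpha=e_{1}-e_{2}=\alpha_{1}$ and $\delta=-2\alpha_{1}-\alpha_{2}$; then $\mathfrak{g}/\mathfrak{h}=\mathfrak{g}_{-\alpha_{1}}\oplus\mathfrak{g}_{-(\alpha_{1}+\alpha_{2})}$, and since $\delta\neq-2\alpha_{1}$ these two lines must be paired, so $\langle E_{-\alpha_{1}},E_{-(\alpha_{1}+\alpha_{2})}\rangle\neq0$. On the other hand $E_{-\alpha_{2}}\in\mathfrak{l}$ acts isometrically, so applying Equation \ref{equationsix} with $p=E_{-\alpha_{2}}$ and $u=v=E_{-\alpha_{1}}$ gives $2\langle[E_{-\alpha_{2}},E_{-\alpha_{1}}],E_{-\alpha_{1}}\rangle=0$, hence $\langle E_{-(\alpha_{1}+\alpha_{2})},E_{-\alpha_{1}}\rangle=0$ (using $[\mathfrak{g}_{-\alpha_{2}},\mathfrak{g}_{-\alpha_{1}}]=\mathfrak{g}_{-(\alpha_{1}+\alpha_{2})}$), a contradiction; so $A_{2}$ is impossible. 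For $B_{2}$ the simple roots are $e_{1}-e_{2}$ and $e_{2}$: the choice $\alpha=e_{2}$ leaves $\mathfrak{g}_{-e_{1}}$ inside $\mathfrak{g}/\mathfrak{h}$ with no admissible partner (neither $\delta+e_{1}$ nor $\delta-e_{1}$ is a root, and $\delta\neq-2e_{1}$), which is excluded; and $\alpha=e_{1}-e_{2}$ gives exactly $\delta=-2e_{1}$. For $G_{2}$, in the realization with $\alpha_{1}=e_{1}-e_{2}$ short and $\alpha_{2}=-2e_{1}+e_{2}+e_{3}$ long (minimal root $e_{1}+e_{2}-2e_{3}$), the long choice $\alpha=\alpha_{2}$ gives $\delta=-3\alpha_{1}-3\alpha_{2}$, for which $\mathfrak{g}_{-(\alpha_{1}+\alpha_{2})}$ has no partner ($2(\alpha_{1}+\alpha_{2})$ is not a root and $\delta\neq-2(\alpha_{1}+\alpha_{2})$), so it is excluded; the short choice $\alpha=\alpha_{1}=e_{1}-e_{2}$ gives $\delta=-4\alpha_{1}-2\alpha_{2}=-2(e_{3}-e_{2})$. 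This leaves precisely the two stated cases (which, reassuringly, each carry the pairing pattern of a conformal quadric — a sum of hyperbolic planes plus one self-paired line — matching $\Eins_{3}(\C)$ and $\Eins_{5}(\C)$, to be identified in the sequel).

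I expect the $G_{2}$ bookkeeping to be the main obstacle: one must fix a concrete model of the root system with its two lengths, list the six positive roots, locate the minimal root, and for each of the two simple roots compute $\delta$, $\Pi'$, $\mathfrak{l}$, and the full $\langle\cdot,\cdot\rangle$-pairing graph on $\mathfrak{g}/\mathfrak{h}$, keeping careful track of which lines can be self-paired. The only subtle point for $A_{2}$ is verifying $\delta(E_{-\alpha_{2}})=0$, which follows since $\delta$ is a character of $\mathfrak{p}$ and $E_{-\alpha_{2}}\in[\mathfrak{a},\mathfrak{g}_{-\alpha_{2}}]$; everything else is a finite root-theoretic check.
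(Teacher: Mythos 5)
Your proposal is correct and follows essentially the same route as the paper: $A_{2}$ is excluded by combining the isometric action of the Levi root vector $E_{-\alpha_{2}}$ (Equation \ref{equationsix} with $u=v=E_{-\alpha_{1}}$) with the forced pairing of $\mathfrak{g}_{-\alpha_{1}}$ and $\mathfrak{g}_{-(\alpha_{1}+\alpha_{2})}$, and the wrong simple-root choices in $B_{2}$ and $G_{2}$ are eliminated by exhibiting a root space outside $\mathfrak{h}$ whose would-be partner $\delta+\beta$ is not a root, exactly as in the paper. The only cosmetic difference is your choice of witness root in the $G_{2}$ long-root case ($\alpha_{1}+\alpha_{2}$ versus the paper's $2\alpha_{1}+\alpha_{2}$), which is immaterial.
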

\begin{proof}

Assume first that $\mathfrak{g}$ is of type $A_{2}$. In this case $\delta+\alpha=-\left(e_{1}-e_{3}\right) $ and without loss of generality we can suppose that $\alpha=e_{1}-e_{2}$. As $\mathfrak{g}_{-\left( e_{2}-e_{3}\right) }$ acts isometrically,  we use Equation \ref{equationsix} with $0\neq p\in \mathfrak{g}_{-\left( e_{2}-e_{3}\right) }$, $0\neq u=v\in \mathfrak{g}_{-\left( e_{1}-e_{2}\right)}$ to get $\langle\left[p,u \right], u \rangle=0$. But this contradicts the fact that  $\mathfrak{g}_{-\left( e_{1}-e_{2}\right) }$ is paired with $\mathfrak{g}_{-\left( e_{1}-e_{3}\right) }$. 

In the case where $\mathfrak{g}$ is of type $B_{2}$, $\delta+\alpha=-\left( e_{1}+e_{2}\right) $. If $\alpha=e_{2}$ then $\delta=-e_{1}-2e_{2}$. But $\mathfrak{g}_{e_{1}}\nsubseteq \mathfrak{h}$ thus $\delta+e_{1}=-2e_{2}$ is a negative root which is a clearly false. Thus $\alpha=e_{1}-e_{2}$ and $\delta=-2e_{1}$.

Finally, if $\mathfrak{g}$ is of type $G_{2}$ then $\delta+\alpha
=2e_{3}-e_{1}-e_{2}$. Assume that $\alpha=-2e_{1}+e_{2}+e_{3}$, thus  $\delta=-3\left( e_{3}-e_{1}\right) $. As $\mathfrak{g}_{-\left( e_{3}-e_{2}\right)}\nsubseteq \mathfrak{h}$, we have $\delta+\left( e_{3}-e_{2}\right)=3e_{1}-e_{2}-2e_{3}$ is a negative root which is not true.

\end{proof}
\textbf{End of proof of Theorem \ref{theo1}}.
Assume first that $\mathfrak{g}$ is of type $B_{2}$. Then by Proposition \ref{lastpro}, $\alpha=e_{1}-e_{2}$ and $\delta=-2e_{1}$. As we already have an example of such situation we get that $G=\SO(5,\C)$ and $M=\Ein_{3}(\C)$.

To finish we assume that $\mathfrak{g}$ is of type $G_{2}$ with $\alpha=e_{1}-e_{2}$ and $\delta=-2(e_{3}-e_{2})$. In this case 
 the sub-algebra $\mathfrak{h}$ is generated by $\mathfrak{a}$, $\mathfrak{g}_{+}$, and $\mathfrak{g}_{-\left(-2e_{1}+ e_{2}+e_{3}\right)}$ so that $\mathfrak{g}/\mathfrak{h}\simeq \mathfrak{g}_{-\left( e_{1}-e_{2}\right) }\oplus \mathfrak{g}_{-\left( e_{3}-e_{1}\right) }\oplus\mathfrak{g}_{-\left( e_{3}-e_{2}\right) }\oplus\mathfrak{g}_{-\left( -2e_{2}+e_{1}+e_{3}\right) }\oplus\mathfrak{g}_{-\left( 2e_{3}-e_{1}-e_{2}\right)}$. Recall that the root space decomposition of $\mathfrak{g}$ is given by $\mathfrak{g}_{\alpha}=\mathbb{C}E_{\alpha}$ with in particular the following commutation relations, among others:
\begin{enumerate} 
\item $\left[ E_{-\left( -2e_{1}+e_{2}+e_{3}\right) }, E_{-\left( e_{1}-e_{2}\right) }\right]=-E_{-\left( e_{3}-e_{1}\right) } $;
\item $\left[ E_{-\left( -2e_{1}+e_{2}+e_{3}\right) }, E_{-\left( -2e_{2}+e_{1}+e_{3}\right) }\right]=-E_{-\left( 2e_{3}-e_{1}-e_{2}\right) } $;
\item  $\left[ E_{e_{3}-e_{1}}, E_{-\left( e_{3}-e_{2}\right)}\right]=-2E_{-\left( e_{1}-e_{2}\right) } $;
\item $\left[ E_{e_{3}-e_{1}}, E_{-\left( 2e_{3}-e_{1}-e_{2}\right)}\right]=E_{-\left( e_{3}-e_{2}\right) } $;
\item  $\left[ E_{e_{1}-e_{2}}, E_{-\left( e_{3}-e_{2}\right)}\right]=-2E_{-\left( e_{3}-e_{1}\right) } $;
\item $\left[ E_{e_{1}-e_{2}}, E_{-\left( -2e_{2}+e_{1}+e_{3}\right)}\right]=-E_{-\left( e_{3}-e_{2}\right) } $.
\end{enumerate}

On the one hand, $M$ is identified, as a homogeneous space, to the complex Einstein space.

On the other hand, let $\langle.,.\rangle$ be the complex
bilinear form defined on $\mathfrak{g}/\mathfrak{h}$ by: 
\begin{enumerate}
\item $\mathfrak{g}_{-\left( e_{1}-e_{2}\right) }$ is paired with $\mathfrak{g}_{-\left( 2e_{3}-e_{1}-e_{2}\right)}$, $\mathfrak{g}_{-\left( e_{3}-e_{1}\right) }$ with $\mathfrak{g}_{-\left( -2e_{2}+e_{1}+e_{2}\right)}$, and $\mathfrak{g}_{-\left( e_{3}-e_{2}\right) }$ with itself;
\item $\langle E_{-\left( e_{1}-e_{2}\right) }, E_{-\left( 2e_{3}-e_{1}-e_{2}\right)}\rangle=1$;
\item $\langle E_{-\left( e_{3}-e_{1}\right) }, E_{-\left( -2e_{2}+e_{1}+e_{2}\right)}\rangle=-1$;
\item $\langle E_{-\left( e_{3}-e_{2}\right) }, E_{-\left( e_{3}-e_{2}\right)}\rangle=2$;
\end{enumerate}
Then it is worth nothing to verify that the conformal class of $\langle.,.\rangle$ is uniquely  preserved by $\mathfrak{h}$. Thus $M$ admits a unique conformal holomorphic Riemannian structure invariant under the action of the simple Lie group $G_{2}$. In addition, this conformal structure is flat. Hence $M$ is the Einstein space and $G_{2}$ admits a representation in $\operatorname{SO}(7,\mathbb{C})$ (See also \cite{Agricola}).



\bibliographystyle{abbrv}
 \bibliography{Bibliography}

\end{document}